\numberwithin{equation}{section}
\newtheorem{thm}{Theorem}[section]
\newtheorem{fact}[thm]{Fact}
\newtheorem{prop}[thm]{Proposition}
\newtheorem{lem}[thm]{Lemma}
\newtheorem{defn}[thm]{Definition}
\newtheorem{cor}[thm]{Corollary}
\newtheorem{conj}[thm]{Conjecture}
\newtheorem{ex}[thm]{Example}
\newtheorem{obs}[thm]{Observation}
\newcommand{\sinv}{\piinv_\text{b}}
\newcommand{\s}{\pi_\text{b}}
\newcommand{\p}{\mathcal{P}}
\newcommand{\Z}{\mathbb{Z}}
\newcommand{\ra}{\rightarrow}
\newcommand{\wt}{\tilde}
\newcommand{\B}{\mathcal{B}}
\newcommand{\Balg}{\mathscr{B}}
\newcommand{\F}{\mathscr{F}}
\newcommand{\C}{\mathscr{C}}
\newcommand{\A}{\mathcal{A}}
\newcommand{\piinv}{\pi^{-1}}
\newcommand{\E}{\mathbb{E}}
\newcommand{\pGcom}{\partial G^c}
\newcommand {\DS} [1] {${\displaystyle #1}$}
\begin{document}
\date{}
\title{Class Degree and Relative Maximal Entropy}
\author{Mahsa Allahbakhshi and Anthony Quas\thanks{
We thank the referee for detailed and helpful comments.}}
%

\maketitle
\begin{abstract}
Given a factor code $\pi$ from a one-dimensional shift of finite type $X$ onto an irreducible sofic shift $Y$, if $\pi$ is finite-to-one there is an invariant called the degree of $\pi$ which is defined the number of preimages of a typical point in $Y$. We generalize the notion of the degree to the class degree which is defined for any factor code on a one-dimensional shift of finite type. Given an ergodic measure $\nu$ on $Y$, we find an invariant upper bound on the number of ergodic measures on $X$ which project to $\nu$ and have maximal entropy among all measures in the fibre $\pi^{-1}\{\nu\}$. We show  that this bound and the class degree of the code agree when $\nu$ is ergodic and fully supported. One of the main ingredients of the proof is a uniform distribution property for ergodic measures of relative maximal entropy.
\end{abstract} 
\section{Introduction}
It is a well-known result that a 1-dimensional irreducible shift of finite type on a finite alphabet has a unique measure of
maximal entropy, the measure developed by Shannon and Parry \cite{parry,shannon}. In contrast, we consider the relative case in which one is given a factor code $\pi: X\rightarrow Y$ from a shift of finite type $X$ to a sofic shift $Y$, and a measure $\nu$ on $Y$. In this case ergodic measures on $X$ in the fibre $\pi^{-1}\{\nu\}$ having maximal entropy in the fibre, so-called measures of relative maximal entropy, are not well understood.

Measures of relative maximal entropy appear frequently in different topics in symbolic dynamics, and have been applied to the problem of computing the Hausdorff dimension of certain sets by Gatzouras and Peres~\cite{GatPerexp}. There are connections of measures of relative maximal entropy with functions of Markov chains \cite{black,BoyTun,BurRos,MarPetWill}, measures that maximize a weighted entropy functional \cite{GatPer,shin}, 
the theory of pressure and equilibrium states \cite{isr,kel,ruelle}, relative pressure and relative equilibrium states \cite{LedWal,LedYo,wal}, and compensation functions \cite{BoyTun,wal}. Other uses of such measures arise from their application in the mathematics of information transfer \cite{karlinfo} and information-compressing channels \cite{MarPetWill}.

Unlike in the case considered by Shannon and Parry where the measure of maximal entropy is unique, it is known that measures of relative maximal entropy may not be 
unique~\cite{karlinfo,pqs}. It is natural to ask how many of such measures there may be for a particular factor code $\pi:X\to Y$ and an ergodic invariant measure on Y. A previous result on this problem is the result of Petersen, Quas, and Shin \cite{pqs} which established an explicit finite upper bound on the number of measures of relative maximal entropy. They considered the case (always achievable by recoding) that $\pi$ is a 1-block code from a 1-step shift of finite type. In this case, the main result of~\cite{pqs} states that the number of measures of relative maximal entropy is bounded above by the number of preimages of any symbol $w$ of $Y$ which has positive probability with respect to the measure on $Y$.

The bound in~\cite{pqs} suffers from not being invariant under conjugacy and becomes arbitrarily large simply by recoding to a higher block presentation. To avoid this issue one possibility is to take the minimum of this bound over all shifts of finite type $\wt X$ which are conjugate to
$X$. However, no algorithm for computing this minimum has been found. On the other hand, in this paper we find a simpler conjugacy-invariant upper bound which can be strictly better and is never worse than the minimum-over-conjugates bound mentioned above.

Given a finite-to-one factor code $\pi$ from a shift of finite type $X$ to a sofic shift $Y$, the \textsf{degree} is defined to be the minimal cardinality of the set of preimages of a point in $Y$. This is widely-studied and known to be invariant under recoding~\cite{lm}. One can show that the number of preimages of every transitive point in $Y$ is exactly the degree of $\pi$. Previous efforts to find an analogue of this when $\pi$ is infinite-to-one have proved unsuccessful~\cite{boyle05}.

Given $y$ in $Y$, we define an equivalence relation on $X$ which is respected by recoding. The relation is motivated by communicating classes in Markov chains. Loosely, two points $x$ and $x'$ in $\piinv(y)$ lie in the same equivalence class ``transition class'' if one can find an element of $\piinv(y)$ matching either of $x$ and $x'$ from $-\infty$ to an arbitrarily 
large positive coordinate $n$ and matching the other at 
$+\infty$. The minimal number of transition classes (always finite) in $\piinv(y)$ when $y$ runs over the points in $Y$ is called the \textsf{class degree} of the code. We show that the number of transition classes over every transitive point of $Y$ is exactly the class degree. If $\pi$ is finite-to-one the degree and the class degree of $\pi$ agree.

We prove an analogue of Theorem 1 in~\cite{pqs}: given an ergodic measure $\nu$ on $Y$, for distinct ergodic measures $\mu_1$ and $\mu_2$ of relative maximal entropy, the relative independent joining of $\mu_1$ and $\mu_2$ assigns measure zero to the set of points $(u,v)$ such that $u$,\,$v$ are in a common transition class.

Using a combinatorial characterization of class degree and further arguments, we show that the number of transition classes above a typical point of $\nu$ is an upper bound on the number of ergodic measures of maximal entropy in the fibre $\piinv\{\nu\}$. This bound is equal to the class degree of the code when $\nu$ is ergodic and fully supported.

One key ingredient to this work is a relative version of the uniform conditional distribution property for measures of maximal entropy. We follow techniques developed by Burton and Steif~\cite{bs} (which were related to an earlier work of Lanford and Ruelle~\cite{LanfRuel69}) to show a uniform conditional distribution property for measures of relative maximal entropy.

\section{Background}
Throughout the paper, $X$ will denote a one-dimensional shift of finite type on a finite alphabet (SFT) and the shift map will be denoted by $T_X$, or simply by $T$ when there is no ambiguity. We will deal with a factor code $\pi$ from $X$ to a sofic shift $Y$. A detailed description of these notions is given in~\cite{lm}. A triple $(X,Y,\pi)$ is called a \textsf{factor triple} when $\pi:X\ra Y$ is a factor code from a SFT $X$ to a sofic shift $Y$. The alphabet of a shift space $X$ is denoted by $\A(X)$ and the $\sigma$-algebra on $X$ generated by cylinder sets of $X$ is denoted by $\Balg_X$. The set of all $n$-blocks that occur in points of $X$ is denoted by $\B_n(X)$, and the language of $X$, $\mathscr L(X)$, is the collection $\bigcup_{n=0}^{\infty}\B_n(X)$. Let $x$ be in $X$ and $G\subseteq\Z$. The configuration which occurs in $x$ on $G$ is denoted by $x_G$. If $G=[i,j]$ is a $\Z$-interval we sometimes denote $x_G$ by $x_{[i,j]}$. We say two factor triples $(X,Y,\pi)$ and $(\wt X,\wt Y,\wt\pi)$ are \textsf{conjugate}, and denote it by $(X,Y,\pi)\cong(\wt X,\wt Y,\wt\pi)$, if $X$ is conjugate to $\wt X$ under a conjugacy $\phi$, $Y$ is conjugate to $\wt Y$ under a conjugacy $\psi$, and $\wt\pi\circ\phi=\psi\circ\pi$. Let $(X,Y,\pi)$ be a factor triple where $\pi$ is a 1-block factor code induced by the map $\s:\A(X)\to\A(Y)$ (b stands for block). The map $\s$ naturally extends to elements of $\mathscr L(X)$. Above every $Y$-block $W$ of length $n$
there is a set of $X$-blocks $W'$ of length $n$ which are sent to $W$ by
$\s$; i.e., $\s(W')=W$. Given $0\leq i \leq n-1$, define
$$d(W,i)=|\{a\in\A(X)\colon \exists W' \text{ with }\s(W')=W,\,
W'_i=a\}|,$$ and let $$d^*_{\pi}=\min\{d(W,i):W\in\mathscr L(Y),\, 0\leq
i\leq |W|-1\}.$$ A \textsf{magic block} is a block $W$ such that
$d(W,i)=d^*_{\pi}$  for some $0\leq i\leq |W|-1$. Such an index $i$ is
called a \textsf{magic coordinate} of $W$. A factor code $\pi$ has a
\textsf{magic symbol} if there is a magic block of $\pi$ of length $1$.
\begin{thm}~\cite[Theorem 9.1.7]{lm}\label{thm:magicsymbol}
 Let $(X,Y,\pi)$ be a factor triple. There is a factor
triple $(\wt{X},\wt{Y},\wt\pi)$ conjugate to
$(X,Y,\pi)$ such that $\tilde\pi$ is a 1-block code with a magic symbol.
\end{thm}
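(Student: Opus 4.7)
The plan is to produce $(\wt X,\wt Y,\wt\pi)$ in two stages. First I would reduce to the case that $\pi$ is a 1-block code by passing to a sufficiently high block presentation of $X$ and carrying $Y$ along: if $\pi$ has window $[-N,N]$, the $(2N{+}1)$-block presentation makes the induced code a 1-block code between conjugate shift spaces, preserving the SFT/sofic structure. This is a standard higher-block move.

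Once $\pi$ is 1-block, I would pick a magic block $W=W_0\cdots W_{n-1}\in\B_n(Y)$ with magic coordinate $i$, so $d(W,i)=d^*_\pi$, and let $S=\{a_1,\ldots,a_{d^*_\pi}\}\subseteq\A(X)$ be the symbols that can appear at position $i$ in some $\s$-preimage of $W$. Introduce a new $Y$-symbol $\omega$ and new $X$-symbols $a_1^\ast,\ldots,a_{d^*_\pi}^\ast$, and define sliding-block maps $\psi:Y\to\wt Y$ and $\phi:X\to\wt X$ that mark precisely those occurrences of $W_i$ and $a_k$ sitting at position $i$ of a $W$-pattern: set $\psi(y)_j=\omega$ when $y_{[j-i,\,j-i+n-1]}=W$, and $\phi(x)_j=a_k^\ast$ when $x_j=a_k$ and $\s(x_{[j-i,\,j-i+n-1]})=W$, with all other symbols unchanged. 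Both $\phi,\psi$ have $n$-block windows and 1-block inverses (delete the star), so they are conjugacies; thus $\wt X$ remains an SFT and $\wt Y$ remains sofic. The induced factor code $\wt\pi:\wt X\to\wt Y$ is a 1-block code whose symbol map $\wt\s$ is given by $a\mapsto\s(a)$ for $a\in\A(X)$ and $a_k^\ast\mapsto\omega$; in particular $\wt\pi\circ\phi=\psi\circ\pi$ is a routine check split into the cases $\s(x_{[j-i,\,j-i+n-1]})=W$ or not.

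The core step is to verify that $\omega$ is a magic symbol for $\wt\pi$. By construction $\wt\s^{-1}(\omega)=\{a_1^\ast,\ldots,a_{d^*_\pi}^\ast\}$, so $d_{\wt\pi}(\omega,0)=d^*_\pi$ and $d^*_{\wt\pi}\le d^*_\pi$. For the reverse bound, fix any $\wt Y$-block $V$ of length $m$ and position $j$. Since $\wt\s^{-1}(\omega)$ and $\A(X)$ are disjoint, whether $\wt x_j$ is starred is forced by whether $V_j=\omega$, hence the map $\wt x_j\mapsto x_j:=\phi^{-1}(\wt x_j)$ is injective on the allowed symbols at position $j$ of preimages of $V$; consequently $d_{\wt\pi}(V,j)=|\{x_j : x\in X,\ \psi(\pi(x))_{[0,m-1]}=V\}|$. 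The constraint $\psi(\pi(x))_{[0,m-1]}=V$ depends on $\pi(x)$ only through the length-$(m+n-1)$ window $\pi(x)_{[-i,\,m+n-2-i]}$; let $\mathcal{Y}_V\subseteq\B_{m+n-1}(Y)$ be the (non-empty, since $V$ is legal) set of admissible values for this window. For each $Z\in\mathcal{Y}_V$, the $x_j$'s arising from $\s$-preimages of $Z$ are precisely the position-$(j{+}i)$ symbols of such preimages, a set of size $d_\pi(Z,j+i)\ge d^*_\pi$; since this set is contained in the set of valid $x_j$'s, we conclude $d_{\wt\pi}(V,j)\ge d^*_\pi$. Hence $d^*_{\wt\pi}=d^*_\pi=d_{\wt\pi}(\omega,0)$, so $\omega$ is magic.

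The main obstacle is accounting for the mismatch between the $n$-block windows of $\phi,\psi$ and the 1-block windows of their inverses: the pull-back of the constraint $\wt\pi(\wt x)_{[0,m-1]}=V$ reaches $n-1$ symbols beyond the nominal window, so one must keep careful track of which $Y$-window $Z$ is being constrained when translating the $\wt\pi$-side count $d_{\wt\pi}(V,j)$ into a $\pi$-side count. Once this bookkeeping is in place the lower bound $d^*_\pi$ for $d_{\wt\pi}(V,j)$ is immediate from the minimality defining $d^*_\pi$.
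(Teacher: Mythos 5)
Your construction is correct, so let me first point out that the paper itself offers no proof of this statement: it is quoted verbatim from Lind--Marcus, so the only comparison available is with the textbook argument. That argument recodes more heavily than you do: given the magic block $W$ of length $n$ with magic coordinate $i$, it replaces $Y$ by the full higher-block presentation $Y^{[n]}$ and replaces the alphabet of $X$ by pairs consisting of a symbol together with the ambient $n$-window of its image, so that the magic word literally becomes a symbol whose preimage alphabet has exactly $d(W,i)=d^*_\pi$ elements. Your recoding is a lighter, purely local marking --- one new $Y$-symbol $\omega$ at the positions where the magic coordinate of $W$ sits, and starred copies of the $d^*_\pi$ symbols of $S$ underneath --- and it buys a smaller alphabet at the cost of a slightly more delicate verification that $d^*$ does not drop. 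You handle the one genuinely delicate point correctly: the constraint $\psi(\pi(x))_{[0,m-1]}=V$ pins down the underlying $Y$-window of length $m+n-1$ only up to membership in the set $\mathcal{Y}_V$ (because unmarked positions impose \emph{negative} conditions ``no occurrence of $W$ here''), so the relevant symbol set is a union over $Z\in\mathcal{Y}_V$ of sets of size $d_\pi(Z,j+i)\ge d^*_\pi$, and any one nonempty piece already gives the bound. Two small touch-ups: the sliding-block inverse of $\psi$ is not ``delete the star'' but the $1$-block map $\omega\mapsto W_i$ (identity elsewhere); and the equality $d_{\wt\pi}(\omega,0)=d^*_\pi$ deserves one line --- it follows either by noting that each $a_k\in S$ occurs at coordinate $i$ of a block preimage of $W$ which extends to a point of the SFT $X$, so all $d^*_\pi$ starred symbols genuinely occur in $\wt X$, or simply by combining the trivial upper bound $|\{a_1^\ast,\dots,a_{d^*_\pi}^\ast\}|$ with the general lower bound you prove. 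With those remarks your proof is complete and consistent with the statement as used in the paper.
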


\section {Uniform Conditional  Distribution}\label{sec:unif}
By a well-known result of Parry \cite{parry}, generalizing an earlier result of Shannon \cite{shannon}, in one dimension every irreducible shift of finite type on a finite
alphabet has a unique measure of maximal entropy. Burton and Steif
\cite{bs} give a counterexample to this statement in higher dimensions. However, they show that such measures all have the uniform conditional distribution property stated in Theorem~\ref{BuSt} (which is related to an earlier work of Lanford and Ruelle~\cite{LanfRuel69}). Given a finite set $G\subseteq\mathbb{Z}^d$, the boundary of the complement of $G$ is $\partial G^{c}=\{i\in G^{c}\colon \exists j\in G \text{ with } \|i-j\|=1\}$.
\begin{thm}\cite[Proposition 1.19]{bs}\label{BuSt}
Let $X$ be a 1-step d-dimensional SFT and $\mu$ be a measure of maximal entropy on $X$. Then
the conditional distribution of $\mu$ on any finite set $G\subseteq\Z^d$ given
the configuration on $G^c$ is $\mu$-a.s. uniform over all configurations
on $G$ which extend the configuration on $\pGcom$.
\end{thm}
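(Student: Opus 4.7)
I would argue by contradiction in the Lanford--Ruelle spirit: assume $\mu$ is a measure of maximal entropy but that on some finite $G\subseteq\Z^d$ the conditional distribution of $X_G$ given $X_{G^c}$ is not $\mu$-a.s.\ uniform over the admissible extensions of $X_{\partial G^c}$, and construct a shift-invariant measure $\nu$ on $X$ with $h(\nu)>h(\mu)$, contradicting maximality. The only input from the $1$-step hypothesis is that the set of admissible extensions of a boundary configuration $\omega\in\A(X)^{\partial G^c}$ depends only on $\omega$; write $N(\omega)$ for its cardinality.

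First I would set up a quantitative entropy gap. Disintegrating over $X_{G^c}$ and applying the elementary bound $H(p)\le\log|\operatorname{supp} p|$, with equality iff $p$ is uniform, gives
$$H_\mu(X_G\mid X_{G^c})\le\E_\mu[\log N(X_{\partial G^c})],$$
and under the failure hypothesis the integrated gap is some $\delta>0$. Next I would build a competitor: pick $L$ larger than the diameter of $G\cup\partial G^c$, so that the translates $G_k:=G+Lk$ and their boundaries $\partial G_k^c$ are pairwise disjoint; sample $x$ from $\mu$ and, independently on each $G_k$, replace $x_{G_k}$ by a uniformly chosen admissible configuration consistent with $x_{\partial G_k^c}$, producing a measure $\tilde\mu$. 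The $\Z^d$-invariance of $\mu$ and the symmetry of the construction make $\tilde\mu$ invariant under the sublattice $L\Z^d$, and averaging over coset representatives gives the fully invariant measure $\nu:=L^{-d}\sum_{t\in[0,L)^d}T^t_*\tilde\mu$.

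Finally I would compare the per-site entropies. The crucial identity is that $\tilde\mu$ and $\mu$ share their marginal off $\bigcup_k G_k$, while $\tilde\mu$ realizes the uniform conditional distribution on each $G_k$; a finite-volume computation on $B_n$, or equivalently on a large torus where boundary effects vanish, exploits that the $\tilde\mu$-shuffles on distinct $G_k$ are conditionally independent to write $H_{\tilde\mu}(X_{B_n})-H_\mu(X_{B_n})$ as a sum of single-cell contributions; each contributes on average at least $\delta$, yielding $h(\nu)\ge h(\mu)+\delta/L^d$ and the contradiction. The main obstacle lies exactly in this last step: the chain-rule expansion of $H_\mu(X_{B_n})$ naturally bounds each $G_k$-term by $H_\mu(X_{G_k}\mid X_{\partial G_k^c})$, and one has to show that this partial-information conditional entropy is on average strictly below $\log N(X_{\partial G_k^c})$ by an amount propagating the full gap $\delta$. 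I would handle this by working on an $M$-torus with $L\mid M$, using the exact growth $M^{-d}\log|\B_{(\Z/M\Z)^d}(X)|\to h_\text{top}$ together with Shannon--McMillan to convert the entropy comparison into a direct counting estimate, ensuring the gap $\delta$ survives intact per cell.
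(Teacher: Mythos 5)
Your overall strategy---resample uniformly on sparse translates of $G$, average to restore shift-invariance, and compare entropies---is exactly the Burton--Steif/Lanford--Ruelle argument; note that the paper only cites Theorem~\ref{BuSt} and does not reprove it, but it runs precisely this scheme for the relative analogue, Theorem~\ref{CUniformDist}. Your bookkeeping is fine up to the point you yourself flag, but there the gap is genuine and your proposed fix does not close it. The hypothesis you negate is non-uniformity of the conditional law given $X_{G^c}$, i.e.\ $H_\mu(X_G\mid X_{G^c})\le\E_\mu[\log N(X_{\partial G^c})]-\delta$, whereas the chain-rule upper bound on the $\mu$-side of your finite-volume comparison only ever involves $H_\mu(X_{G_k}\mid X_{F})$ for a \emph{finite} conditioning set $F\supseteq\partial G_k^c$. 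A priori the law of $X_G$ can be exactly uniform given the boundary ring (so that $H_\mu(X_{G_k}\mid X_{\partial G_k^c})=\E_\mu[\log N]$) while still being correlated with distant sites, so $\delta$ simply never enters your cell-by-cell estimate. The torus/Shannon--McMillan idea does not address this---the difficulty is about which $\sigma$-algebra one conditions on, not about counting configurations---and in $d\ge 2$ the assertion $M^{-d}\log|\B_{(\Z/M\Z)^d}(X)|\to h_{\mathrm{top}}$ is in any case false for general SFTs, which may have few or no periodic points.

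The missing ingredient is a finite-window reduction followed by a martingale limit, which is exactly how the paper handles the same point. Fix a finite window $R\supseteq G\cup\partial G^c$ and space the cells by $L>\operatorname{diam}(R)$, so the resampling on $G_k$ is measurable with respect to $X_{R_k}$ and independent of everything else; then the analogue of Lemma~\ref{lem:parts}(a) collapses the conditional entropy given the past to the conditional entropy given the window, maximality of $h_\mu$ forces equality in the analogue of Lemma~\ref{lem:parts}(b), and that equality case is precisely uniformity of the law of $X_G$ conditioned on $X_{R\cap G^c}$ (the paper's Equation~\eqref{eq:unif}). One then lets the window grow and applies $H(\A\mid\F_n)\to H(\A\mid\F)$ along increasing $\sigma$-algebras (Lemma~\ref{lem:limit}) to upgrade finite-window uniformity to uniformity given $X_{G^c}$. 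Equivalently, in your contradiction framing: if non-uniformity given $X_{G^c}$ holds on a positive-measure set, martingale convergence already yields $H_\mu(X_G\mid X_{R^{(t)}\cap G^c})\le\E_\mu[\log N(X_{\partial G^c})]-\delta/2$ for some finite $t$, and with that finite window in hand your per-cell comparison does deliver the gain $\delta/2$ and the desired contradiction.
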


Given a factor triple $(X,Y,\pi)$ and an ergodic measure $\nu$ on $Y$, there can exist
more than one ergodic measure of relative maximal entropy over $\nu$; i.e., there can be more than one ergodic measure on $X$ which projects to $\nu$ and has maximal entropy among all measures in the fibre $\piinv\{\nu\}$, see
\cite[Example 3.3]{pqs}. We use Lemma~\ref{lem:limit} and follow techniques developed by Burton and Steif in the proof of Theorem~\ref{BuSt} to show the uniform conditional distribution property for measures of relative maximal entropy in Theorem~\ref{CUniformDist}.

\begin{lem}\label{lem:limit} \cite[Theorem 4.7]{walters}
Let $(X,\Balg,\mu)$ be a probability space. Let $\A$ be a finite sub-algebra of $\Balg$ and let $(\F_n)_{n=1}^\infty$ be an increasing sequence of sub-$\sigma$-algebras of $\Balg$ with $\bigvee_{n=1}^\infty\F_n=\F$. Then $H(\A|\F_n)\to H(\A|\F)$.
\end{lem}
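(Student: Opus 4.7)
The plan is to reduce the statement to a combination of the martingale convergence theorem and the dominated convergence theorem, exploiting the fact that $\A$ is finite and that the function $\phi(t)=-t\log t$ (with $\phi(0):=0$) is continuous and bounded on $[0,1]$.

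First I would enumerate $\A=\{A_1,\dots,A_k\}$ and expand the conditional entropies as
\[
H(\A\mid\F_n)=\sum_{i=1}^{k}\int_X \phi\bigl(\E[\mathbf{1}_{A_i}\mid\F_n]\bigr)\,d\mu,\qquad H(\A\mid\F)=\sum_{i=1}^{k}\int_X \phi\bigl(\E[\mathbf{1}_{A_i}\mid\F]\bigr)\,d\mu.
\]
Since the sums are finite, it suffices to prove term-by-term convergence of the integrals. Fix $i$. Because $\mathbf{1}_{A_i}\in L^{1}(\mu)$ and $(\F_n)$ is an increasing filtration with $\bigvee_n\F_n=\F$, Doob's forward martingale convergence theorem gives
\[
\E[\mathbf{1}_{A_i}\mid\F_n]\longrightarrow \E[\mathbf{1}_{A_i}\mid\F]\quad\mu\text{-a.s.}
\]

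Next I would apply the continuous function $\phi$ to pass pointwise convergence through: almost surely $\phi(\E[\mathbf{1}_{A_i}\mid\F_n])\to\phi(\E[\mathbf{1}_{A_i}\mid\F])$. Since each $\E[\mathbf{1}_{A_i}\mid\F_n]$ takes values in $[0,1]$ and $\phi$ is bounded by $1/e$ on that interval, the sequence of integrands is dominated by the constant $1/e$, which is integrable on the probability space. The dominated convergence theorem then gives convergence of each integral, and summing over the finitely many $A_i$ yields $H(\A\mid\F_n)\to H(\A\mid\F)$.

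The only non-routine ingredient is the invocation of martingale convergence, but since conditional expectations of bounded functions against an increasing filtration are a textbook application of Doob's theorem, there is no real obstacle; the finiteness of $\A$ is what allows the bounded continuous function $\phi$ to do all of the analytic work and avoids any need for a uniform integrability argument on $\phi$ itself. Should $\A$ be merely countable, one would have to be more careful with interchanging the sum and the limit, but under the stated hypothesis the argument closes cleanly.
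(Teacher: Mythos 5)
Your proof is correct and is essentially the standard argument for this result: the paper itself gives no proof, deferring to Walters' Theorem 4.7, whose proof is exactly this combination of the increasing (L\'evy/Doob) martingale convergence theorem with the boundedness and continuity of $\phi(t)=-t\log t$ on $[0,1]$ and dominated convergence, summed over the finitely many atoms of $\A$. Nothing further is needed.
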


\begin{thm}\label{CUniformDist}
Let $\pi:X\to Y$ be a 1-block factor code from a 1-step, 1-dimensional SFT $X$ to a
 sofic shift $Y$. Let $\nu$ be an invariant measure on $Y$, and $\mu$ be an
invariant measure of relative maximal entropy over $\nu$. Given a finite set $G\subseteq\Z$ and $y$ in $Y$ the
conditional distribution of $\mu$ on $\piinv(y)$ restricted to $G$ given
the configuration on $G^c$ is $\mu$-a.s. uniform over all configurations
on $G$ which extend the configuration on $\pGcom$ and map to the same
configuration in $Y$ under the factor code $\pi$.
\end{thm}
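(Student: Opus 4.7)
The plan is to adapt the argument of Burton and Steif \cite{bs} for Theorem~\ref{BuSt} to the relative setting, with Lemma~\ref{lem:limit} used to pass from conditioning on finite windows to the full $\sigma$-algebras. Throughout, write $\F$ for the sub-$\sigma$-algebra of $\Balg_X$ generated by the coordinates $\{x_i\}_{i\in G^c}$ together with $\pi^{-1}\Balg_Y$. The theorem is the statement that $\mu(\cdot\mid\F)$ is $\mu$-a.s.\ uniform on the (finite) set of admissible $G$-fillings, namely those $X$-blocks on $G$ that extend $x_{\pGcom}$ within the $1$-step SFT $X$ and project under $\pi$ to $\pi(x)_G$.

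The first step would be to recast the relative-maximality hypothesis in entropic terms: by the Abramov--Rokhlin formula, maximising $h_\mu(T_X)$ over invariant measures on $X$ projecting to $\nu$ is the same as maximising the fibre entropy $h_\mu(T_X\mid\pi^{-1}\Balg_Y)$. A Kolmogorov--Sinai-type expression, combined with Lemma~\ref{lem:limit}, represents this fibre entropy as a limit of conditional Shannon entropies $H_\mu(x_G\mid\F_M)$, where $\F_M$ is generated by $x_{G^c\cap[-M,M]}$ and $\pi(x)_{[-M,M]}$ and $M\to\infty$ (with the usual normalisation when $G$ itself grows).

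Then I would argue by contradiction. If the theorem fails, there is a finite $G$ and an $\F$-measurable set $E$ of positive $\mu$-measure on which $\mu(\cdot\mid\F)$ is not uniform over the admissible $G$-fillings. Define a new probability measure $\mu^\ast$ by redistributing, on $E$, the conditional mass uniformly over admissible fillings while keeping the $\F$-marginal of $\mu$, and by leaving the conditional distribution unchanged on $E^c$. Because the redistribution happens entirely within fibres of $\pi$, one still has $\pi_\ast\mu^\ast=\nu$; and by strict concavity of finite Shannon entropy applied pointwise on $\F$-atoms of $E$,
\begin{equation*}
H_{\mu^\ast}(x_G\mid\F) \;>\; H_\mu(x_G\mid\F).
\end{equation*}

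The final and most delicate step, which is the main obstacle, is to promote this single-window entropy gain into a strict gain in entropy rate for a shift-invariant competitor. Following \cite{bs}, $\mu^\ast$ is invariantised by averaging its shifts over long intervals $[-L,L]$ and passing to a weak-$*$ subsequential limit $\bar\mu$; invariance of $\nu$ gives $\pi_\ast\bar\mu=\nu$ in the limit. The gain in the inequality above survives because each of the $\sim L/|G|$ disjoint translates of $G$ inside $[-L,L]$ contributes an additive boost to the restricted Shannon entropy, producing $h_{\bar\mu}(T_X) > h_\mu(T_X)$ after dividing by $L$ and letting $L\to\infty$. This contradicts the relative maximality of $\mu$. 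The bookkeeping needed to justify this last inequality while carrying the constraint $\pi_\ast\mu=\nu$ throughout, and while invoking Lemma~\ref{lem:limit} to exchange limits in $M$ and $L$, is where the proof needs the most care.
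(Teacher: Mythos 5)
Your overall strategy --- perturb $\mu$ by uniformizing over admissible $G$-fillings within $\pi$-fibres, check the perturbation still projects to $\nu$, and derive a contradiction with relative maximality --- is the right one and matches the paper in spirit. But the final step, which you yourself flag as delicate, does not work as described, and the failure is structural rather than a matter of bookkeeping. You modify $\mu$ on a \emph{single} window $G$ to get $\mu^\ast$, and then invariantise by averaging the shifts $T^j_\ast\mu^\ast$ over $j\in[-L,L]$ and letting $L\to\infty$. Since $\mu^\ast$ agrees with $\mu$ on the $\sigma$-algebra generated by the coordinates in $G^c$, for any fixed cylinder $C$ one has $T^j_\ast\mu^\ast(C)=\mu(C)$ for all but finitely many $j$; hence the Ces\`aro averages converge weak-$\ast$ to $\mu$ itself, and the $O(1)$ entropy gain from the one modified window is annihilated by the $1/L$ normalisation. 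The claim that ``each of the $\sim L/|G|$ disjoint translates of $G$ inside $[-L,L]$ contributes an additive boost'' conflates the average of the shifted measures with a measure in which every translate of $G$ has been uniformized: $\frac{1}{2L+1}\sum_j T^j_\ast\mu^\ast$ is a convex mixture of measures each modified at only one translate, not a measure modified at all of them, so the boosts do not accumulate.

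The fix --- and this is what the paper does, following Burton--Steif --- is to uniformize over \emph{all} translates of the window simultaneously and independently before any averaging. Concretely, fix a boundary configuration $\eta$ on $\pGcom$ and a $Y$-configuration $\Delta$ on $G$, let $D$ be the set of $L$ admissible fillings, tile $\Z$ by translates $R_n$ of an interval $R=[-m,m-1]$ containing $G\cup\pGcom$, and push $\mu\times\lambda$ (with $\lambda$ the uniform Bernoulli measure on $D^\Z$) through a map that replaces $x_{G_n}$ by an independent uniform element of $D$ wherever the boundary pattern $\eta$ occurs. The resulting $\gamma$ is already $T^{2m}$-invariant, projects to $\nu$, and has entropy rate at least $h_\mu(T)$ by a chain of conditional-entropy comparisons (this is where Lemma~\ref{lem:limit} enters, together with the observation that under $\gamma$ the filling is independent of the past); averaging over only the $2m$ shifts needed to restore $T$-invariance does not dilute the gain. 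Maximality then forces equality in the one genuinely strict-unless-uniform step, which is the finite-window uniformity statement, and a second application of Lemma~\ref{lem:limit} upgrades it to conditioning on $\pi^{-1}(\Balg_Y)\vee\sigma(X_{G^c})$. Your Abramov--Rokhlin reduction is harmless but unnecessary; the simultaneous-randomisation construction is the missing ingredient without which no contradiction is obtained.
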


\begin{proof}
Let $G$ be a finite subset of $\Z$. Let $\Delta$ be a configuration of
$Y$ on $G$. Pick a configuration $\eta$ of $X$  on $\pGcom$ such that $\mu(\eta\cap\piinv(\Delta))>0$. Starting from $\mu$, we define a measure $\tilde\gamma$ on $X$ by uniformizing over pre-images of $\Delta$ that have $\eta$ on the boundary. We then show that if $\mu$ does not have the required uniform conditional distribution property then $\tilde\gamma$ has greater entropy than $\mu$, but still is an element of the fibre $\piinv\{\nu\}$. This is a contradiction and will therefore establish the required uniform conditional distribution property of $\mu$.

Let $D=\{\alpha_1,\dots,\alpha_{L}\}$ be the set of all configurations on $G$
which extend $\eta$ and map to $\Delta$ under the factor code $\pi$. Let the $\Z$-interval 
$R=[-m,m-1]$ be large enough so that $G\cup \partial G^c\subseteq
R$ and let $\left(R_n\right)_{n\in\Z}$ be the partition of $\Z$ by translates of $R$ ($R_0=R$), i.e. $R_n=[(2n-1)m,(2n+1)m-1]$. Let $G_n$ and $\partial G^c_n$ be the corresponding translates of $G$ and
$\partial G^c$ in $R_n$. Given $S\subseteq\Z$ let $\p(X_S)$ be the partition of $X$ generated by the configurations of $X$ on $S$, and $\sigma(X_S)$ be the $\sigma$-algebra generated by $\p(X_S)$. When $S=[a,b]$ is a $\Z$-interval we sometimes denote $X_S$ by $X_a^b$. Considering $R$ above, $\sigma(X_{-m}^{m-1})$ is the finite
$\sigma$-algebra generated by the partition
$$
\p(X_{-m}^{m-1})=\{\left._{-m}[x_{-m}x_{-m+1}\dots x_{m-1}]\right._{m-1}:
x_{-m}x_{-m+1}\dots x_{m-1}\in\mathscr L(X)\},
$$
and $\sigma(X_{-\infty}^{-m-1})=
\sigma(X^{-m-1}_{-3m})\vee\sigma(X^{-3m-1}_{-5m})\vee\dots$.

We obtain the measure $\gamma$ from the measures $\mu$ and $\eta$ as follows.
Define the map $\Phi:X\times D^{\Z}\rightarrow X$ by
\begin{equation*}
\Phi(x,\zeta)_{G_n}=
\begin{cases}
{\zeta_n }& \textrm{ if $x_{\partial G^c_n}=\eta$ and 
$x_{G_n}\in D$}
\\
x_{G_n}& \textrm  { otherwise,}
\end{cases}
\end{equation*}
and $\Phi(x,\zeta)_{G^c_n\cap R_n}=x_{G^c_n\cap R_n}$ for each integer $n$. Since $\zeta_n$ is in $D$ and each element of $D$ extends $\eta$, the
assumption that $X$ is a 1-step SFT implies that $\Phi(x,\zeta)$ lies in $X$. For each
$\zeta$ in $D^{\Z}$ we have $\pi(x)=\pi(\Phi(x,\zeta))$ since
$\Phi(x,\zeta)$ and $x$ are the same except having alternative
$\alpha_i$'s in the same positions. Let $C$ be a cylinder set of $X$. Define
$\gamma(C)=(\mu\times\lambda)\Phi^{-1}(C)$ where $\lambda$ is the
Bernoulli $(1/L,\dots,1/L)$ measure on $D^{\Z}$. The measure $\gamma$ is not necessarily invariant under $T$;
however, for each cylinder set $C$ we have
$\gamma(C)=\gamma(T^{-2m}C)$. So the new measure $\tilde\gamma$ on $X$ defined by 
$\tilde\gamma(C)=\frac{1}{2m}\left(\gamma(C)+\dots+\gamma(T^{-2m+1}C)\right)$ is $T$-invariant. Since for each cylinder set $E$ of $Y$ we have
$$\gamma(\piinv E)=(\mu\times\lambda)(\piinv E\times D^{\Z})=\nu(E),$$ we deduce that both measures $\gamma$ and $\tilde\gamma$ are in
the fibre $\piinv\{\nu\}.$

Define an equivalence relation on $X$ as follows. Given $x,x'$ in $X$,
say $x\sim_{0} x'$ if either $x_R=x'_R$ or else $x_{R\cap G^c}=x'_{R\cap
G^c}$, $x_{\pGcom}=x'_{\pGcom}=\eta$, and $x_G,x'_G$ in $D$. Denote the
equivalence class containing $x$ by $C_0(x)$. Such equivalence
classes form a sub-partition of $\p(X_{-m}^{m-1})$. Let $\A$ be the
$\sigma$-algebra generated by these equivalence classes. We show $h_{\tilde\gamma}(T)\geq h_\mu(T)$ as follows, using Lemma~\ref{lem:parts} which appears below.
\begin{align}\label{eq:maxentropy}
h_{\tilde\gamma}\left(T\right)&=
\frac{1}{2m}h_{\tilde\gamma}\left(T^{2m}\right)
\\
&=\frac{1}{2m}h_\gamma\left(T^{2m}\right)\notag
\\
&=\frac{1}{2m}H_{\gamma}\left(\sigma(X_{-m}^{m-1})|\sigma(X_{-\infty}^{-m-1})\right)\notag
\\
&=\frac{1}{2m}H_{\gamma}\left(\mathcal A|\sigma(X_{-\infty}^{-m-1})\right)+
\frac{1}{2m}H_{\gamma}\left(\sigma(X_{-m}^{m-1})|
\sigma(X_{-\infty}^{-m-1})\vee\mathcal A\right)\notag
\\
&=\frac{1}{2m}H_{\gamma}\left(\mathcal A|\sigma(X_{-\infty}^{-m-1})\right)
+\frac{1}{2m}H_{\gamma}\left(\sigma(X_{-m}^{m-1})|\mathcal A\right)&\text{Lem~\ref{lem:parts}}(a)\notag
\\
&\geq\frac{1}{2m}H_{\gamma}\left(\mathcal
A|\sigma(X_{-\infty}^{-m-1})\right)+
\frac{1}{2m}H_{\mu}\left(\sigma(X_{-m}^{m-1})|\mathcal
A\right)&\text{Lem~\ref{lem:parts}}(b)\notag
\\
&\geq \frac{1}{2m}H_{\mu}\left(\mathcal A|\sigma(X_{-\infty}^{-m-1})\right)
+\frac{1}{2m}H_{\mu}\left(\sigma(X_{-m}^{m-1})|\mathcal A\right)&\text{Lem~\ref{lem:parts}}(c)\notag
\\
&\geq\frac{1}{2m}H_{\mu}\left(\mathcal A|\sigma(X_{-\infty}^{-m-1})\right)
+\frac{1}{2m}H_{\mu}\left(\sigma(X_{-m}^{m-1})|
\sigma(X_{-\infty}^{-m-1})\vee \mathcal A\right)\notag
\\
&=\frac{1}{2m}H_{\mu}\left(\sigma(X_{-m}^{m-1})|\sigma(X_{-\infty}^{-m-1})\right)\notag
\\
&=\frac{1}{2m} h_{\mu}\left(T^{2m}\right)\notag
\\
&=h_{\mu}\left(T\right).\notag
\end{align}
\begin{lem}\label{lem:parts} Reusing previous notations, we have
\begin{enumerate}[(a)]
\item $H_{\gamma}\left(\sigma(X_{-m}^{m-1})|
\sigma(X_{-\infty}^{-m-1})\vee\mathcal A\right)=H_{\gamma}\left(\sigma(X_{-m}^{m-1})|
\mathcal A\right)$.
\item $H_{\gamma}\left(\sigma(X_{-m}^{m-1})|
\mathcal A\right)\geq H_{\mu}\left(\sigma(X_{-m}^{m-1})|
\mathcal A\right)$. Equality occurs if and only if $$\frac{\mu(\alpha\cap \bar A)}{\mu(\bar A)}=1/L,$$ for each $\alpha$ in $D$ and $\bar A$ in $\p(\A)$ such that for each $x$ in $\bar\A$ we have $x_{\pGcom}=\eta$ and $x_G$ in $D$.
\item $H_{\gamma}\left(
\mathcal A|\sigma(X_{-\infty}^{-m-1})\right)\geq H_{\mu}\left(
\mathcal A|\sigma(X_{-\infty}^{-m-1})\right)$.
\end{enumerate}
\end{lem}
\begin{proof}
By definition, for a measure $\rho$ in $\{\mu,\gamma\}$ we have
$$
H_{\rho}\left(\sigma(X_{-m}^{m-1})|\mathcal A\right)
=-\sum_{i,k}\rho(O_i\cap A_k)\log\frac{\rho(O_i\cap A_k)}
{\rho(A_k)}
$$
where $O_i$ is in $\p\left(X_{-m}^{m-1}\right)$ and $A_k$ is in $\p(\A)$ (if $\rho(O_i\cap A_k)$ is zero we define $\rho(O_i\cap A_k)\log\frac{\rho(O_i\cap A_k)}{\rho(A_k)}=0$). 
Let $x$ be in $A_k$. If $x_{\partial G^c}\not=\eta$ or $x_G\notin D$ then
for every $O_i$ in $\p\left(X_{-m}^{m-1}\right)$ we have either $O_i\cap
A_k=\emptyset$ or $O_i\cap A_k=A_k$ which both imply $\rho(O_i\cap
A_k)\log\frac{\rho(O_i\cap A_k)}{\rho(A_k)}=0$. Let $\{\bar
A_1,\dots,\bar
A_M\}$ be the set consisting of elements in the partition $\p(\A)$ so that for any point $x$ in $\bar {A}_k$ we have $x_{\partial G^c}=\eta$ and $x_G$ in $D$. Then
$$
H_{\rho}(\sigma(X_{-m}^{m-1})|\mathcal A)=-\sum_{i,k}\rho(O_i\cap \bar
A_k)\log\frac{\rho(O_i\cap \bar A_k)}{\rho(\bar A_k)}.
$$
There are exactly $L$ pairwise disjoint sets $O_i$ in $\p\left(X_{-m}^{m-1}\right)$ defined by blocks which agree everywhere except on $G$, and form a partition of $\bar A_k$. Let these sets
be denoted by $O_{k,1},\dots,O_{k,L}$ where $O_{k,i}=\alpha_i\cap \bar A_k$ for each $1\leq i\leq L$. It follows that 
\begin{equation}\label{eq:rho}
\begin{split}
H_{\rho}(\sigma(X_{-m}^{m-1})|\mathcal A)
&=-\sum_{k,i}\rho(O_{k,i}\cap \bar A_k)
\log\frac{\rho(O_{k,i}\cap \bar A_k)}{\rho(\bar A_k)}
\\
&=-\sum_{k,i}\rho(\alpha_i\cap \bar A_k)
\log\frac{\rho(\alpha_i\cap \bar A_k)}{\rho(\bar A_k)}.
\end{split}
\end{equation}
Let $n\geq 1$. By definition of $\gamma$, for each configuration $\alpha$ in $D$, each cylinder set $P$ in $\p\left(X_{-(2n+1)m}^{-m-1}\right)$, and each $1\leq k\leq M$ we have $$\frac{\gamma(\alpha\cap P\cap \bar A_k)}{\gamma(P\cap \bar A_k)}=\frac{\gamma(\alpha\cap \bar A_k)}{\gamma(\bar A_k)}=\frac{1}{L}.$$ 
It follows that
\begin{equation*}
\begin{split}
H_{\gamma}\left(\sigma\left(X_{-m}^{m-1}\right)|
\sigma\left(X_{-(2n+1)m}^{-m-1}\right)\vee\mathcal A\right)&=-\sum_{i,j,k}\gamma(\alpha_i\cap P_j\cap \bar A_k)
\log\frac{\gamma(\alpha_i\cap P_j\cap \bar A_k)}{\gamma(P_j\cap \bar A_k)}
\\
&=\log L \sum_{i,k}\gamma(\alpha_i\cap \bar A_k)
\\
&=H_{\gamma}\left(\sigma\left(X_{-m}^{m-1}\right)|
\mathcal A\right)
\end{split}
\end{equation*}
(use the same argument we had before Equation \eqref {eq:rho} to get the first equality above).
Then (a) follows from Lemma~\ref{lem:limit} and the fact that $\left(\sigma\left(X_{-(2n+1)m}^{-m-1}\right)\right)_{n=1}^\infty$ is an increasing sequence of $\sigma$-algebras with $\bigvee_{n=1}^\infty \sigma\left(X_{-(2n+1)m}^{-m-1}\right)=\sigma\left(X_{-\infty}^{-m-1}\right)$. 

(b) is a standard property of entropy.

To show (c) let $\mathcal A_{-(2n+1)m}^{-m-1}$ denote the $\sigma$-algebra
$\bigvee_{i=1}^n T^{-2mi}\mathcal A$ (the choice of notation is
intended to remind the reader that $\mathcal A_{-\infty}^{-m-1}$ is a
sub-$\sigma$-algebra of $\sigma(X_{-\infty}^{-m-1}$).

Any set $B$ in $\mathcal A_{-(2n+1)m}^{-m-1}$ splits up into $L^K$
elements of $\sigma(X_{-(2n+1)m}^{-m-1})$, $B_1,\ldots, B_{L^K}$ of
equal measure (where $K$ is the number of occurrences in $B$ of blocks
that are randomized under $\gamma$). Each of these sets has the
property that $\gamma(A\cap B_i)/\gamma(B_i)=\gamma(A\cap
B)/\gamma(B)$. Accordingly we see that $H_\gamma(\mathcal A|\mathcal
A_{-(2n+1)m}^{-m-1})=H_\gamma(\mathcal
A|\sigma(X_{-(2n+1)m}^{-m-1}))$. Taking a limit we obtain
\begin{equation*}
H_\gamma(\mathcal A|\sigma(X_{-\infty}^{-m-1}))=H_\gamma(\mathcal
A|\mathcal A_{-\infty}^{-m-1}).
\end{equation*}

We then notice that $H_\gamma(\mathcal A|\mathcal
A_{-\infty}^{-m-1})=H_\mu(\mathcal A|\mathcal A_{-\infty}^{-m-1})$
since the measures $\mu$ and $\gamma$ agree on elements of
$\bigvee_{i=0}^\infty T^{-2mi}\mathcal A$.

Finally it is clear that $H_\mu(\mathcal A|\mathcal
A_{-\infty}^{-m-1})\ge H_\mu(\mathcal A|\sigma(X_{-\infty}^{-m-1}))$.
Combining these three equalities and inequalities we obtain
$H_\gamma(\mathcal A|\sigma(X_{-\infty}^{-m-1}))\ge H_\mu(\mathcal
A|\sigma(X_{-\infty}^{-m-1}))$ as required.
\end{proof}

\noindent {\em Proof of Theorem~\ref{CUniformDist} (continued). }Since $\mu$ is a measure of relative maximal entropy it follows that all of the inequalities in Equations~\eqref{eq:maxentropy} are forced
to be equalities. In particular, we have
$H_{\mu}\left(\sigma\left(X_{-m}^{m-1}\right)|\A\right)=H_{\gamma}\left(\sigma\left(X_{-m}^{m-1}\right)|\A\right)$. Then Lemma~\ref{lem:parts}(b) implies that
\begin{align}\label{eq:unif}
\frac{\mu\left(\alpha\cap \bar A\right)}{\mu(\bar A)}=1/L,
\end{align}
for each configuration $\alpha$ in $D$ and $\bar A$ in $\p(\A)$ such that for each  $x$ in $\bar A$ we have $x_{\pGcom}=\eta$ and $x_G$ in $D$. Note that given a finite set $G\subseteq\Z$, both configurations $\Delta\in\Balg_Y$ on $G$ and $\eta\in\Balg_X$ on $\pGcom$ with $\mu(\eta\cap\piinv(\Delta))>0$ are chosen arbitrarily. By choosing different configurations and noting that $\bar
A\in\p\left(\pi^{-1}(\sigma(Y_G))\bigvee \sigma(X_{R\cap
G^c})\right)=\p\left(\pi^{-1}(\sigma(Y_R))\bigvee \sigma(X_{R\cap
G^c})\right)$, Equation \eqref{eq:unif} implies that for any configuration
$\alpha$ of $X$ occurring at $G$ we have
\begin{equation*}
\E\left(1_{\alpha}| \pi^{-1}(\sigma(Y_R))\vee \sigma(X_{R\cap G^c})\right)(x)=
\begin{cases}
 \frac{1}{L_{(x,G)}}
& \alpha\textrm{ extends }x_{\partial G^c},\, x_G\in\piinv_\text{b}(\s(\alpha))
\\
0&\textrm{otherwise.  }
\end{cases}
\end{equation*}
where $L_{(x,G)}$ is the number of configurations of $X$ occurring at $G$
which extend $x_{\pGcom}$ and project to $\s(x_G)$.

Now for a positive integer $t$ let $R^{(t)}=[-(m+t),m+t-1]$, and
let $(R^{(t)}_n)_{n\in\Z}$ be the partition of $\Z$ by translates of $R^{(t)}$
where $R^{(t)}_0=R^{(t)}$, i.e. $ R^{(t)}_n=[(2n-1)(m+t),(2n+1)(m+t)-1]$. Let
$R^{(0)}=R$. Since
$\left(\sigma\left(Y_{R^{(t)}}\right)\right)_{t=0}^\infty$ and $\left(\sigma\left(X_{R^{(t)}\cap G^c}\right)\right)_{t=0}^\infty$ are increasing sequences with $\bigvee_{t=0}^{\infty} \sigma(Y_{R^{(t)}})=\Balg_Y$ and
$\bigvee_{t=0}^{\infty}\sigma(X_{R^{(t)}\cap G^c})=\sigma(X_{G^c})$ it follows from Lemma~\ref{lem:limit} that
\begin{equation*}
\E\big(1_{\alpha}|\pi^{-1}(\Balg_Y)\vee \sigma(X_{ G^c})\big)(x)=
\begin{cases}
\frac{1}{L_{(x,G)}}
&\alpha\textrm{ extends }x_{\partial G^c},\,
x_G\in\piinv_\text{b}(\s(\alpha))
\\
0&\textrm{otherwise.}
\end{cases}
\end{equation*}
\end{proof}

\section{Class Degree of a factor code}
When $\pi$ is a finite-to-one factor code from a SFT $X$ to a sofic shift
$Y$, there is a uniform upper bound on the number of
pre-images of points in $Y$~\cite{lm}. The minimal number of $\pi$-pre-images of points in $Y$ is called the \textsf{degree} of the code and denoted by $d_\pi$. 
\begin{thm}\label{thm:samenumber}~\cite{lm}
Let $\pi:X\to Y$ be a finite-to-one factor code from a SFT $X$ to an
irreducible sofic shift $Y$. Then every doubly transitive point of $Y$ has exactly $d_{\pi}$
preimages. 
\end{thm}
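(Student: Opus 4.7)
The plan is to reduce to a convenient normal form via Theorem~\ref{thm:magicsymbol} and then to exhibit, for a doubly transitive $y\in Y$, a bijection between $\piinv(y)$ and the set of $X$-letters above the magic symbol. First, by Theorem~\ref{thm:magicsymbol} I may assume $\pi$ is a 1-block code from a 1-step SFT $X$ possessing a magic symbol $a\in\A(Y)$, so that $d(a,0)=d^*_\pi=|\s^{-1}(a)|$; both $d_\pi$ and the property of being doubly transitive are conjugacy invariants, so this reduction loses nothing. Now fix a doubly transitive $y$. By transitivity the set $\{k\in\Z:y_k=a\}$ is biinfinite, so fix a coordinate $k_0$ with $y_{k_0}=a$. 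The central claim is that the evaluation map
\[
\mathrm{ev}_{k_0}:\piinv(y)\to\s^{-1}(a),\qquad x\mapsto x_{k_0},
\]
is a bijection.

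For surjectivity, fix $a'\in\s^{-1}(a)$. Since $a$ is a magic symbol, the word $y_{[k_0-n,k_0+n]}$ has $a$ at the magic coordinate $k_0$, and because $d^*_\pi$ is the minimum value of $d(\cdot,\cdot)$ we get $d(y_{[k_0-n,k_0+n]},n)=d^*_\pi=|\s^{-1}(a)|$; in particular some $X$-preimage block of $y_{[k_0-n,k_0+n]}$ carries $a'$ at position $k_0$. Extending each such block to a biinfinite point of $X$ and passing to a compactness limit produces $x\in\piinv(y)$ with $x_{k_0}=a'$.

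For injectivity, suppose $x\neq x'\in\piinv(y)$ satisfy $x_{k_0}=x'_{k_0}$. Because $X$ is a 1-step SFT, the point $\tilde x$ equal to $x$ on $(-\infty,k_0]$ and to $x'$ on $(k_0,\infty)$ is a valid element of $X$ (the transition at $k_0$ is admissible because the two points agree there) and projects to $y$. Iterating such splicings at further magic positions of $y$ where $x$ and $x'$ still agree, I would produce more distinct preimages of $y$ than the uniform bound on $|\piinv(y')|$ allowed by the finite-to-one hypothesis, a contradiction.

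The bijection gives $|\piinv(y)|=|\s^{-1}(a)|=d^*_\pi$ for every doubly transitive $y$. Since $d_\pi$ is by definition the minimum of $|\piinv(y)|$ over $y\in Y$ and is attained on the doubly transitive points, one concludes $d_\pi=d^*_\pi$ and every doubly transitive point has exactly $d_\pi$ preimages. I expect the splicing step in the injectivity argument to be the main obstacle: a single splice at $k_0$ yields a genuinely new preimage only when $x$ and $x'$ disagree on both sides of $k_0$, so the general case requires careful bookkeeping of the disagreement intervals between consecutive magic positions and simultaneous splicings at multiple coordinates, in order to generate enough distinct preimages to contradict the finite-to-one bound.
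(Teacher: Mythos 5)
The paper does not actually prove this statement; it quotes it from Lind--Marcus \cite[Theorem 9.1.11]{lm} and only remarks that the proof there, written for irreducible $X$, uses nothing beyond irreducibility of $Y$. Your outline follows that classical route (recode to a magic symbol $a$, evaluate preimages at a magic coordinate), and the surjectivity half is essentially complete: $d(y_{[k_0-n,k_0+n]},n)\le d(a,0)=d^*_\pi$ because preimages of the long block restrict to preimages of $a$, minimality gives the reverse inequality, and the nested nonempty compact sets $\{x\in X\colon \s(x_{[k_0-n,k_0+n]})=y_{[k_0-n,k_0+n]},\ x_{k_0}=a'\}$ have a common point. One loose end there: for your final sentence you need $|\piinv(y)|\ge d^*_\pi$ for \emph{every} $y\in Y$ (so that the minimum defining $d_\pi$ is attained at doubly transitive points); this follows by running the same compactness argument at an arbitrary coordinate of an arbitrary point using $d(W,i)\ge d^*_\pi$ for all blocks $W$, but you only ran it at occurrences of the magic symbol in a doubly transitive point.

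The genuine gap is the injectivity step, which is where all the content of the theorem lives. Splicing two preimages $x\neq x'$ at a single coordinate of agreement $k_0$ produces at most a couple of further preimages (and produces nothing new at all if the two points agree on one side of $k_0$); finite-to-one only forbids \emph{infinitely many} preimages of a single point, so no contradiction can come from finitely many splices. Closing the argument requires two ingredients you do not supply. First, double transitivity must enter via a pigeonhole over the infinitely many occurrences of $a$ in $y$: if $|\piinv(y)|>d^*_\pi=|\sinv(a)|$, then at every occurrence of $a$ some two preimages collide, hence some fixed pair collides at infinitely many coordinates tending to $+\infty$, and some (possibly different) pair at infinitely many coordinates tending to $-\infty$. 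Second, one needs the no-diamond principle for finite-to-one codes: if that fixed pair disagreed between infinitely many consecutive agreement coordinates, independent splicing over those intervals would give uncountably many preimages of $y$, so the pair must eventually coincide on a half-line; one must then rule out the residual configuration in which the right-agreeing pair and the left-agreeing pair are different (this is exactly where the diamond/irreducibility argument of Lind--Marcus is used). This bookkeeping is not routine --- it is the heart of the proof --- so as written your proposal establishes the lower bound $|\piinv(y)|\ge d^*_\pi$ but not the upper bound.
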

Theorem 9.1.11 in~\cite{lm} shows this result when $X$ is irreducible. However, the proof only depends on the irreducibility of $Y$.
\begin{cor}~\label{cor:conjdeg}
Let $(X,Y,\pi)$ and $(\wt X,\wt Y,\wt\pi)$ be conjugate factor triples. Then we have $d_\pi=d_{\wt\pi}$.
\end{cor}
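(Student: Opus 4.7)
The plan is to exploit the commuting diagram defining conjugacy of factor triples. By hypothesis there exist bijective conjugacies $\phi:X\to\wt X$ and $\psi:Y\to\wt Y$ satisfying $\wt\pi\circ\phi=\psi\circ\pi$. First I would fix $y\in Y$ and argue by a diagram chase that $\phi(\piinv(y))=\wt\pi^{-1}(\psi(y))$. One inclusion is immediate: if $x\in\piinv(y)$ then $\wt\pi(\phi(x))=\psi(\pi(x))=\psi(y)$, so $\phi(x)\in\wt\pi^{-1}(\psi(y))$. The reverse inclusion comes from inverting the commuting identity to obtain $\pi\circ\phi^{-1}=\psi^{-1}\circ\wt\pi$ and applying the same reasoning to the pair $(\phi^{-1},\psi^{-1})$. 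Since $\phi$ is a bijection, restricting to $\piinv(y)$ gives $|\piinv(y)|=|\wt\pi^{-1}(\psi(y))|$.

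Next I would take the minimum over $y\in Y$, using that $\psi$ is a bijection from $Y$ onto $\wt Y$ so that $\psi(y)$ ranges over all of $\wt Y$:
\[d_\pi=\min_{y\in Y}|\piinv(y)|=\min_{y\in Y}|\wt\pi^{-1}(\psi(y))|=\min_{\wt y\in\wt Y}|\wt\pi^{-1}(\wt y)|=d_{\wt\pi}.\]
There is no substantive obstacle here: the argument is essentially a one-line diagram chase, relying only on the fact that the defining quantity $d_\pi$, as a minimum cardinality taken over the base $Y$, is manifestly transported by the conjugacy $\psi$. (Alternatively, one could invoke Theorem~\ref{thm:samenumber}, noting that $\psi$ carries doubly transitive points of $Y$ bijectively to doubly transitive points of $\wt Y$, so the common preimage count on doubly transitive points is transferred directly.)
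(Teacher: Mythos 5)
Your proof is correct. The paper itself supplies no written proof: it states this as an immediate corollary of Theorem~\ref{thm:samenumber}, so the intended argument is your parenthetical alternative --- $\psi$ carries doubly transitive points of $Y$ to doubly transitive points of $\wt Y$, $\phi$ preserves fibre cardinalities, and each such point has exactly $d_\pi$ (resp.\ $d_{\wt\pi}$) preimages. Your primary argument is more elementary and slightly more general: since $d_\pi$ is by definition $\min_{y\in Y}|\piinv(y)|$, the fibre-by-fibre bijection $\phi:\piinv(y)\to\wt\pi^{-1}(\psi(y))$ together with the surjectivity of $\psi$ transports the minimum directly, with no appeal to irreducibility of $Y$ or to the uniform count on doubly transitive points. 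What the theorem-based route buys is only economy of exposition in context (the theorem has just been stated); what your direct route buys is independence from the hypotheses of Theorem~\ref{thm:samenumber}. The only implicit point worth making explicit is that $\wt\pi$ is again finite-to-one (immediate from the same fibre bijection), so that $d_{\wt\pi}$ is defined at all.
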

\begin{defn}
A family $x^{(1)},\dots,x^{(k)}$ of sequences in a SFT $X$ is \textsf{mutually separated} if, for each integer $i$, $x^{(1)}_i,\dots, x^{(k)}_i$ are all distinct.
\end{defn}
\begin{prop}\label{prop:degreemagic}~\cite{lm}
Let $\pi:X\ra Y$ be a finite-to-one 1-block factor code from a SFT to an irreducible sofic shift $Y$. The preimages of a transitive point are mutually separated. If $\pi$ has a magic symbol $w$ then
$d_\pi=|\piinv_\text{b}(w)|$. 
\end{prop}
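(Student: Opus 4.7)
After recoding (Theorem~\ref{thm:magicsymbol} and Corollary~\ref{cor:conjdeg}) I may assume that $X$ is a 1-step SFT and that $\pi$ has a magic symbol $w\in\mathcal{A}(Y)$, so $|\piinv_\text{b}(w)|=d^*_\pi$. The plan is to prove both assertions simultaneously by establishing a single key fact: for every transitive $y\in Y$ with $y_n=w$ and every $a\in\piinv_\text{b}(w)$, there is exactly one preimage of $y$ taking value $a$ at coordinate $n$. Since Theorem~\ref{thm:samenumber} furnishes exactly $d_\pi$ preimages of $y$, this fact simultaneously yields $d_\pi=|\piinv_\text{b}(w)|$ and mutual separation of the $d_\pi$ preimages at every occurrence of $w$ in $y$.

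The existence part (surjectivity of $x\mapsto x_n$ from $\piinv(y)$ onto $\piinv_\text{b}(w)$) is the standard extensibility property of magic symbols: the minimality defining $d^*_\pi$ forces $d(V,j)=d^*_\pi$ for every $Y$-block $V$ containing $w$ at its magic coordinate $j$, so each $a\in\piinv_\text{b}(w)$ extends to a preimage of every finite $Y$-window around the chosen occurrence of $w$, and a compactness/diagonal argument promotes these to a bi-infinite preimage of $y$. The uniqueness part is a rigidity statement: two distinct preimages of $y$ with the same value at the magic coordinate $n$ would, after extending to the left and right along $y$ in $X$, produce two distinct $X$-blocks sharing their endpoints and their $\pi$-image --- a ``diamond'' excluded by finite-to-oneness. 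This step is the main obstacle; both pieces are standard in Lind--Marcus~\cite{lm} but carry the bulk of the combinatorial work.

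With separation at every occurrence of $w$ in hand, mutual separation at an arbitrary coordinate follows by a short splicing argument. Suppose, for contradiction, that $x^{(j)}_m=x^{(l)}_m$ for distinct $x^{(j)},x^{(l)}\in\piinv(y)$ and some $m\in\Z$. Since $X$ is 1-step and the two preimages agree at $m$, the spliced sequence $\hat x$ equal to $x^{(j)}$ on $(-\infty,m]$ and to $x^{(l)}$ on $[m,\infty)$ lies in $X$; because $\pi$ is 1-block, $\pi(\hat x)=y$, so $\hat x\in\piinv(y)$. Transitivity of $y$ provides $n'<m<n$ with $y_{n'}=y_n=w$; at $n$ one has $\hat x_n=x^{(l)}_n$, so separation at $n$ forces $\hat x=x^{(l)}$, while at $n'$ one has $\hat x_{n'}=x^{(j)}_{n'}$, so separation at $n'$ forces $\hat x=x^{(j)}$. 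Hence $x^{(j)}=x^{(l)}$, contradicting distinctness and completing the proof.
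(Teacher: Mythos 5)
The paper itself offers no proof of this proposition: it is quoted from Lind--Marcus, with the reader referred to Proposition 9.1.9 and Exercise 9.1.3 of \cite{lm}. Measured against that source, your skeleton is the right one --- a bijection $x\mapsto x_n$ from $\piinv(y)$ onto the fibre over a distinguished coordinate, then splicing --- and your final splicing paragraph is complete (modulo the fact that it needs occurrences of $w$ on \emph{both} sides of $m$, i.e.\ double transitivity of $y$, which is also what Theorem~\ref{thm:samenumber} actually assumes). However, your opening reduction is invalid. Neither conclusion of the proposition is a conjugacy invariant: under a higher-block conjugacy $\phi(x)_i=x_{[i,i+N]}$, two preimages can be mutually separated in $\wt X$ while agreeing at infinitely many coordinates of $X$, and the quantity $|\piinv_\text{b}(w)|$ refers to the original code, so proving the identity for a recoded triple with a different magic symbol says nothing about it (that $d^*_\pi$ is conjugacy-invariant for finite-to-one codes is a \emph{consequence} of this proposition and cannot be assumed). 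Hence ``after recoding I may assume $\pi$ has a magic symbol'' does not reduce the stated claims to the magic-symbol case. The repair is cheap and is what \cite{lm} does: the minimum defining $d^*_\pi$ is attained, so $\pi$ already has a magic \emph{word} $W$ with a magic coordinate $i$, and your whole argument runs verbatim with ``occurrence of $w$ at $n$'' replaced by ``magic coordinate of an occurrence of $W$,'' with no recoding.

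The second gap is the uniqueness step, which you yourself flag as ``the main obstacle'' and then defer. As sketched it does not close up: if $x\neq x'$ are preimages with $x_n=x'_n$ and they differ at some $m>n$, then cutting at a later occurrence of $w$ at $n''>m$ produces two distinct blocks with the same $\pi$-image and the same \emph{left} endpoint, but their right endpoints $x_{n''},x'_{n''}\in\piinv_\text{b}(w)$ need not coincide, so there is no diamond yet. Forcing agreement at some later magic coordinate --- in \cite{lm} this comes from showing that the transition relation between consecutive magic coordinates is the graph of a bijection, using minimality of $d^*_\pi$ together with a pigeonhole/no-diamond argument --- is precisely the combinatorial core of Proposition 9.1.9. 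Leaving it as a citation makes the proposal a correct outline of the standard proof rather than a proof.
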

For a proof, the reader is referred to Proposition 9.1.9 and Exercise 9.1.3 of~\cite{lm}.

We will find a quantity analogous to the degree when $\pi$ is an infinite-to-one factor
code. This will be done by developing the following equivalence relation
on $X$. Figure~\ref{fig:transition} illustrates Definition~\ref{defn:equiv}.
\begin{defn}\label{defn:equiv}
Suppose $(X,Y,\pi)$ is a factor triple and $x,\,x'\in X$. We say there is a \textsf{transition} from $x$ to
$x'$ and denote it by $x\to x'$ if for each integer $n$, there exists a point $v$ in $X$ so that
\begin{enumerate}
\item $\pi(v)=\pi(x)=\pi(x')$, and
\item $v_{-\infty}^n=x_{-\infty}^n,~~ v^{\infty}_i=x'^{\infty}_i$ for some $i\geq n$.
\end{enumerate}
Write $x\nrightarrow x'$ if the above conditions do not hold. We write $x\sim x'$, and say $x$
and $x'$ are in the same \textsf{transition class} if $x\ra x'$ and $x'\ra x$. The relation $\sim$ is an equivalence relation.
Denote the set of transition classes in $X$ over $y\in Y$ by
$\mathscr{C}_\pi(y)$. Sometimes we denote $\C_\pi(y)$ simply by
$\C(y)$ when there is no ambiguity in understanding $\pi$. Say $[x]\ra
[x']$ if $x\ra x'$ (well-defined). Use the notation $[x]\nrightarrow [x']$ otherwise.
 \end{defn}

\begin{figure}[h]
\centering{
\psfrag{$x$}{$x$}
\psfrag{$x'$}{$x'$}
\psfrag{$y$}{$y$}
\psfrag{$n$}{$n$}
\psfrag{$i$}{$i$}
\psfrag{$v$}{$v$}
\psfrag{p}{$\pi$}
\includegraphics[height=1in]{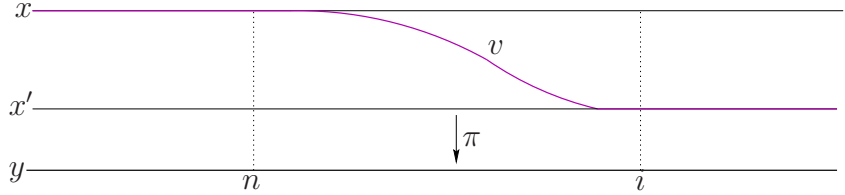}}
\caption{Transition from point $x$ to $x'$}
\label{fig:transition}
\end{figure}

The following fact is derived from Definition~\ref{defn:equiv} immediately.
\begin{fact}\label{fact:class}
Let $\pi:X\ra Y$ be a 1-block factor code from a 1-step SFT
$X$ to a sofic shift $Y$. Let $x,x'$ project to the same point under $\pi$ and $x_{a_i}=x'_{a_i}$ where
$(a_i)_{i\in\mathbb N}$ is a strictly increasing sequence in $\Z$. Then
we have $x\sim x'$.
\end{fact}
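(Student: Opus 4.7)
The plan is to verify the two directions $x\to x'$ and $x'\to x$ of the transition relation directly, by an explicit splicing construction at one of the agreement coordinates $a_i$. Once $x\to x'$ is established, the fact that the roles of $x$ and $x'$ are symmetric in the hypothesis gives $x'\to x$ for free, and hence $x\sim x'$ by definition.

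To prove $x\to x'$, fix an arbitrary integer $n$. Using that $(a_i)_{i\in\mathbb N}$ is strictly increasing in $\Z$, choose an index $N$ with $a_N>n$. Define the splice
\[
v_j=\begin{cases}x_j & \text{if } j\leq a_N,\\ x'_j & \text{if } j> a_N.\end{cases}
\]
I would then check three things. First, $v\in X$: the only consecutive pair in $v$ whose admissibility is not inherited verbatim from $x$ or $x'$ is $(v_{a_N},v_{a_N+1})$, and because $x_{a_N}=x'_{a_N}$ this pair equals $(x'_{a_N},x'_{a_N+1})$, which is allowed in $x'$; this is exactly where the 1-step hypothesis on $X$ is used. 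Second, $\pi(v)=\pi(x)=\pi(x')$: since $\pi$ is a 1-block code and $\pi(x)=\pi(x')$, one has $\pi(v)_j=\s(v_j)=\s(x_j)=\pi(x)_j$ for $j\leq a_N$ and $\pi(v)_j=\s(x'_j)=\pi(x')_j=\pi(x)_j$ for $j>a_N$. Third, the two coordinate conditions of Definition~\ref{defn:equiv} hold with $i=a_N+1\geq n$: by construction $v_{-\infty}^{a_N}=x_{-\infty}^{a_N}$, so in particular $v_{-\infty}^{n}=x_{-\infty}^{n}$, and $v_{a_N+1}^{\infty}=x'^{\,\infty}_{a_N+1}$.

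Since $n$ was arbitrary, the witness $v$ can be produced for every $n$, establishing $x\to x'$. Interchanging the roles of $x$ and $x'$ in the construction (the hypothesis $x_{a_i}=x'_{a_i}$ is symmetric) yields $x'\to x$, hence $x\sim x'$. There is no real obstacle here; the entire content of the argument is the observation that agreement at a single coordinate $a_N$ together with the 1-step SFT property makes the cut-and-paste legal, so that a common doubly-infinite agreement set is not needed—an increasing sequence of agreement coordinates pushed past $n$ suffices.
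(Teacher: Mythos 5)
Your proof is correct and is exactly the splicing argument the paper has in mind: the paper offers no written proof, stating only that the fact ``is derived from Definition~\ref{defn:equiv} immediately,'' and your cut-and-paste at an agreement coordinate $a_N>n$, justified by the 1-step and 1-block hypotheses, is that immediate derivation spelled out. Nothing is missing.
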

Note that Fact~\ref{fact:class} gives an obvious case
when two points lie in the same transition class. Simple examples allow one to find equivalent points without having a common symbol at the same time.

\begin{defn}
Let $(X,Y,\pi)$ be a factor triple. The minimal number of transition classes over points of $Y$ is called the \textsf{class degree} of $\pi$ and denoted by $c_\pi$.
\end{defn}
The following proposition shows that conjugate factor
triples have the same class degree.

\begin{prop}\label{prop:conjinv}
Let  $(X,Y,\pi)$ and $(\wt X,\wt Y,\wt\pi)$ be conjugate factor triples under conjugacies $\phi:X\ra\wt X$ and $\psi:Y\to\wt Y$. Then

 \begin{enumerate}[(a)]
  \item For each $y$ in $Y$, $|\C_\pi(y)|=|\C_{\wt\pi}(\psi(y))|.$
\item  $c_{\pi}=c_{\wt\pi}.$ 
 \end{enumerate}

\end{prop}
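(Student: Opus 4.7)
The plan is to show that the conjugacy $\phi\colon X\to\wt X$ descends to a bijection between transition classes in $\pi^{-1}(y)$ and transition classes in $\wt\pi^{-1}(\psi(y))$; then part (b) follows by taking minima and using that $\psi\colon Y\to\wt Y$ is a bijection.

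The first observation is that $\phi$ restricts to a bijection $\pi^{-1}(y)\to\wt\pi^{-1}(\psi(y))$. Indeed, the conjugacy identity $\wt\pi\circ\phi=\psi\circ\pi$ gives $\wt\pi(\phi(x))=\psi(\pi(x))$, so $\phi(x)\in\wt\pi^{-1}(\psi(y))$ if and only if $\pi(x)=y$; bijectivity is immediate from $\phi$ being a conjugacy.

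The heart of the argument is that $\phi$ (and symmetrically $\phi^{-1}$) preserves the transition relation. Here I would use that, being a conjugacy, $\phi$ is a sliding block code with some finite memory $M$ and anticipation $N$; that is, $\phi(u)_j$ depends only on $u_{[j-M,j+N]}$. Suppose $x\to x'$ in $X$ over $y$, and fix $n\in\Z$ at which we must witness a transition from $\phi(x)$ to $\phi(x')$ in $\wt X$. Apply the definition of $x\to x'$ to the index $n'=n+N$: there exists $v\in X$ with $\pi(v)=\pi(x)=\pi(x')$, $v_{(-\infty,n']}=x_{(-\infty,n']}$, and $v_{[i',\infty)}=x'_{[i',\infty)}$ for some $i'\ge n'$. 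Setting $\wt v=\phi(v)$, the conjugacy identity yields $\wt\pi(\wt v)=\psi(\pi(v))=\wt\pi(\phi(x))=\wt\pi(\phi(x'))$. By the memory/anticipation bound, $\wt v_j$ agrees with $\phi(x)_j$ whenever $j+N\le n'$, i.e.\ on $(-\infty,n]$, and $\wt v_j$ agrees with $\phi(x')_j$ whenever $j-M\ge i'$, i.e.\ on $[i'+M,\infty)$, with $i'+M\ge n$. This exhibits the required witness, so $\phi(x)\to\phi(x')$. The same argument applied to $\phi^{-1}$ shows the reverse implication, hence $x\sim x'$ iff $\phi(x)\sim\phi(x')$, and $\phi$ induces a well-defined bijection $\C_\pi(y)\to\C_{\wt\pi}(\psi(y))$. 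This gives (a), and (b) then follows:
\begin{equation*}
c_\pi=\min_{y\in Y}|\C_\pi(y)|=\min_{y\in Y}|\C_{\wt\pi}(\psi(y))|=\min_{\wt y\in\wt Y}|\C_{\wt\pi}(\wt y)|=c_{\wt\pi}.
\end{equation*}

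The only real obstacle is the coordinate bookkeeping to verify that the shifted window $n'=n+N$ and $i'+M$ land on the correct sides of the cut, and that the witness $\wt v=\phi(v)$ actually lies in $\wt X$ (which it does automatically since $\phi$ maps $X$ into $\wt X$). Everything else is formal.
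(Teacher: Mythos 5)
Your argument is correct and follows the same route as the paper: the paper's proof simply asserts that $\phi(u)\sim\phi(v)$ if and only if $u\sim v$ and deduces (a) and (b) from that, while you supply the Curtis--Hedlund--Lyndon memory/anticipation bookkeeping that justifies this assertion. The index shifts $n'=n+N$ and $i'+M$ are handled correctly, so the witness $\phi(v)$ does what you claim.
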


\begin{proof}
(a) is clear by noting that given $u,v$ in $X$, $\phi(u)\sim\phi(v)$ if and only if $u\sim v$, and (b) is a direct corollary of (a).    
\end{proof}

The following two theorems give an upper bound on the number of transition classes over points of $Y$. Later in Theorem~\ref{thm:classdegree} we give a finitary characterization of the number of such classes.

\begin{thm}\label{thm:symboloccurs}
Let $\pi:X\ra Y$ be a $1$-block factor code from a 1-step SFT $X$ to a
sofic shift $Y$. Let $y$ in $Y$ and let $w$ be a symbol in $Y$ occurring infinitely often in positive coordinates of $y$. Then $|\C(y)|\leq|\piinv_\text{b}(w)|$.
\end{thm}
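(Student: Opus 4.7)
The plan is to argue by contradiction using a two-stage pigeonhole argument, combined with Fact~\ref{fact:class} (which says that two preimages of $y$ agreeing at an infinite set of coordinates lie in the same transition class).

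Set $d=|\sinv(w)|$ and let $n_1<n_2<\cdots$ be the positions in $\mathbb{Z}$, infinitely many of which lie in $\mathbb{N}$, at which $y_{n_k}=w$. Suppose for contradiction that $|\C(y)|\geq d+1$, and pick representatives $x^{(1)},\dots,x^{(d+1)}$ from $d+1$ distinct transition classes in $\piinv(y)$. Since $\pi$ is a $1$-block code, for every index $k$ and every $j\in\{1,\dots,d+1\}$ we have $x^{(j)}_{n_k}\in\sinv(w)$, so the tuple $\bigl(x^{(1)}_{n_k},\dots,x^{(d+1)}_{n_k}\bigr)$ lies in the finite set $\sinv(w)^{d+1}$.

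By the pigeonhole principle there is a single tuple $(a_1,\dots,a_{d+1})\in\sinv(w)^{d+1}$ that is realized at infinitely many values of $k$; call this infinite set of indices $K$. Now applying pigeonhole once more, since we have $d+1$ entries $a_j$ drawn from a set of cardinality $d$, there exist indices $i\ne j$ with $a_i=a_j$. Therefore $x^{(i)}_{n_k}=x^{(j)}_{n_k}$ for every $k\in K$, which gives a strictly increasing sequence of coordinates on which $x^{(i)}$ and $x^{(j)}$ agree.

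Since $x^{(i)}$ and $x^{(j)}$ both project to $y$ under $\pi$ and $X$ is a $1$-step SFT, Fact~\ref{fact:class} forces $x^{(i)}\sim x^{(j)}$, contradicting the choice of representatives from distinct transition classes. Hence $|\C(y)|\le d=|\sinv(w)|$. The only non-routine step is the double pigeonhole (first to stabilize the cross-section tuple, then to force a repeat within it); once Fact~\ref{fact:class} is in hand the result follows immediately, so there is no substantial obstacle to overcome.
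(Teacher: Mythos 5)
Your proof is correct and follows essentially the same route as the paper's: take $d+1$ representatives of distinct classes, use a double application of the pigeonhole principle to produce two of them agreeing on an infinite increasing sequence of coordinates (positions where $w$ occurs), and conclude via Fact~\ref{fact:class}. The only difference is the order of the two pigeonhole steps (you stabilize the whole tuple first, the paper first finds a coincidence at each position and then stabilizes the pair), which is immaterial.
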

\begin{proof}
Let $(a_j)_{j\in\mathbb N}$ be a strictly increasing sequence of integers such that $y_{a_j}=w$ for each $j\in\mathbb N$. Let $|\piinv_\text{b}(w)|=d$ and suppose $\pi^{-1}(y)$ contains $d+1$
distinct transition classes $C_1,\dots,C_{d+1}$. Form a set
$A=\{x_1,\dots,x_{d+1}\}$ where $x_i$ is an arbitrary point of $C_i$.
Since $\piinv_\text{b}(w)$ contains exactly $d$ symbols, it follows that for
each $j\in\mathbb N$ there are at least two points in $A$ with the same
$a_j$th coordinate. The Pigeonhole Principle implies that there is a
subsequence $(b_{k})_{k\in\mathbb N}$ of $(a_j)_{j\in\mathbb N}$ and at least two points $x$ and
$x'$ in $A$ with $x_{b_k}=x'_{b_k}$ for each $k\in\mathbb N$. Fact~\ref{fact:class} implies that
$x\sim x'$. This contradicts the assumption that $x$ and $x'$
are in different transition classes.
\end{proof}
A similar proof to the above implies the following theorem. 
\begin{thm}\label{thm:classsize}
Let $\pi:X\ra Y$ be a $1$-block factor code from a 1-step SFT $X$ to a
sofic shift $Y$. Then $|\C(y)|<\infty$ for each $y\in Y$. Moreover, \DS{min_{w\in\A(Y)}\{|\piinv_\text{b}(w)|\}} is the uniform upper bound on the number transition classes over each right transitive point of $Y$.
\end{thm}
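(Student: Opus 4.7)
The plan is to obtain both conclusions as direct consequences of Theorem~\ref{thm:symboloccurs}. For the finiteness claim, fix an arbitrary $y\in Y$. Since $\A(Y)$ is finite while $y$ has infinitely many positive coordinates, the pigeonhole principle supplies at least one symbol $w\in\A(Y)$ that occurs at infinitely many positive coordinates of $y$. Theorem~\ref{thm:symboloccurs} then yields $|\C(y)|\le|\piinv_\text{b}(w)|$, and the right-hand side is finite because $\A(X)$ is finite.

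For the uniform bound over right transitive points, I would first establish the small observation that if $y\in Y$ is right transitive, then \emph{every} symbol $w\in\A(Y)$ occurs infinitely often at positive coordinates of $y$. To see this, note that $T^j y$ is again right transitive for every $j\ge 0$, so the length-one block $w$ (which lies in the language of $Y$) must appear at some non-negative coordinate of $T^j y$, i.e.\ at some coordinate $\ge j$ of $y$; letting $j\to\infty$ produces infinitely many occurrences of $w$ in the positive coordinates of $y$.

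With this observation in hand, I apply Theorem~\ref{thm:symboloccurs} to each $w\in\A(Y)$ separately to get $|\C(y)|\le|\piinv_\text{b}(w)|$ for every $w$, and then take the minimum over $w$ to obtain
\begin{equation*}
|\C(y)|\le\min_{w\in\A(Y)}|\piinv_\text{b}(w)|,
\end{equation*}
as required. No new combinatorial work is needed: the real content—using Fact~\ref{fact:class} plus a pigeonhole on coordinates where $y$ shows a fixed symbol $w$ to force two representatives of distinct classes to agree on an infinite subsequence—is already carried out in the proof of Theorem~\ref{thm:symboloccurs}. The only mild subtlety is the right-transitive-implies-infinitely-often step above, and this is what the phrase ``a similar proof'' is alluding to.
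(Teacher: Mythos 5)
Your proposal is correct and follows essentially the same route the paper intends: the paper's ``a similar proof to the above'' is exactly the pigeonhole/Fact~\ref{fact:class} argument that you instead invoke as a black box via Theorem~\ref{thm:symboloccurs}, first choosing by pigeonhole a symbol occurring infinitely often in the positive coordinates of an arbitrary $y$ to get finiteness, and then using that a right transitive point exhibits \emph{every} symbol of $\A(Y)$ infinitely often in its positive coordinates to get the uniform bound. Your justification of that last observation (each $T^j y$ is again right transitive, so $w$ occurs at some coordinate $\geq j$) is sound, so nothing is missing.
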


We show when
$\pi:X\ra Y$ is a factor code from a SFT $X$ to an irreducible sofic shift $Y$, there are exactly $c_\pi$ transition classes over a transitive point of $Y$. In order to show this, we introduce another quantity $c^*_\pi$ in Definition~\ref{defn:cStar}, defined concretely in
terms of blocks. Since the class degree is invariant under conjugacy of factor triples, by recoding we may focus only on 1-step SFTs and 1-block factor codes. 

\begin{defn}\label{defn:TB}
Let $\pi:X\ra Y$ be a $1$-block factor code from a 1-step SFT $X$ to a
sofic shift $Y$. Let $W$ be a $Y$-block of length $p+1$. Let $n$ be an integer in $(0,p)$, and $M$ be a subset of $\piinv_\text{b}(W_n)$. We say $U\in\piinv_\text{b}(W)$ \textsf{is routable through} $a\in M$ \textsf{at time} $n$ if there is a block $U'\in\piinv_\text{b}(W)$ with
$U'_0=U_0,~U'_{n}=a$, and $U'_{p}=U_{p}$. A triple $(W,n,M)$ is called a
\textsf{transition block} of $\pi$ if every block in $\piinv_\text{b}(W)$ is routable through a symbol of $M$ at time $n$. The cardinality of the set $M$ is called the \textsf{depth} of the
transition block $(W,n,M)$.
\end{defn}
Figure~\ref{fig:depth}
illustrates Definition~\ref{defn:TB}. 
\begin{figure}[h!]
\centering
\psfrag{$U$}{$U$}
\psfrag{$V$}{$V$}
\psfrag{$K$}{$K$}
\psfrag{$U'$}{$U'$}
\psfrag{$V'$}{$V'$}
\psfrag{$K'$}{$K'$}
\psfrag{$a_1$}{$a_1$}
\psfrag{$a_2$}{$a_2$}
\psfrag{$W_0$}{$W_0$}
\psfrag{$W_{p}$}{$W_p$}
\psfrag{$p$}{$\pi$}
\psfrag{$n$}{$n$}
\includegraphics[height=2in]{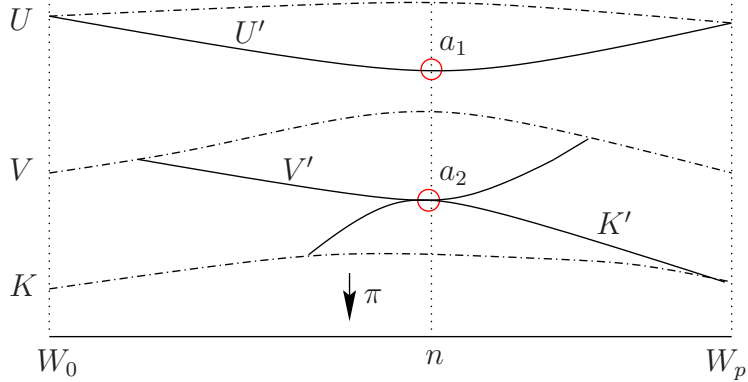}
\caption{$(W,n,M)$ is a transition block with $M=\{a_1,a_2\}$. The blocks $U,\,V,\,K\in\piinv_\text{b}(W)$ are routable through members of $M$ at time $n$ via blocks $U',\,V',\,K'\in\piinv_\text{b}(W)$.}
\label{fig:depth}
\end{figure}
\begin{defn}\label{defn:cStar}
Let
$$c^*_{\pi}=\min\{|M|\colon (W,n,M)\textrm{ is a transition block of $\pi$}\}.$$
A \textsf{minimal transition block} of $\pi$ is a
transition block of depth $c^*_\pi$.
\end{defn}
\begin{ex}\label{ex:MTB}
In figure~\ref{fig:MTB}, we display an example of a labeled graph which
defines an infinite-to-one 1-block factor code $\pi$. We see that
$(001,\,1,\,\{b\})$ is a minimal transition block of $\pi$ of depth 1. For example, observe that block $U=aac$ is routable through $b$ at time 1 by considering $U'=abc$.
\begin{figure}[H]
\centering
\psfrag{$a$}{$a$}
\psfrag{$b$}{$b$}
\psfrag{$c$}{$c$}
\psfrag{$d$}{$d$}
\psfrag{0}{$0$}
\psfrag{1}{$1$}
\includegraphics[height=1.2in]{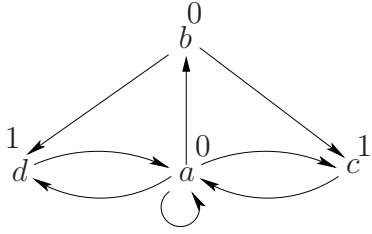}
\caption{Graph for Example~\ref{ex:MTB}}
\label{fig:MTB}
\end{figure}

\end{ex}
We need to develop some definitions and lemmas before proving Theorem~\ref{thm:classdegree} below. Given a factor triple $(X,Y,\pi)$, a point $y$ in $Y$ and a block $V$ occurring in $y$, it may be the case that some preimages of $V$ can not be extended to a point in $X$ which projects to $y$ under $\pi$. Here we show that in order to recognize those non-extendable blocks we do not need to deal with a bi-infinite sequence. A finite extension of a block is sufficient to determine if the block can be extended to a point above $y$.

\begin{defn}\label{defn:exten}
Let $(X,Y,\pi)$ be a factor triple and $y$ be in $Y$. Let $V$ be a block of $Y$ occurring in $y$ at a $\Z$-interval $[m,n]$. A Block $W$ of $Y$ is called a \textsf{$y$-synchronizing extension} of $V$ if $W$ occurs in $y$ at $[m-l,n+l]$ for some integer $l\geq 0$ and
\begin{equation*}
\begin{split}
\{U\in\piinv_\text{b}(V)\colon& \exists x\in\piinv(y),\,x_{[m,n]}=U\}
\\
&=\{U\in\piinv_\text{b}(V)\colon \exists\, B\in\piinv_\text{b}(W),\,B_{[l,l+n-m]}=U\}.
\end{split}
\end{equation*}
\end{defn}

\begin{lem}\label{lem:exten}
Let $(X,Y,\pi)$ be a factor triple and $y$ in $Y$. Let $V$ be a block occurring in $y$ at a $\Z$-interval $[m,n]$. There is a $y$-synchronizing extension of $V$.
\end{lem}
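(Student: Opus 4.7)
}
The plan is to extend $V$ inside $y$ and observe that the set of preimages of $V$ that can be completed to preimages of the extension shrinks monotonically; since $\pi_\text{b}^{-1}(V)$ is finite, this decreasing sequence of sets must stabilize, and a compactness argument will identify its stable value with the set of preimages of $V$ extendable to all of $\pi^{-1}(y)$.

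More precisely, for each integer $l\ge 0$ let $W^{(l)}=y_{[m-l,n+l]}$, so $W^{(0)}=V$, and set
$$
E_l=\{U\in\piinv_\text{b}(V)\colon \exists\,B\in\piinv_\text{b}(W^{(l)}),\;B_{[l,l+n-m]}=U\}.
$$
First I would check that $E_{l+1}\subseteq E_l$: any $B\in\piinv_\text{b}(W^{(l+1)})$ witnessing membership in $E_{l+1}$ restricts to a block in $\piinv_\text{b}(W^{(l)})$ that still carries $U$ in the middle. Since each $E_l$ is a subset of the finite set $\piinv_\text{b}(V)$, the nested sequence stabilizes, i.e., there exists $l^*$ such that $E_l=E_{l^*}$ for every $l\ge l^*$.

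Next I would let $W=W^{(l^*)}$ and show this is a $y$-synchronizing extension of $V$, which amounts to proving
$$
E_{l^*}=\{U\in\piinv_\text{b}(V)\colon\exists\,x\in\piinv(y),\,x_{[m,n]}=U\}=:E_\infty.
$$
The inclusion $E_\infty\subseteq E_{l^*}$ is immediate by restricting any such $x$ to $[m-l^*,n+l^*]$. For the other direction, fix $U\in E_{l^*}$ and for each $l\ge l^*$ consider the set
$$
F_l=\{x\in X\colon \pi(x)_{[m-l,n+l]}=y_{[m-l,n+l]}\text{ and } x_{[m,n]}=U\}.
$$
Each $F_l$ is closed in the compact space $X$; it is nonempty because any witness $B\in\piinv_\text{b}(W^{(l)})$ with $B_{[l,l+n-m]}=U$ extends to a point of $X$ (SFTs have the property that any block in the language extends bi-infinitely), and this extension lies in $F_l$. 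The sets $F_l$ are nested, so by the finite intersection property there is $x\in\bigcap_{l\ge l^*}F_l$, which by definition satisfies $\pi(x)=y$ and $x_{[m,n]}=U$; hence $U\in E_\infty$.

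I do not anticipate a serious obstacle here: the argument is a standard combination of monotonicity plus finiteness plus compactness. The only point worth writing carefully is the reindexing in the restriction step showing $E_{l+1}\subseteq E_l$, and the remark that in an SFT any locally admissible block can be completed to a point, which justifies nonemptiness of each $F_l$.
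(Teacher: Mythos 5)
Your proof is correct and follows essentially the same route as the paper: stabilization of the non-increasing sequence of finite sets $S^l$ of middle-extendable preimages, followed by a compactness argument to upgrade a stabilized witness to a full preimage of $y$. Your formulation of the compactness step via the genuinely nested closed sets $F_l$ is in fact slightly cleaner than the paper's, which intersects cylinder sets over independently chosen witnesses $B^r$ without noting that they need not be nested.
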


\begin{proof}
Let $$S=\{U\in\piinv_\text{b}(V)\colon \exists\, x\in\piinv(y),\,x_{[m,n]}=U\}.$$
For $l\geq 0$ let 
$$S^l=\{U\in\piinv_\text{b}(V)\colon \exists\, B^l\in\piinv_\text{b}(y_{[m-l,n+l]}),\, B^l_{[l,l+n-m]}=U\}.$$

We show for some $l$ we have $S=S^l$. Then it will imply that $y_{[m-l,n+l]}$ is a $y$-synchronizing extension of $V$.

Clearly for all $l\geq 0$ we have $S\subseteq S^l$. We show $S^l\subseteq S$ for some $l$ as follows. First note that since $\pi$ is a factor code, $S$ is not empty. It follows that the sequence $(S^l)_{l=0}^\infty$ is a non-increasing sequence of non-empty finite sets. Thus there is $l$ such that for each $r\geq l$ we have $S^l=S^{r}$. We show $S^l\subseteq S$. 

Let $U$ be in $S^l$. Then $U$ is in $S^r$ for each $r\geq l$. It follows that for each $r\geq l$ there is $B^r$ in $\piinv_\text{b}(y_{[m-r,n+r]})$ with $B^r_{[r,r+n-m]}=U$. For each $r\geq l$ consider the cylinder set $C_{m-r}(B_r)$. The sequence $(C_{m-r}(B_r))_{r\geq l}$ is a non-increasing sequence of non-empty compact subsets of $X$. It follows that $\bigcap_{r\geq l}C_{m-r}(B_r)$ is not empty. Let $x\in \bigcap_{r\geq l}C_{m-r}(B_r)$. Then $x_{[m,n]}=U$ and $\pi(x)=y$ which implies that $U$ is in $S$.
\end{proof}
Here we introduce a relation between symbols and transition classes.
\begin{defn}\label{defn:belong}
Let $\pi:X\ra Y$ be a $1$-block factor code from a 1-step SFT $X$ to a sofic
shift $Y$. Let $y$ be in $Y$. Let $n$ be in $\Z$, $i$ be a symbol in $\piinv_\text{b}(y_n)$, and $C$ be a transition class in $\C(y)$. Say $i$ \textsf{belongs to} $C$ \textsf{at time} $n$, and denote it by $i\in S_n(C)$, if
$$\{D\in\C(y)\colon
\textrm{there is } x \textrm{ in } D \textrm{ with }x_n=i\}=\{D\in\C(y):C\ra D\}.$$ In other words, the set of transition classes in $\C(y)$ containing a point with $n$th
coordinate being $i$, is the same as the set of transition classes to which there is a transition from the class $C$ (including $C$ itself).

Say $i$ is
\textsf{transient at time $n$} if there is no transition class $C$ in $\C(y)$ for which $i\in
S_n(C)$.
\end{defn}
Example~\ref{ex:transient} illustrates the definition above.
\begin{ex}\label{ex:transient}
Consider the directed labeled graph in Figure~\ref{fig:transient} which
presents a 1-block factor code $\pi:X\to Y$ where
$X\subseteq\{a,\,b,\,c,\,d,\,e,\,f,\,g\}^\Z$ and $Y\subseteq\{0,\,1\}^\Z$. Let $y$ be the point $\cdots 01\overset{*}0 1010\cdots$ in $Y$. By
Definition~\ref{defn:equiv}, there are 3 distinct transition classes in $\C(y)$ as
follows:

\begin{figure}[h]
\centering
\psfrag{a}{$a$}
\psfrag{b}{$b$}
\psfrag{c}{$c$}
\psfrag{d}{$d$}
\psfrag{e}{$e$}
\psfrag{f}{$f$}
\psfrag{g}{$g$}
\psfrag{0}{$0$}
\psfrag{1}{$1$}
\includegraphics[height=1in]{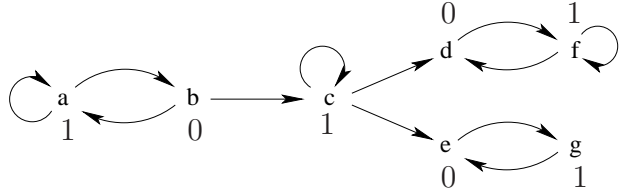}
\caption{Graph for Example~\ref{ex:transient}}
\label{fig:transient}
\end{figure}
\begin{enumerate}
\item Transition Class $C_1=[x_1]=\{x_1\}$ where $x_1=\cdots ba\overset{*}babab\cdots$.
\item
Transition Class $C_2=[x_2]$ where $x_2$ is a point in $\piinv(y)$ which does not contain symbols $e$ or $g$ but contains only $d$'s
and $f$'s from some time onwards, for example: $x_2=\cdots ba\overset{*}bcdfd\cdots$.
\item
Transition Class $C_3=[x_3]$ where $x_3$ is a point in $\piinv(y)$ which does not contain symbols $d$ or $f$ but contains only $e$'s
and $g$'s from some time onwards, for example: $x_3=\cdots eg\overset{*}egege\cdots$.
\end{enumerate}
Clearly $C_1\ra C_2$ and $C_1\ra C_3$, but not vice versa. $C_2\nrightarrow C_3$, $C_3\nrightarrow C_2$, and symbol $c$ is transient at any time.

Note that the class degree of the factor code $\pi$ is 2 (consider a point of $Y$ in which the block $11$ occurs at some time).
\end{ex}

\begin{lem}\label{nontransient}
Let $\pi:X\ra Y$ be a $1$-block factor code from a 1-step SFT $X$ to a
sofic shift $Y$, and $x$ be a point in $X$. There is 
$m<\infty$ such that for each $n\geq m$ the symbol $x_n$ is not transient.
In fact, $m$ can be found in such a way that for each $n\geq m$, $x_n$
belongs to the class $[x]$ at time $n$; i.e., $x_n\in S_n[x]$.
\end{lem}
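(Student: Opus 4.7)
The plan is to fix $y=\pi(x)$, set $\Lambda=\{D\in\C(y):[x]\ra D\}$, and for each integer $n$ set $\Gamma_n=\{D\in\C(y):\exists\,z\in D,\,z_n=x_n\}$. By Definition~\ref{defn:belong}, showing $x_n\in S_n[x]$ amounts to proving $\Gamma_n=\Lambda$, so I aim to establish this equality for all sufficiently large $n$; the ``not transient'' conclusion is then automatic.

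The inclusion $\Lambda\subseteq\Gamma_n$ holds for every $n$. Given $D\in\Lambda$ and any $z\in D$, applying the relation $x\ra z$ at level $n$ yields $v\in X$ with $\pi(v)=y$, $v_{-\infty}^n=x_{-\infty}^n$, and $v_i^\infty=z_i^\infty$ for some $i\geq n$. Fact~\ref{fact:class} (applied to the infinite sequence of coordinates $\geq i$) gives $v\sim z$, so $v\in D$, and $v_n=x_n$ witnesses $D\in\Gamma_n$. The reverse inclusion for large $n$ is the heart of the proof. If it failed, then $\Gamma_n\not\subseteq\Lambda$ for infinitely many $n$; since $\C(y)$ is finite by Theorem~\ref{thm:classsize}, the pigeonhole principle supplies a fixed class $D\in\C(y)\setminus\Lambda$ with $D\in\Gamma_n$ for infinitely many $n$.

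Fix any $z\in D$; I will derive $x\ra z$, contradicting $[x]\nrightarrow D$. Given an arbitrary level $n'$, pick $n>n'$ with $D\in\Gamma_n$ and a point $z^{(n)}\in D$ satisfying $z^{(n)}_n=x_n$. Using the 1-step SFT hypothesis and the agreement $x_n=z^{(n)}_n$, splice the point $v':=x_{-\infty}^{n-1}z^{(n)}_n z^{(n)}_{n+1}\cdots\in X$. Then $\pi(v')=y$, $v'_{-\infty}^{n'}=x_{-\infty}^{n'}$, and $v'$ agrees with $z^{(n)}$ on all of $[n,\infty)$, so Fact~\ref{fact:class} gives $v'\sim z^{(n)}$; in particular $v'\in D$, so $v'\sim z$.

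The main obstacle is that the tail of $v'$ matches $z^{(n)}$ rather than the fixed $z$, so $v'$ does not directly witness the transition $x\ra z$ at level $n'$. To bridge this gap I invoke $v'\ra z$ (which holds since $v'\sim z$) at level $n'$: this produces $v$ with $\pi(v)=y$, $v_{-\infty}^{n'}=v'_{-\infty}^{n'}=x_{-\infty}^{n'}$, and $v_i^\infty=z_i^\infty$ for some $i\geq n'$, which is exactly the witness required for $x\ra z$ at level $n'$. Since $n'$ was arbitrary, $x\ra z$ holds, so $[x]\ra D$, a contradiction. Hence $\Gamma_n\subseteq\Lambda$ holds for all $n$ past some threshold $m$, and combined with the first paragraph this gives $\Gamma_n=\Lambda$, i.e.\ $x_n\in S_n[x]$, for all $n\geq m$.
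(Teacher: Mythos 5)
Your proof is correct and follows essentially the same route as the paper's: both establish the inclusion $\{D:[x]\ra D\}\subseteq\Gamma_n$ directly from the definition of a transition, and both obtain the reverse inclusion for large $n$ by splicing $x_{-\infty}^{n}$ onto the tail of a point of a non-reachable class agreeing at coordinate $n$, placing the splice in that class via Fact~\ref{fact:class}, and using transitivity of $\ra$ through $\sim$ to contradict $[x]\nrightarrow D$. The only difference is bookkeeping: the paper records, for each class $C$ with $[x]\nrightarrow C$, a minimal blocking coordinate $i^C$ and takes the maximum, whereas you run the same contradiction on a cofinal set of agreement times; the content is identical.
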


\begin{proof}
Let $\pi(x)=y$. Suppose that the transition class $C$ in $\C(y)$ satisfies $[x]\nrightarrow C$. Then there exists $i<\infty$ such that for any $z$ in $\piinv(y)$
with $z_i=x_i$ we have $z\notin C$. Denote the smallest such $i$ by $i^C$. Note that $i^C$ may be $-\infty$ (but not $+\infty$). Let $m=\max\{i^C: C\in\C(y),\, [x]\nrightarrow C\}$. Let $n\geq m$. We show \begin{equation}\label{eq:2sets}
\{C\in\C(y):[x]\ra C\}=\{C\in\C(y)\colon
\textrm{there is } z\in C \textrm{ with }z_n=x_n\},
\end{equation}
which will imply the result $x_n\in S_n[x].$

Let  $C$ be a transition class in $\C(y)$ with $[x]\to C$. It follows that $x\to z$ for some $z$ in $C$. Therefore, there is a point $u$ in $\piinv(y)$ as follows.
\begin{equation*}
u_t=
\begin{cases}
{x_t}& \textrm{ if }-\infty<t\leq n
\\
z_t&\textrm{ if }l\leq t<\infty,
\end{cases}
\end{equation*}
for some $l\geq n$. Since $u_t=z_t$ from time $l$ onwards, it follows that $u$ is in the class $C$. Moreover, we have $u_n=x_n$ by the construction. It follows that $C$ belongs to the set in the right hand side of Equation~\eqref{eq:2sets}.

Now let $C$ be an equivalence class in $\C(y)$ with $[x]\nrightarrow C$. Suppose, for a contradiction, $C$ contains a point $z$ with $z_n=x_n$. Form the point $u$ as follows.
\begin{equation*}
u_t=
\begin{cases}
{x_t}& \textrm{ if }-\infty<t\leq n
\\
z_t&\textrm{ if }n< t<\infty.
\end{cases}
\end{equation*}
Note that since $X$ is a 1-step SFT and $z_n=x_n$, the point $u$ is in $X$. Moreover $u$ maps to $y$ by the construction. Noticing that $u_t=z_t$ from the time $n$ onwards, shows that $u$ lies in the class $C$. Recall that $n\geq i^C$. By the argument in the first paragraph of the proof, any point in $\piinv(y)$ with $i^C$th coordinate equal to $x_{i^C}$ does not belong to $C$. This contradicts that $u$ is a point in $C$.  
\end{proof}
\begin{lem}\label{lem:S(C)}
Let $\pi:X\ra Y$ be a $1$-block factor code from a 1-step SFT $X$ to
a sofic shift $Y$. Let $y\in Y$. There is $m<\infty$ such that for
each $n\geq m$ and $C$ in $\C(y)$ there is a symbol $i$ in $\sinv(y_{n})$ which belongs to the class $C$ at time $n$; i.e., $i\in S_{n}(C)$.
\end{lem}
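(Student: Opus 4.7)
The plan is to derive this as an essentially immediate corollary of Lemma~\ref{nontransient} together with the finiteness of $\C(y)$ established in Theorem~\ref{thm:classsize}. Lemma~\ref{nontransient} applied to a single point $x \in X$ gives a threshold $m_x < \infty$ beyond which $x_n \in S_n[x]$; so for each class $C \in \C(y)$, fixing an arbitrary representative $x^{(C)} \in C$ produces a threshold $m_C$ such that for all $n \geq m_C$ the symbol $x^{(C)}_n$ lies in $\sinv(y_n)$ and belongs to $C$ at time $n$.

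The key observation is that by Theorem~\ref{thm:classsize} the set $\C(y)$ is finite, so we may form
\[
m := \max\{m_C : C \in \C(y)\},
\]
and this maximum is finite. For any $n \geq m$ and any $C \in \C(y)$, the symbol $i := x^{(C)}_n$ is in $\sinv(y_n)$ (since $\pi(x^{(C)}) = y$) and satisfies $i \in S_n(C)$ by the conclusion of Lemma~\ref{nontransient} applied to $x^{(C)}$.

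There is essentially no obstacle here beyond invoking the two preceding results correctly; the point of the lemma is simply to upgrade the pointwise statement of Lemma~\ref{nontransient} (one $m$ per point) to a uniform statement across all finitely many transition classes over $y$. The only subtlety worth noting is that Lemma~\ref{nontransient} produces $m_x$ such that $x_n \in S_n[x]$ for $n \geq m_x$, which is exactly the conclusion needed once we range over a representative of each class; no further argument about the structure of transitions between classes is required.
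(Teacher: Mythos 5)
Your proof is correct and is essentially identical to the paper's: both pick an arbitrary representative of each of the finitely many transition classes (finiteness via Theorem~\ref{thm:classsize}), apply Lemma~\ref{nontransient} to each to get a per-class threshold, and take the maximum. No further comment is needed.
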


\begin{proof}
Let $\C(y)=\{C_1,\dots,C_d\}$ for some finite $d$. Let
$A=\{x^{(1)},\dots,x^{(d)}\}$ be a set containing an arbitrary point
$x^{(i)}$ of $C_i$ for each $C_i$. By Lemma~\ref{nontransient} there is a $n_{C_i}<\infty$ such that for each
$n\geq n_{C_i}$ the symbol $x_n^{(i)}$ belongs to $C_i$ at time $n$. Let
$m=\sup\{n_{C_i}\colon 1\leq i\leq d\}$. Note that $m<\infty$. Moreover, for each
$n\geq m$ and $C_i$ the symbol $x^{(i)}_n$ belongs to the class $C_i$ at time $n$.
\end{proof}
\begin{prop}\label{prop:Ypi}
Let $\pi:X\ra Y$ be a $1$-block factor code from a 1-step SFT $X$ to
a sofic shift $Y$. Let $\nu$ be an invariant measure on $Y$. Then
$$
\nu\left(\{y\in Y\colon \forall C\in\C(y),\forall n\in\Z\text{ there is }
i\in\piinv_\text{b}(y_n)\text{ with } i\in S_n(C)\}\right)=1.
$$
\end{prop}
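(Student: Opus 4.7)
The plan is to extend Lemma~\ref{lem:S(C)}, which only produces the desired property of $y$ from some finite time onward, to all times by exploiting the $T$-invariance of $\nu$. Set
$$A_m=\{y\in Y\colon \forall C\in\C(y),\,\forall n\geq m,\,\exists i\in\piinv_\text{b}(y_n)\text{ with }i\in S_n(C)\},\qquad m\in\Z.$$
Then the set appearing in the proposition is exactly $A=\bigcap_{m\in\Z} A_m$, the $A_m$ are increasing in $m$, and Lemma~\ref{lem:S(C)} applied pointwise says $Y=\bigcup_{m\in\Z}A_m$.

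The heart of the argument is the shift-equivariance identity
$$A_{m+1}=T^{-1}A_m.$$
To see this, observe that the restriction of $T^{-1}$ to $\piinv(y)$ is a bijection onto $\piinv(T^{-1}y)$ that respects the relation $\sim$ and the relation $\to$ (any point $v$ witnessing $x\to x'$ over $y$ shifts to a witness of $T^{-1}x\to T^{-1}x'$ over $T^{-1}y$), so $\C(T^{-1}y)=\{T^{-1}C:C\in\C(y)\}$ and $C\to D$ iff $T^{-1}C\to T^{-1}D$. Since $(T^{-1}z)_{n+1}=z_n$, unpacking Definition~\ref{defn:belong} then yields $i\in S_n^y(C)$ iff $i\in S_{n+1}^{T^{-1}y}(T^{-1}C)$, and a reindexing $n\mapsto n+1$, $C\mapsto T^{-1}C$ gives $y\in A_m$ iff $T^{-1}y\in A_{m+1}$, as claimed.

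By $T$-invariance of $\nu$, the identity yields $\nu(A_m)=\nu(A_{m+1})$ for every $m$, so the increasing sequence $(A_m)$ has constant $\nu$-measure. Continuity from below together with $\bigcup_m A_m=Y$ forces this common value to be $\nu(Y)=1$. Consequently
$$\nu(Y\setminus A)\;=\;\nu\Bigl(\bigcup_{m\in\Z}(Y\setminus A_m)\Bigr)\;\leq\;\sum_{m\in\Z}\nu(Y\setminus A_m)\;=\;0,$$
so $\nu(A)=1$.

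The main obstacle I anticipate is verifying the Borel measurability of the sets $A_m$, since their definition quantifies over the $y$-dependent collection $\C(y)$. I would handle this by writing both ``$i$ appears in the $n$th coordinate of some point of class $D$'' and ``$C\to D$'' as projections of closed subsets of the compact space $X$ (so that they are themselves Borel in $y$), then using that $\C(y)$ has at most $\min_w|\piinv_\text{b}(w)|$ elements (Theorem~\ref{thm:classsize}) to reduce the quantification over $\C(y)$ to finitely many cases indexed by tuples of representative symbols.
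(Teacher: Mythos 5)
Your proof is correct and rests on the same mechanism as the paper's: the shift-equivariance of the ``first good time'' (the paper writes this as $m(Ty)=m(y)-1$ and shows the disjoint level sets $\{m(y)=k\}$ all have equal, hence zero, measure; you write it as $A_{m+1}=T^{-1}A_m$ and conclude the increasing sets $A_m$ have constant measure tending to $\nu(Y)=1$), combined with Lemma~\ref{lem:S(C)} and the $T$-invariance of $\nu$. The measurability point you raise is real but is also left implicit in the paper, and your sketch for handling it is reasonable.
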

\begin{proof}
Let $y$ in $Y$ and let $m(y)$ be the infimum of the set of $m$'s with the properties given in the statement of Lemma~\ref{lem:S(C)}. We show $$\nu\left(\{y\in Y\colon m(y)=-\infty\}\right)=1$$ which will imply the result immediately. Note that $C\in\C(y)$ if and only if $T(C)\in\C(T(y))$. This shows $m(T(y))=m(y)-1$. For $k<\infty$ let $A_k=\{y\in Y: m(y)=k\}$. We have $T(A_k)=A_{k-1}$. Since $\nu$ is $T$-invariant it follows that $\nu(A_k)=0$. Therefore $m(y)=-\infty$ for $\nu$-a.e. $y$ in $Y$.
\end{proof}

\begin{defn}
Let $X$ be a shift space. A point $x$ in $X$ is \textsf{recurrent} if for each block $B$ which occurs in $x$, $B$ occurs infinitely often in the positive coordinates of $x$.
\end{defn}

Note that given any invariant measure $\mu$ on $X$, $\mu$-almost every point of $X$ is recurrent.

\begin{thm}\label{thm:classdegree}
Let $\pi$ be a 1-block factor code from a 1-step SFT $X$ to a sofic shift $Y$. Let $y$ be a point of $Y$ and $d$ be the minimal depth of a transition block occurring in $y$. Then  $d\leq|\C(y)|$. Moreover, if $y$ is recurrent then $d=|\C(y)|$.
\end{thm}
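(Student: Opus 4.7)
The proof naturally splits into two inequalities: the upper bound $d \leq |\C(y)|$, which I plan to establish by constructing an explicit transition block of depth at most $|\C(y)|$; and the reverse $d \geq |\C(y)|$ under recurrence, which follows from a pigeonhole-and-gluing argument. Write $k = |\C(y)|$ and let $C_1, \ldots, C_k$ enumerate the transition classes.

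For $d \leq k$, I will apply Lemma~\ref{lem:S(C)} to select $m$ with the property that for every $n \geq m$, each class $C_i$ admits a symbol in $S_n(C_i)$. Fix $n_0 \geq m$, choose $a_i \in S_{n_0}(C_i)$ for each $i$, and set $M = \{a_1, \ldots, a_k\}$, so $|M| \leq k$. The candidate block $W = y_{[\alpha, \beta]}$ must be taken long enough---as a $y$-synchronizing extension of $y_{\{n_0\}}$ in the sense of Lemma~\ref{lem:exten}, further lengthened on both sides---that for every $U \in \piinv_\text{b}(W)$ the middle symbol $U_{n_0}$ is extendable to some $z \in \piinv(y)$. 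Such a $z$ lies in some class $C_i$, and using the existence (guaranteed by $a_i \in S_{n_0}(C_i)$) of $z^* \in C_i$ with $z^*_{n_0} = a_i$, together with the equivalence $z \sim z^*$, one manufactures $U' \in \piinv_\text{b}(W)$ with $U'_\alpha = U_\alpha$, $U'_\beta = U_\beta$, and $U'_{n_0} = a_i$, routing $U$ through $a_i$. The delicate part is here: Definition~\ref{defn:TB} demands routing \emph{every} $U \in \piinv_\text{b}(W)$, including those whose outer endpoints $U_\alpha, U_\beta$ are not themselves extendable to bi-infinite points of $\piinv(y)$. Handling these will require applying Lemma~\ref{lem:exten} at carefully placed sub-blocks near both $\alpha$ and $\beta$, and then using the fact that $a_i$ and $z_{n_0}$ reach the same set of classes to splice, within the 1-step graph of $X$, a path from $U_\alpha$ through $a_i$ to $U_\beta$.

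For $d \geq k$, assume for contradiction that $(W, n, M)$ is a transition block occurring in $y$ with $|M| = d < k$. Recurrence of $y$ gives a strictly increasing sequence $s_1 < s_2 < \cdots$ with $W$ occurring at $[s_j, s_j + p]$. Fix a representative $z^{(i)} \in C_i$ for each class. For every $j$, the block $z^{(i)}_{[s_j, s_j + p]}$ lies in $\piinv_\text{b}(W)$ and so is routable through some $\rho^{(i)}_j \in M$; substituting the routed block into $z^{(i)}$ produces a point $\tilde z^{(i,j)} \in \piinv(y)$ that agrees with $z^{(i)}$ outside $(s_j, s_j + p)$, whence Fact~\ref{fact:class} places $\tilde z^{(i,j)} \in C_i$. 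Passing to subsequences arranges $\rho^{(i)}_j = \rho^{(i)}$ constant in $j$; since $d < k$, the pigeonhole principle forces $\rho^{(i)} = \rho^{(i')} =: \rho$ for some $i \ne i'$. Define $w^{(j)}$ to equal $\tilde z^{(i,j)}$ on $(-\infty, n + s_j]$ and $\tilde z^{(i',j)}$ on $[n + s_j, \infty)$; this is consistent at $n + s_j$ (where both equal $\rho$) and valid because $X$ is 1-step. Since $w^{(j)}$ matches $z^{(i)}$ on $(-\infty, s_j - 1]$ and $z^{(i')}$ on $[s_j + p + 1, \infty)$, any integer $N$ is handled by picking $j$ with $s_j > N$, making $w^{(j)}$ witness $z^{(i)} \to z^{(i')}$. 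The symmetric gluing yields $z^{(i')} \to z^{(i)}$, forcing $C_i = C_{i'}$, a contradiction.
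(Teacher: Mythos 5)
Your argument for the reverse inequality $d\geq|\C(y)|$ under recurrence is correct and is exactly the adaptation of the proof of Theorem~\ref{thm:symboloccurs} that the paper invokes: substitute the routed block at each occurrence of $W$, stabilize the routing symbols by pigeonhole, and glue two representatives at the common symbol $\rho$ to contradict their inequivalence. The problem is in the first half. The step ``one manufactures $U'\in\piinv_\text{b}(W)$ with $U'_\alpha=U_\alpha$, $U'_\beta=U_\beta$ and $U'_{n_0}=a_i$'' is precisely the hard content of the theorem, and the ingredients you cite do not deliver it. The relation $z\sim z^*$ only produces, for each $n$, a preimage of $y$ agreeing with $z$ on $(-\infty,n]$ and with $z^*$ from some \emph{uncontrolled} later time onward; it gives no block that passes through the specific symbol $a_i$ at the specific time $n_0$, let alone one that also returns to $U_\beta$ at time $\beta$. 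Moreover your $z$ shares only the single coordinate $U_{n_0}$ with $U$, so nothing ties the manufactured route to the endpoints $U_\alpha,U_\beta$ --- and $U_{n_0}$ is exactly the coordinate you intend to change, so the one piece of synchronizing information you do have is discarded.

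The missing idea is uniformity, obtained by compactness of $\piinv(y)$ \emph{before} the block $W$ is chosen. The paper's proof fixes, in order: a window $[n_1,n_2]$ in which every preimage of $y$ has a non-transient coordinate (compactness); a later time $n_3$ and symbols $a_e\in S_{n_3}(C_e)$ reachable by finite blocks from every symbol of $S_{n_2}(C_e)$; and a time $n_4$ by which every rerouted preimage has returned to the original point (compactness again). Only then is $W$ taken to be a $y$-synchronizing extension of the \emph{whole} window $y_{[n_1,n_4]}$, so that the middle of any $U\in\piinv_\text{b}(W)$ coincides on $[n_1,n_4]$ with an actual point $x\in\piinv(y)$; the reroute replaces $x$ only on the open interval $(n_1,n_4)$ and is glued back onto $U$ at $n_1$ and $n_4$, leaving $U$'s outer symbols untouched. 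In your scheme the order of quantifiers is reversed --- you fix $n_0$ and the $a_i$ first and then hope to lengthen $W$ --- but the required length of $W$ on each side of the routing time depends on uniform bounds you never establish, and lengthening $W$ changes the set of blocks that must be routed. Without the three compactness/uniformity stages the construction does not close up.
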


\begin{proof}
We show $d\leq|\C(y)|$ by finding a transition block of depth $|\C(y)|$ occurring in $y$. We construct such a transition block in the following 4 stages. Let $h=|\C(y)|$ and suppose
$\C(y)=\{C_1,\dots,C_h\}$. Choose an integer
$n_1$ satisfying the properties given in the statement of Lemma~\ref{lem:S(C)}.

\noindent\textbf {Stage {$\emph 1.$}} We claim there is $n_2\in [n_1,\infty)$ such that
for each $x\in\pi^{-1}(y)$, there is $n_1\leq t\leq
n_2$ so that $x_t$ is not transient.

\begin{proof} Suppose there is no such $n_2$. It follows that for each $j\geq n_1$ there is a point $x^{(j)}$ in $\piinv(y)$ such that $x_l^{(j)}$ is transient for all $n_1\leq l\leq j$. Consider the sequence $(x^{(j)})_{j\in\Z}$, and let $x$ be the limit of a convergent subsequence of it. Clearly $x\in\piinv(y)$. However, $x_l$ is transient for each $l\geq n_1$, contradicting Lemma~\ref{nontransient}.
\end{proof}

\noindent\textbf {Stage {$\emph 2.$}} We claim there is $n_3\in [n_2,\infty)$ and a set of symbols $$M'=\{a_1,\dots,a_h\}$$ with
$a_e\in S_{n_3}(C_e)$ such that for each $i$ in $S_{n_2}(C_e)$ there is a
block $$U\in\sinv(y_{n_2
}y_{{n_2}+1}\dots y_{{n_3}-1}y_{n_3})$$ which begins with $i$ and ends with $a_e$. See Figure~\ref{fig:n2}. 
\begin{figure}[h]
\centering{
\psfrag{$C_1$}{$C_1$}
\psfrag{$C_2$}{$C_2$}
\psfrag{$C_3$}{$C_3$}
\psfrag{$i_1$}{$i_1$}
\psfrag{$i_2$}{$i_2$}
\psfrag{$i_3$}{$i_3$}
\psfrag{$i_4$}{$i_4$}
\psfrag{$i_5$}{$i_5$}
\psfrag{$i_6$}{$i_6$}
\psfrag{$a_1$}{$a_1$}
\psfrag{$a_2$}{$a_2$}
\psfrag{$a_3$}{$a_3$}
\psfrag{$n_0$}{$n_1$}
\psfrag{$n_1$}{$n_2$}
\psfrag{$n_2$}{$n_3$}
\psfrag{$y$}{$y$}
\psfrag{p}{$\pi$}
\includegraphics{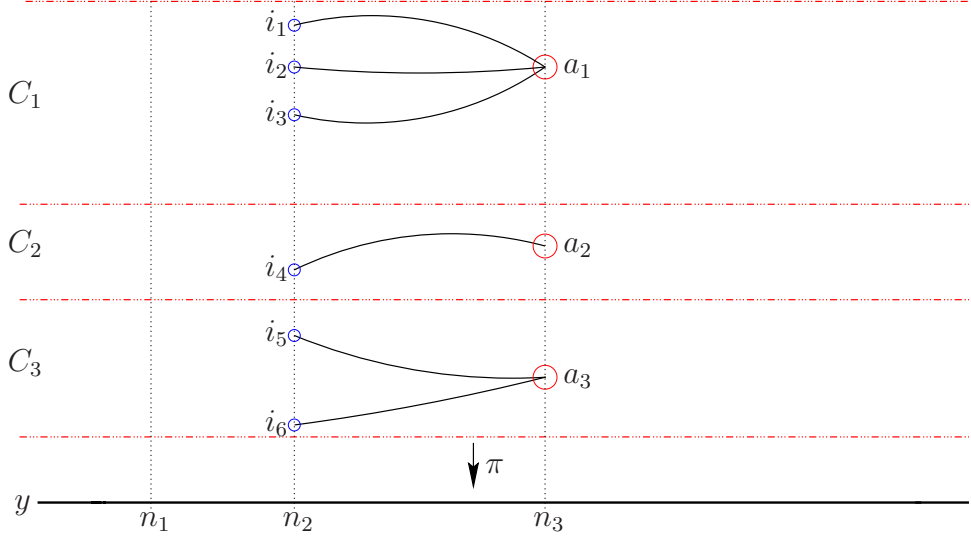}
\caption{An example illustrating Stage 2. $\C(y)=\{C_1,~C_2,~C_3\}$, $i_1,\,i_2,\,i_3\in S_{n_2}(C_1)$, $i_4\in S_{n_2}(C_2)$, $i_5,\,i_6\in S_{n_2}(C_3)$, and $M'=\{a_1,~a_2,~a_3\}$.}
\label{fig:n2}}
\end{figure}
\begin{proof}Let $C_e$ be a transition class in $\C(y)$ and $x^e$ be a point in $C_e$. For each symbol $i$ in $S_{n_2}(C_e)$ (non-empty by the choice of $n_1$), there is a bi-infinite sequence
$z^i$ in the class $C_e$ with $z^i_{n_2}=i$ such that $z^i$ matches $x^e$ from some
time $k^i\in [n_2,\infty)$ onwards. Let $$k^e=\max\{k^i: i\in
S_{n_2}(C_e)\},$$ and $$n_3=\max\{k^e: C_e\in\C(y)\}.$$ Note that $n_3<\infty$. Rename
$x_{n_3}^e$ for each $C_e\in\C(y)$ as $a_e$, and let $M'=\{a_{1},\dots,
a_{h}\}$.
\end{proof}

\noindent\textbf {Stage {$\emph 3.$}} We claim there is $n_4\in [n_3,\infty)$ such that for each point $x$ in $\pi^{-1}(y)$ there is a point $x'$ in $\pi^{-1}(y)$ so that $x'_r=x_r$ for every $r\in (-\infty,n_1]\cup [n_4,\infty)$, and $x'_{n_3}\in M'$. See
Figure~\ref{fig:n3}.

\begin{figure}[h]
\centering{
\psfrag{$C_1$}{$C_1$}
\psfrag{$C_2$}{$C_2$}
\psfrag{$C_3$}{$C_3$}
\psfrag{$x$}{$x$}
\psfrag{$z$}{$z$}
\psfrag{$u$}{$u$}
\psfrag{$x'$}{$x'$}
\psfrag{$z'$}{$z'$}
\psfrag{$u'$}{$u'$}
\psfrag{$a_1$}{$a_1$}
\psfrag{$a_2$}{$a_2$}
\psfrag{$a_3$}{$a_3$}
\psfrag{$n_0$}{$n_1$}
\psfrag{$n_1$}{$n_2$}
\psfrag{$n_2$}{$n_3$}
\psfrag{$n_3$}{$n_4$}
\psfrag{$y$}{$y$}
\psfrag{p}{$\pi$}
\includegraphics{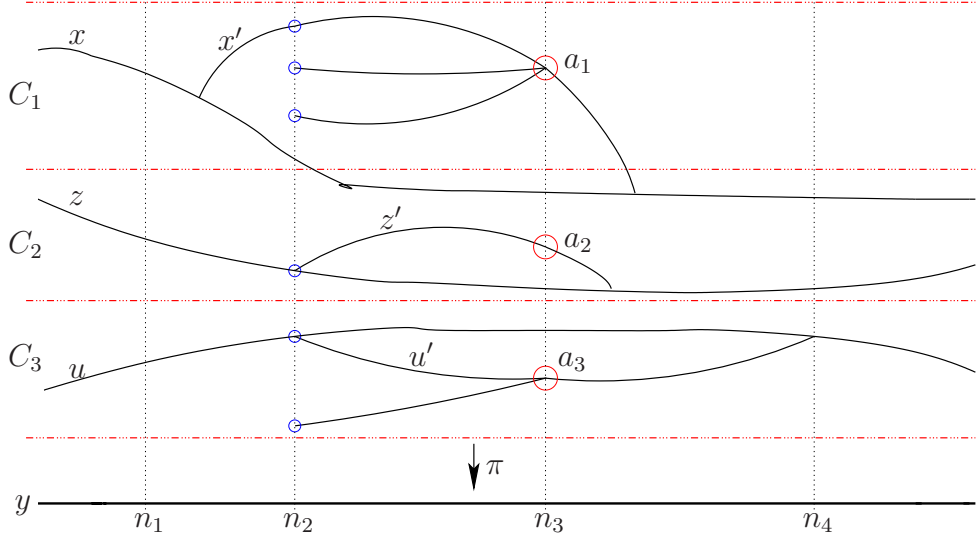}
\caption{Graph for Stage 3. $M'=\{a_1,\,a_2,\,a_3\}$. $x,x'\in\piinv(y)$, $x'_r=x_r$ for each $r\in (-\infty,n_1]\cup [n_4,\infty)$, and $x'_{n_3}\in M'$. Same for $z,\,z'$ and $u,\,u'$.}
\label{fig:n3}}
\end{figure}
\begin{proof}
Let $x$ be a point in $\piinv(y)$. By stage 1, there is $n_1\leq t\leq n_2$ such
that $x_t$ is not transient; i.e., $x_t$ belongs to some transition class $C_e$ at time $t$. It follows that there is a point $u$ in the class $C_e$ with $u_t=x_t$. The new block 
\begin{equation}\label{eq:firstblock}
\dots x_{n_1}\dots x_tu_{t+1}\dots u_{n_2}
\end{equation}
 is an allowable block of $X$. Moreover, this block maps to the block $$\dots y_t\dots
y_{n_2}.$$ Note that the symbol $u_{n_2}$ belongs to the class $C_e$ at time $n_2$. So by Stage 2, there is a block starting at
$u_{n_2}$ ending at $a_e$ which maps to $$y_{n_2}\dots y_{n_3}.$$ Denote this block by \begin{equation}\label{eq:secblock}
u_{n_2}v_{n_2+1}\dots
v_{n_3-1}a_e.
\end{equation}
Connect blocks in Equations~\eqref{eq:firstblock} and~\eqref{eq:secblock} at $u_{n_2}$ to get the following allowable block of $X$
\begin{equation*}
\dots x_{n_1}\dots
x_tu_{t+1}\dots u_{n_2}v_{n_2+1}\dots v_{n_3-1}a_e
\end{equation*}
Observe that
having $x_t$ in $S_t(C_e)$ implies that $x$ must belong to a transition class
$C_f$ with $C_e\ra C_f$. On the other hand having $a_e$ in $S_{n_3}(C_e)$ implies that there is a point $b$ in $C_e$ with
$$b_{(-\infty,\,n_3]}=\dots x_{n_1}\dots x_tu_{t+1}\dots
u_{n_2}v_{n_2+1}\dots v_{n_3-1}a_e$$ which matches $x$ from some time
$j$ onwards for some $n_3\leq j <\infty$. Let
$$n_x=\min_{n_3\leq j<\infty}\{j: \exists x'\in \pi^{-1}(y) \textrm{ with }
x'_r=x_r \forall r\in (-\infty,n_1]\cup [j,\infty),\,~x'_{n_3}=a_e\}.$$
We claim that there
is $n^e<\infty$ such that for each $x\in \pi^{-1}(y)$ with $x_t\in
S_t(C_e)$ for some $n_1\leq t\leq n_2$, there exists a point $x'\in
\pi^{-1}(y)$ with $x'_r=x_r$ for each $r\in (\infty,n_1]\cup [n^e,\infty)$, and $x'_{n_3}=a_e$. Then
letting $$n_4=\max\{n^e: C_e\in\C(y)\}<\infty$$ will complete the proof of Stage 3.

Suppose, for a contradiction, that there does not exist such an $n^e$. Then there is
a sequence $x^{(l)}$ with $x^{(l)}_{t_l}\in S_{t_l}(C_e)$ for some $n_1\leq
t_l\leq n_2$, such that $\lim_{l\to\infty} n_{x^{(l)}}=\infty$. Let $x^*$
be the limit of a convergent subsequence of $x^{(l)}$. Clearly $x^*$ is in 
$\pi^{-1}(y)$ and $n_{x^*}=\infty$ which contradicts the fact that for each $x$ in $\piinv(y)$, $n_x<\infty$. 
\end{proof}
{\bf Stage {\emph 4.}} Let $W$ be a $y$-synchronizing extension of the block $$V=y_{n_1}\dots y_{n_4}$$ occurring in $[n_0,n_5]$ for some $n_0\leq n_1$ and $n_5\geq n_4$. We claim $(W,n_3-n_0,M')$ is a transition block
of depth $|\C(y)|$.
\begin{proof}
Let $U\in\sinv(W)$. By the definition of $y$-synchronizing extension, there is $x$ in $\piinv(y)$ with $x_{[n_1,\,n_4]}=U$. By Stages 1, 2, and 3 there is a point
$x'$ in $\piinv(y)$ with $x'_r=x_r$ for each $r\in (-\infty,n_1]\cup[n_4,\infty)$, and $x'_{n_3}\in M'$. Having
$x'_{n_1}=x_{n_1}$, $x'_{n_3}\in M'$,  $x'_{n_4}=x_{n_4}$ and $\s(x'_{[n_1,\,n_4]})=V$ simply means that
$U$ is routable through a symbol of $M'$ at time $n_3-n_0$. Since
$U$ in $\sinv(W)$ is arbitrary it follows that $(W,\,n_3-n_0,\,M')$ is a transition block. Because $M'$ has $|\C(y)|$ elements it follows that $W$ is a transition block of depth $|\C(y)|$.
\end{proof}
Now suppose $y$ is recurrent. Then a transition block of depth $d$ which occurs in $y$, it occurs infinitely often in the positive coordinates of $y$. Apply the same method which was used in the proof of Theorem~\ref{thm:symboloccurs} to see $d\geq |\C(y)|$. Since by the first part of the theorem $d\leq |\C(y)|$ we obtain $d=|\C(y)|$ when $y$ is recurrent.
\end{proof}
\begin{cor}\label{cor:classdegree}
Let $\pi:X\ra Y$ be a factor code from a SFT $X$ to an
irreducible sofic shift $Y$. Then
\begin{enumerate}[(a)]
\item $c_\pi=c^*_\pi$.
\item There are exactly $c_{\pi}$ transition classes over every right transitive point of $Y$.
\item Given an ergodic measure $\nu$ on $Y$, $\nu$-almost every point of $y$ has the same number of transition classes over it. We call this number $|\C(\nu)|$.
\end{enumerate}
\end{cor}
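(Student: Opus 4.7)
The three parts will all be derived from Theorem~\ref{thm:classdegree}, which says that for any $y\in Y$, if $d(y)$ denotes the minimal depth of a transition block $(W,n,M)$ whose block $W$ occurs in $y$, then $d(y)\le|\C(y)|$, with equality when $y$ is recurrent. My plan is to establish (b) first, then deduce (a) from it, and finally prove (c) by an ergodicity argument.

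For part (b), let $y$ be a right transitive point of $Y$, meaning every block of $Y$ appears in the positive coordinates of $y$. In particular, every block of $Y$ occurs in $y$, so every transition block of $\pi$ has its $Y$-block part occurring in $y$; hence $d(y)=c^*_\pi$. A right transitive point is recurrent, so Theorem~\ref{thm:classdegree} gives $|\C(y)|=d(y)=c^*_\pi$. This number does not depend on the choice of right transitive point.

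For part (a), I first show $c^*_\pi\le c_\pi$: for an arbitrary $y\in Y$ we have $c^*_\pi\le d(y)\le|\C(y)|$ by the definition of $c^*_\pi$ and Theorem~\ref{thm:classdegree}; taking the infimum over $y$ gives $c^*_\pi\le c_\pi$. For the reverse inequality, since $Y$ is irreducible sofic it has right transitive points, so by (b) there exists $y$ with $|\C(y)|=c^*_\pi$, hence $c_\pi\le c^*_\pi$. Combining gives $c_\pi=c^*_\pi$.

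For part (c), the plan is to recognise $|\C(\cdot)|$ as a $T$-invariant measurable function on a full-measure set and then invoke ergodicity. Consider the function $y\mapsto d(y)$. Since $c_\pi=c^*_\pi$ is finite and $d(y)$ only depends on which $Y$-blocks occur in $y$, we can write
\[
\{y\in Y\colon d(y)\le k\}=\bigcup_{(W,n,M)\colon |M|\le k}\{y\in Y\colon W\text{ occurs in }y\},
\]
where the union ranges over transition blocks of depth at most $k$. Each set $\{y\colon W\text{ occurs in }y\}=\bigcup_{j\in\Z}T^{-j}[W]$ is Borel, so $d(\cdot)$ is Borel measurable; moreover the set of blocks appearing in $y$ is manifestly $T$-invariant, hence so is $d$. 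By ergodicity of $\nu$, $d$ is $\nu$-a.e.\ equal to a constant, call it $k_0$. Finally, $\nu$-a.e.\ point of $Y$ is recurrent, so by Theorem~\ref{thm:classdegree} we have $|\C(y)|=d(y)=k_0$ for $\nu$-a.e.\ $y$, which is the required constant value $|\C(\nu)|$.

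The only delicate point is the measurability step in (c), since $|\C(\cdot)|$ itself is not defined by an obviously measurable condition; routing everything through the block-theoretic quantity $d(\cdot)$, which Theorem~\ref{thm:classdegree} identifies with $|\C(\cdot)|$ on recurrent points, sidesteps this issue cleanly. The rest is bookkeeping.
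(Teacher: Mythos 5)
Your proposal is correct and follows essentially the same route as the paper: all three parts are read off from Theorem~\ref{thm:classdegree} by noting that every transition block occurs in a right transitive point (giving (a) and (b)) and that $\nu$-a.e.\ point is recurrent (giving (c)). The only cosmetic difference is in (c), where you make the ergodicity step explicit by checking that $y\mapsto d(y)$ is a Borel, $T$-invariant function, whereas the paper instead fixes a transition block of minimal depth among those of positive $\nu$-measure; the two arguments are interchangeable.
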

\begin{proof}
 Let $(W,n,M)$ be a transition block of $\pi$ of depth $c^*_\pi$ and let $y$ be a right transitive point of $Y$. Notice that $W$ occurs infinitely often in the positive coordinates of $y$. Then (a) and (b) are clear by Theorem~\ref{thm:classdegree}.

Now let $\nu$ be an ergodic measure on $Y$. Let $(W',n',M')$ be a transition block of $\pi$ with $\nu[W']>0$ such that $$|M'|=\min\{|\wt M|\colon (\wt W,\wt n,\wt M)\text{ is a transition block of }\pi,\,\nu[\wt W]>0\}.$$ Note that $W'$ occurs infinitely often in the positive coordinates of $\nu$-almost every point of $Y$. Thus (c) is obtained as a direct corollary of Theorem~\ref{thm:classdegree}. 
\end{proof}

We now show that for a
finite-to-one factor code, the degree and the class degree of
the code are the same.

\begin{thm}\label{thm:finitemagic}
Let $\pi:X\ra Y$ be a finite-to-one factor code from a SFT to an
irreducible sofic shift $Y$. Then $c_\pi=d_\pi$.
\end{thm}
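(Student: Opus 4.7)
The plan is to reduce to the magic-symbol setting and then sandwich $|\C(y)|$ between $d_\pi$ and $d_\pi$ at a doubly transitive point $y$ of $Y$.

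Since both $d_\pi$ (Corollary~\ref{cor:conjdeg}) and $c_\pi$ (Proposition~\ref{prop:conjinv}) are conjugacy invariants of factor triples, Theorem~\ref{thm:magicsymbol} lets me assume without loss of generality that $\pi$ is a 1-block code from a 1-step SFT $X$ and that $\pi$ possesses a magic symbol $w$. Proposition~\ref{prop:degreemagic} then gives $d_\pi = |\sinv(w)|$. Now fix a doubly transitive point $y \in Y$; by Theorem~\ref{thm:samenumber} its fibre consists of exactly $d_\pi$ points $x^{(1)},\dots,x^{(d_\pi)}$, and by Proposition~\ref{prop:degreemagic} these preimages are mutually separated.

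For the upper bound, since $y$ is transitive the symbol $w$ occurs infinitely often in the positive coordinates of $y$, so Theorem~\ref{thm:symboloccurs} yields
\[
|\C(y)| \le |\sinv(w)| = d_\pi.
\]
For the matching lower bound, I claim the $d_\pi$ preimages $x^{(1)},\dots,x^{(d_\pi)}$ lie in distinct transition classes. Suppose for contradiction that $x^{(i)} \sim x^{(j)}$ with $i \ne j$. By Definition~\ref{defn:equiv} (applied to a sufficiently large $n$) there exists $v \in \piinv(y)$ with $v_{-\infty}^n = x^{(i)}_{-\infty}^n$ and $v_k^\infty = x^{(j)}_k^\infty$ for some $k \ge n$. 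But $v$ is a preimage of the transitive point $y$, hence $v \in \{x^{(1)},\dots,x^{(d_\pi)}\}$. The equation $v_n = x^{(i)}_n$ together with mutual separation forces $v = x^{(i)}$, and similarly $v_k = x^{(j)}_k$ forces $v = x^{(j)}$, contradicting $i \ne j$. Therefore $|\C(y)| \ge d_\pi$.

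Combining the two bounds gives $|\C(y)| = d_\pi$. Since $y$ is doubly transitive it is in particular right transitive, so Corollary~\ref{cor:classdegree}(b) identifies $|\C(y)|$ with $c_\pi$, yielding $c_\pi = d_\pi$. I do not anticipate a serious obstacle: the key technical ingredients (the magic-symbol reduction, mutual separation of preimages over a transitive point, and the right-transitive-point characterization of $c_\pi$) are all already in place, and the heart of the argument is the short incompatibility observation that mutual separation rules out any transition between two distinct preimages of a doubly transitive point.
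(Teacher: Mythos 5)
Your proof is correct and follows essentially the same route as the paper's: recode to a 1-block code, observe that mutual separation of the preimages of a transitive point forces each transition class to be a singleton, and identify the resulting count with $c_\pi$ via Corollary~\ref{cor:classdegree}(b). The only difference is cosmetic: your separate upper bound via the magic symbol and Theorem~\ref{thm:symboloccurs} is redundant, since once every class is a singleton the count $|\C(y)|=|\piinv(y)|=d_\pi$ already follows from Theorem~\ref{thm:samenumber}.
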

\begin{proof}
Corollary~\ref{cor:conjdeg} and Proposition~\ref{prop:conjinv}(b) imply that the degree of a code and the class degree of a code are both
invariant under conjugacy of factor triples. So by recoding, without loss of generality, we may assume $X$ is a 1-step SFT and $\pi$ is a 1-block factor code. Let $y$ be a transitive point of $Y$. By Proposition~\ref{prop:degreemagic} the preimages of $y$ are mutually separated. It follows that each transition class above $y$ has only one point. Therefore, $c_\pi=|\piinv(y)|=d_\pi$.
\end{proof}

\section{Bounding the number of ergodic measures of relative maximal entropy}
In Section~\ref{sec:unif} we mentioned that although every 1-dimensional
irreducible SFT has a unique ergodic measure (Parry measure) of maximal entropy,
there can be more than one ergodic measure of relative maximal entropy over an ergodic measure; i.e., given a factor triple $(X,Y,\pi)$ and an ergodic measure $\nu$ on $Y$, there can exist more than one ergodic measure on $X$ that projects to $\nu$ under $\pi$ and has maximal entropy among measures in the fibre $\piinv\{\nu\}$, see \cite[Example~3.3]{pqs}. In this
section we show that the number of such measures over a fully supported ergodic measure can be no more than the class degree.

\begin{thm}\label{thm:final}
Let $(X,Y,\pi)$ be a factor triple and $\nu$ be a fully supported ergodic measure on $Y$. The number of ergodic measures of relative maximal entropy over $\nu$ is at most $c_\pi$.
\end{thm}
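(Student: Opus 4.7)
The plan is to bound the number of ergodic measures of relative maximal entropy over $\nu$ in three stages. The first stage establishes that $|\C(y)| = c_\pi$ for $\nu$-a.e.\ $y$: since $\nu$ is fully supported, every $Y$-block (and in particular the first coordinate of any transition block $(W,n,M)$) has positive $\nu$-measure, so the minimal depth over transition blocks $(W,n,M)$ with $\nu[W] > 0$ coincides with $c^*_\pi = c_\pi$ by Corollary~\ref{cor:classdegree}(a); Corollary~\ref{cor:classdegree}(c) then yields $|\C(y)| = c_\pi$ for $\nu$-a.e.\ $y$.

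The main content, and the principal obstacle, is to establish the joining statement foreshadowed in the introduction: for any two distinct ergodic measures $\mu_1, \mu_2$ of relative maximal entropy over $\nu$, the relatively independent joining $\mu_1 \times_\nu \mu_2$ assigns measure zero to $E := \{(u,v)\in X \times X : \pi(u) = \pi(v)\text{ and } u \sim v\}$. My plan is to argue by contradiction: if $(\mu_1 \times_\nu \mu_2)(E) > 0$, then on a $\nu$-positive set of $y$ there is a transition class $C \subseteq \pi^{-1}(y)$ with $\mu_1^y(C), \mu_2^y(C) > 0$. Theorem~\ref{CUniformDist} applied to both $\mu_1$ and $\mu_2$ tells us that the disintegrations $\mu_i^y$ share the same uniform conditional distribution on every finite window given the exterior. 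Using the routing provided by a minimal transition block (Corollary~\ref{cor:classdegree}(a)) together with the belonging relation of Lemma~\ref{lem:S(C)}, one can then show that the normalised restrictions $\mu_i^y(\,\cdot\,|\,C)$ agree for $i=1,2$, depending only on $y$ and $C$, since on a single transition class the uniform specification is essentially ``irreducible''. Once this common conditional measure is isolated, a Burton--Steif-style entropy swap in the spirit of the proof of Theorem~\ref{CUniformDist}, performed along configurations witnessing $u \sim v$, either strictly improves the entropy (contradicting maximality) or collapses $\mu_1 = \mu_2$. The delicate part is to convert the transition-class equivalence, which is a tail notion, into enough finite-window information to invoke uniqueness on a single class.

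Granting the joining statement, the conclusion follows by pigeonhole. Let $\mu_1, \dots, \mu_k$ be distinct ergodic measures of relative maximal entropy over $\nu$ and write $\mu_i = \int \mu_i^y \, d\nu(y)$. For each pair $i \neq j$, since $E \cap (\pi^{-1}(y) \times \pi^{-1}(y)) = \bigsqcup_{C \in \C(y)} C \times C$,
\begin{equation*}
0 = (\mu_i \times_\nu \mu_j)(E) = \int \sum_{C \in \C(y)} \mu_i^y(C)\, \mu_j^y(C) \, d\nu(y),
\end{equation*}
so the sets $S_i(y) := \{C \in \C(y) : \mu_i^y(C) > 0\}$ are pairwise disjoint for $\nu$-a.e.\ $y$. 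Each $S_i(y)$ is nonempty since $\mu_i^y$ is a probability measure supported on $\pi^{-1}(y)$, so $k \leq |\C(y)| = c_\pi$ for $\nu$-a.e.\ $y$, which is exactly the desired bound.
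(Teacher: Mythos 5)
Your overall architecture is the same as the paper's: reduce to the statement that for distinct ergodic measures $\mu_1,\mu_2$ of relative maximal entropy the relatively independent joining gives zero mass to $\{(u,v)\colon u\sim v\}$, and then conclude by pigeonhole using $|\C(y)|=c_\pi$ for $\nu$-a.e.\ $y$ (your first and third stages are correct, and your disjoint-supports formulation of the pigeonhole is a clean equivalent of the paper's $n$-fold joining argument).

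The genuine gap is in your second stage, which you yourself flag as ``the principal obstacle'' and then do not close. Your plan is to show that if some transition class $C$ over $y$ carries positive mass for both disintegrations, then the normalised restrictions $\mu_1^y(\cdot\mid C)$ and $\mu_2^y(\cdot\mid C)$ coincide because ``the uniform specification is essentially irreducible'' on a single class, and then to run an entropy swap that either beats maximality or forces $\mu_1=\mu_2$. Neither step is justified: Theorem~\ref{CUniformDist} only controls conditional probabilities on finite windows given the full exterior configuration together with the $Y$-fibre, and passing from that to uniqueness of the conditional measure on a whole transition class (a tail-defined, infinite object) is a substantially stronger statement than anything proved in the paper --- it is essentially a relative uniqueness theorem, not a routine consequence. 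The paper avoids this entirely: in Theorem~\ref{equivagonal} it observes that if $(\mu_1\otimes\mu_2)\{u\sim v\}>0$ then on a set of positive joining measure the central blocks $u_{[-k,k]}=A$ and $v_{[-k,k]}=B$ are both \emph{routable through a common symbol} $a$ at time $0$; by Theorem~\ref{CUniformDist} the conditional expectations of $[A]$ and of the rerouted cylinder $[A']$ (with $A'_0=a$) given $\piinv\Balg_Y$ agree, likewise for $[B]$ and $[B']$, so $(\mu_1\otimes\mu_2)([A]\times[B])=(\mu_1\otimes\mu_2)([A']\times[B'])$, which vanishes by Theorem~\ref{orthogonal} (Theorem~1 of \cite{pqs}) since $A'_0=B'_0=a$. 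Your proposal never invokes Theorem~\ref{orthogonal}, which is the essential external input; without it (or a complete proof of your uniqueness-on-a-class claim) the argument does not go through. To repair your proof, replace the second stage with this rerouting reduction to Theorem~\ref{orthogonal}.
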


Since entropy is a conjugacy invariant the following observation follows
immediately.
\begin{obs}\label{obs:numberofMRME}
Let $(X,Y,\pi)$ and $(\wt X,\wt Y,\tilde\pi)$ be conjugate factor triples. Let $\nu$ be an ergodic measure on $Y$ and $\tilde\nu$ be its corresponding ergodic measure on $\wt Y$. The number of ergodic measures of relative maximal entropy over $\nu$ is the same
as the number of ergodic measures of relative maximal entropy over $\tilde\nu$.
\end{obs}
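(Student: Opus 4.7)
The plan is to exhibit an explicit bijection between the set of ergodic measures of relative maximal entropy over $\nu$ and the corresponding set over $\tilde\nu$, using the conjugacy maps. Let $\phi:X\to\wt X$ and $\psi:Y\to\wt Y$ be the conjugacies witnessing $(X,Y,\pi)\cong(\wt X,\wt Y,\wt\pi)$, so $\wt\pi\circ\phi=\psi\circ\pi$. By ``corresponding'' measure we mean $\tilde\nu=\psi_*\nu$, and push-forward under $\phi$ will be the candidate bijection at the upstairs level.

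First I would check that push-forward by $\phi$ carries the fibre $\pi^{-1}\{\nu\}$ onto the fibre $\wt\pi^{-1}\{\tilde\nu\}$. For any $T_X$-invariant measure $\mu$ on $X$ with $\pi_*\mu=\nu$, the measure $\phi_*\mu$ is $T_{\wt X}$-invariant (since $\phi$ intertwines the shifts) and satisfies
\[
\wt\pi_*(\phi_*\mu)=(\wt\pi\circ\phi)_*\mu=(\psi\circ\pi)_*\mu=\psi_*\nu=\tilde\nu,
\]
so $\phi_*\mu$ lies in $\wt\pi^{-1}\{\tilde\nu\}$. Ergodicity is preserved because $\phi$ is a measurable conjugacy of the dynamical systems $(X,T_X)$ and $(\wt X,T_{\wt X})$. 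The same argument applied to $\phi^{-1}$ gives a two-sided inverse for this assignment, so push-forward by $\phi$ is a bijection between the ergodic parts of the two fibres.

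Next I would invoke the standard fact that entropy is a conjugacy invariant: $h_{\phi_*\mu}(T_{\wt X})=h_\mu(T_X)$ for every $T_X$-invariant $\mu$. Consequently the supremum of $h_\mu(T_X)$ over $\mu\in\pi^{-1}\{\nu\}$ equals the supremum of $h_{\mu'}(T_{\wt X})$ over $\mu'\in\wt\pi^{-1}\{\tilde\nu\}$, and $\mu$ realizes the former if and only if $\phi_*\mu$ realizes the latter. Combining this with the bijection from the previous paragraph shows that $\phi_*$ restricts to a bijection between the ergodic measures of relative maximal entropy over $\nu$ and those over $\tilde\nu$, giving the desired equality of cardinalities.

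There is essentially no obstacle here; the only thing to be careful about is pinning down what ``corresponding measure'' means (namely $\tilde\nu=\psi_*\nu$, which is automatically ergodic since $\psi$ is a conjugacy), and noting that the argument never uses any property of $\pi$ beyond being a factor code, so it applies verbatim to the relative setting. The entire proof reduces to the functoriality of push-forward together with conjugacy-invariance of entropy, which is why the statement is recorded as an observation rather than a theorem.
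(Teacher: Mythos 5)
Your proof is correct and is exactly the argument the paper has in mind: the paper simply records this as an immediate consequence of entropy being a conjugacy invariant, and your write-up spells out the push-forward bijection $\mu\mapsto\phi_*\mu$ between the fibres that makes this precise. No discrepancy.
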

In 2003, Petersen, Quas, and Shin \cite{pqs} found an upper
bound on the number of measures of relative maximal entropy. Let $N_\nu(\pi)$ denote the minimum number of symbols in $\piinv_\text{b}(w)$ as $w$ runs over the symbols in $\A(Y)$ for which $\nu[w]>0$.
\begin{thm}\cite[Corollary 1]{pqs}\label{pqs}
Let $\pi:X\ra Y$ be a 1-block code from a 1-step SFT $X$ to a sofic shift
$Y$. Let $\nu$ be an ergodic measure on $Y$. The number of
ergodic measures of maximal entropy over $\nu$ is at
most $N_\nu(\pi)$.
\end{thm}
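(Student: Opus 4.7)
The plan is to follow the strategy outlined in the introduction: reduce the desired bound to a ``joining'' statement about distinct ergodic measures of relative maximal entropy, and then turn this into a counting argument using the class-degree machinery. By Theorem~\ref{thm:magicsymbol}, Proposition~\ref{prop:conjinv}(b) and Observation~\ref{obs:numberofMRME}, I may first recode and assume $\pi$ is a 1-block factor code from a 1-step SFT. Let $\mu_1,\ldots,\mu_k$ be the distinct ergodic measures of relative maximal entropy over $\nu$, disintegrate them as $\mu_i=\int\mu_{i,y}\,d\nu(y)$, and write $\mathcal{T}_i(y):=\{C\in\C(y):\mu_{i,y}(C)>0\}$.

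The central step, which I expect to be the main obstacle, is the joining statement advertised in the introduction: for $i\ne j$,
\[
(\mu_i\times_\nu\mu_j)\bigl(\{(u,v)\in X\times X:u\sim v\}\bigr)=0.
\]
Since $\{u\sim v\}\cap(\pi^{-1}(y)\times\pi^{-1}(y))=\bigsqcup_{C\in\C(y)}C\times C$, the left-hand side decomposes as $\int_Y\sum_{C\in\C(y)}\mu_{i,y}(C)\mu_{j,y}(C)\,d\nu(y)$, so the statement is equivalent to $\mathcal{T}_i(y)\cap\mathcal{T}_j(y)=\emptyset$ for $\nu$-a.e.\ $y$. My plan is to argue by contradiction, mirroring the proof of Theorem~\ref{CUniformDist}. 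Assuming a common class $C(y)$ is charged by both $\mu_{i,y}$ and $\mu_{j,y}$ on a $\nu$-positive set, I would build, from the average $\mu:=\tfrac12(\mu_i+\mu_j)$ (itself of relative maximal entropy), a perturbed measure $\tilde\gamma$ in the fibre $\pi^{-1}\{\nu\}$, obtained by uniformly mixing inside $C(y)$ using the finite-window splicings provided by Definition~\ref{defn:equiv}. A key supporting observation is that if $x,x'\in\pi^{-1}(y)$ agree outside any finite set $G$ then $x\sim x'$, which ensures that the uniform conditional distribution supplied by Theorem~\ref{CUniformDist} actually respects transition classes. An entropy-boost computation of the same shape as~\eqref{eq:maxentropy} would then yield $h_{\tilde\gamma}(T)\ge h_\mu(T)$, with equality forcing $\mu_{i,y}|_{C(y)}$ and $\mu_{j,y}|_{C(y)}$ to be proportional; this in turn produces a common ergodic component of $\mu_i$ and $\mu_j$, contradicting ergodic distinctness.

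Granted the joining step, the proof concludes in two short moves. First, each $\mathcal{T}_i(y)$ is non-empty for $\nu$-a.e.\ $y$ (because $\mu_{i,y}$ is a probability measure on $\pi^{-1}(y)=\bigsqcup_C C$), and the joining step says the $\mathcal{T}_i(y)$'s are pairwise disjoint, so $k\le|\C(y)|$ $\nu$-a.e. Second, the full-support hypothesis enters: fix a minimal transition block $(W,n,M)$ of $\pi$ (of depth $c_\pi=c^*_\pi$, by Corollary~\ref{cor:classdegree}(a)); since $\nu$ is fully supported, $\nu[W]>0$, and Corollary~\ref{cor:classdegree}(c) then gives $|\C(y)|=c_\pi$ for $\nu$-a.e.\ $y$. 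Combining these, $k\le c_\pi$, as required. The hard part will be organizing the construction of $\tilde\gamma$ so that it remains $T$-invariant, projects to $\nu$, and genuinely witnesses extra mixing inside $C(y)$ while reconciling the bi-infinite nature of transition-equivalence with the finite-window character of Theorem~\ref{CUniformDist}.
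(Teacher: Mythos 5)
This statement is quoted verbatim from \cite{pqs}; the paper offers no proof of it, so there is no internal proof to compare against. Judged on its own terms, your proposal has two genuine problems. First, you prove the wrong bound: the theorem asserts the presentation-dependent bound $N_\nu(\pi)$ with no full-support hypothesis, but your concluding step invokes ``the full-support hypothesis'' (which is not in the statement) and ends with $k\le c_\pi$. Without full support the equality $|\C(y)|=c_\pi$ for $\nu$-a.e.\ $y$ fails in general; $k\le c_\pi$ is exactly Theorem~\ref{thm:final}, which does require full support. Your first move, $k\le|\C(y)|$ for $\nu$-a.e.\ $y$, would suffice if you then applied Theorem~\ref{thm:symboloccurs} to a symbol $w$ with $\nu[w]>0$ minimizing $|\sinv(w)|$ (such a $w$ occurs infinitely often in the positive coordinates of a $\nu$-generic point), giving $|\C(y)|\le N_\nu(\pi)$; but you never take that step. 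Relatedly, your opening recoding move is both unnecessary (the statement already assumes a 1-block code on a 1-step SFT) and dangerous, since $N_\nu(\pi)$ is not conjugacy-invariant.

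Second, and more seriously, your ``central step'' is precisely Theorem~\ref{equivagonal}, one of the main theorems of this paper, and your sketch of it does not go through as described. In the paper that theorem is deduced from Theorem~\ref{orthogonal} (Theorem~1 of \cite{pqs}, the companion of the very corollary you are asked to prove) together with the uniform conditional distribution property of Theorem~\ref{CUniformDist}; it is not proved by a direct entropy perturbation on transition classes. The obstruction you yourself flag at the end is real and unresolved: the randomization in the proof of Theorem~\ref{CUniformDist} works because the set $D$ of configurations being uniformized is determined by a finite window $G$ and a fixed boundary configuration $\eta$, which makes $\Phi$ a finite-block, fibre-preserving map whose average is shift-invariant. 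Membership in a transition class is determined by behaviour at $\pm\infty$, so ``the configurations lying in $C(y)$ over a window'' is not a local datum and admits no analogous i.i.d.\ randomization; nor does equality in a computation shaped like~\eqref{eq:maxentropy} visibly force the proportionality of $\mu_{i,y}|_{C(y)}$ and $\mu_{j,y}|_{C(y)}$ that you assert. The intended derivation is far shorter and needs none of the class-degree machinery: if $n>N_\nu(\pi)$, form the relatively independent joining of $\mu_1,\dots,\mu_n$; for $\hat\mu$-a.e.\ $\hat x$ the common image point visits a symbol $w$ with $|\sinv(w)|=N_\nu(\pi)$, so by pigeonhole two coordinates of $\hat x$ share a symbol at some time $m$, giving $(\mu_i\otimes\mu_j)\{(u,v)\colon u_m=v_m\}>0$ for some $i\ne j$ and contradicting Theorem~\ref{orthogonal}.
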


This bound suffers from not being invariant under conjugacy. For example,
the full 2-shift and any higher block presentation of it give different
bounds on the number of ergodic measures of maximal entropy which map to
the trivial measure on the full 1-shift.

One possibility to avoid having a non-invariant upper bound is to take
the minimum of the bound in \cite{pqs} over all conjugate factor triples with 1-block factor codes. This even improves the original bound. However, there is no known algorithm for computing this minimum. 

Here we show that given an ergodic measure $\nu$ on $Y$, the number of transition classes over a $\nu$-generic point of $Y$ is an upper bound on the number of measures of relative maximal entropy over $\nu$. Proposition~\ref{prop:conjinv}(b) verifies
that this bound is invariant under conjugacy of factor triples and Proposition~\ref{minbound} shows that it beats the bound mentioned above obtained by minimizing the bound in \cite{pqs} over conjugate factor triples. 

\begin{prop}\label{minbound}
Let $\pi:X\to Y$ be a 1-block factor code from a SFT $X$ to a sofic shift $Y$. Let $\nu$ be an ergodic measure on $Y$. Then we have $|\C_\pi(y)|\leq\min\{N_\nu(\tilde\pi)\colon
(X,Y,\pi)\cong(\tilde X,\wt Y,\tilde\pi),\, \wt\pi \text{ is 1-block}\}$ where $y$ is a $\nu$-generic point of $Y$. Equality holds if $\pi$ is finite-to-one and $\nu$ is fully supported.
\end{prop}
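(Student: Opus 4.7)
The plan is to prove the inequality by combining Theorem~\ref{thm:symboloccurs} with the conjugacy invariance of Proposition~\ref{prop:conjinv}(a), and then to verify tightness in the finite-to-one fully supported case by passing to a conjugate code with a magic symbol via Theorem~\ref{thm:magicsymbol}.

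For the inequality, first observe that a $\nu$-generic point $y$ satisfies, by the Birkhoff ergodic theorem applied to each one-symbol cylinder, that every $w \in \A(Y)$ with $\nu[w] > 0$ occurs infinitely often in the positive coordinates of $y$. Choosing such a $w$ that minimizes $|\piinv_\text{b}(w)|$ and applying Theorem~\ref{thm:symboloccurs} gives $|\C_\pi(y)| \le N_\nu(\pi)$. For an arbitrary conjugate $1$-block triple $(\wt X, \wt Y, \tilde\pi)$ with conjugacies $\phi:X \to \wt X$ and $\psi:Y \to \wt Y$, the push-forward $\tilde\nu = \psi_*\nu$ is ergodic and $\psi(y)$ is $\tilde\nu$-generic. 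The same argument applied to $(\wt X, \wt Y, \tilde\pi)$ and $\tilde\nu$, followed by Proposition~\ref{prop:conjinv}(a) to identify $|\C_{\tilde\pi}(\psi(y))|$ with $|\C_\pi(y)|$, yields $|\C_\pi(y)| \le N_{\tilde\nu}(\tilde\pi)$. Taking the infimum gives the main inequality, modulo the notational convention that $N_\nu(\tilde\pi)$ stands for $N_{\tilde\nu}(\tilde\pi)$.

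Now assume $\pi$ is finite-to-one and $\nu$ is fully supported. Invoke Theorem~\ref{thm:magicsymbol} to pass to a conjugate triple $(\wt X, \wt Y, \tilde\pi)$ in which $\tilde\pi$ is a $1$-block code with a magic symbol $w$. Finite-to-oneness and full support are preserved under conjugacy, and an ergodic fully supported measure can only live on an irreducible sofic shift, so Proposition~\ref{prop:degreemagic} applies and yields $d_{\tilde\pi} = |\piinv_\text{b}(w)|$. Combining with Theorem~\ref{thm:finitemagic} and Proposition~\ref{prop:conjinv}(b) gives $d_{\tilde\pi} = c_{\tilde\pi} = c_\pi$. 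Since $\tilde\nu[w] > 0$, one obtains $N_{\tilde\nu}(\tilde\pi) \le |\piinv_\text{b}(w)| = c_\pi$. Finally, full support and ergodicity of $\nu$ make $\nu$-a.e.\ point right transitive, so Corollary~\ref{cor:classdegree}(b) gives $|\C_\pi(y)| = c_\pi$ for $\nu$-generic $y$. Chaining $N_{\tilde\nu}(\tilde\pi) \le c_\pi = |\C_\pi(y)|$ with the opposite inequality from the first half forces equality in the minimization.

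The only delicate point, rather than a genuine obstacle, is the tacit identification $N_\nu(\tilde\pi) = N_{\tilde\nu}(\tilde\pi)$ via the conjugacy $\psi$; once that convention is adopted, each step is a one-line invocation of material already developed in the preceding sections.
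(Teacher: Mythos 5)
Your proof is correct and follows essentially the same route as the paper: Theorem~\ref{thm:symboloccurs} (every symbol of positive $\nu$-measure occurs infinitely often along a generic point) combined with the conjugacy invariance of Proposition~\ref{prop:conjinv}(a) for the inequality, and the magic-symbol recoding together with $d_\pi=c_\pi$ and Proposition~\ref{prop:degreemagic} for the equality case. If anything, your sandwich argument in the second half --- bounding the minimum above by $N_{\tilde\nu}(\tilde\pi)$ for one magic-symbol conjugate and below by $|\C_\pi(y)|=c_\pi$ --- is slightly tidier than the paper's assertion that the minimum is \emph{attained} at a magic-symbol triple, which the paper leaves unjustified.
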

\begin{proof} 
Let the minimum of $N_\nu(\tilde\pi)$ over all factor triples $(\wt X,\wt Y,\tilde\pi)$ which are conjugate to $(X,Y,\pi)$ and $\wt\pi$ is 1-block, be attained at $(\bar X,\bar Y,\bar\pi)$ where $X$ is conjugate to $\bar X$ under a conjugacy $\phi:X\to\bar X$, $Y$ is conjugate to $\bar Y$ under a conjugacy $\psi:Y\to\bar Y$, and $\bar\pi\circ\phi=\psi\circ\pi$. Denote the measure $\nu\circ\psi^{-1}$ by $\bar\nu$. Suppose that the minimum of  $|\bar\pi^{-1}_\text{b}(w)|$ over all $w$ in $\A(\bar Y)$ with $\bar\nu[w]>0$ is attained at a symbol $\bar w$. Let $\bar y$ be a $\bar\nu$-generic point of $\bar Y$. There
is a strictly increasing sequence of integers $(a_i)_{i\in\mathbb N}$ with
$\bar y_{a_i}=\bar w$. Theorem~\ref{thm:classsize} implies that $|\C_{\bar\pi}(\bar y)|\leq
|\bar\pi^{-1}_\text{b}(\bar w)|$. Let $y=\psi^{-1}(\bar y)$. Then by Proposition~\ref{prop:conjinv}(a) we have $|\C_\pi(y)|=|\C_{\bar\pi}(\bar y)|\leq
|\bar\pi^{-1}_\text{b}(\bar w)|$ which completes the proof of the first part of the result.

Now suppose $\pi$ is finite-to-one and $\nu$ is fully supported. Since $\nu$ is fully supported we have 
$$N_\nu(\tilde\pi)=\min\{|\wt\pi^{-1}_\text{b}(w)|\colon w\in\A(\wt Y)\}.$$ 
Therefore the minimum of $N_\nu(\tilde\pi)$ over all $(\wt X,\wt Y,\tilde\pi)$ which are conjugate to $(X,Y,\pi)$ and $\wt\pi$ is 1-block is attained at a factor triple $(\bar X,\bar Y,\bar\pi)$ with a magic symbol $\bar w\in \A(\bar Y)$. By Proposition~\ref{prop:degreemagic} we have $|\bar {\pi}^{-1}_\text{b}(\bar w)|=d_{\bar \pi}$ which implies
$$\min\{N_\nu(\tilde\pi)\colon (X,Y,\pi)\cong(\bar X,\bar Y,\tilde\pi),\, \bar\pi \text{ is 1-block}\}=d_{\bar\pi}.$$ Since the degree of a code is invariant under conjugacy of factor triples we deduce that $$\min\{N_\nu(\tilde\pi)\colon (X,Y,\pi)\cong(\bar X,\bar Y,\tilde\pi),\, \bar\pi \text{ is 1-block}\}=d_\pi.$$ Recalling that when $\pi$ is finite-to-one the degree and the class degree of $\pi$ are equal, the proof is complete. 
\end{proof}
Note that equality in Proposition~\ref{minbound} does not always hold. For example, consider the trivial factor code $\pi:\{0,1\}^\Z\ra \{0\}^\Z$. Then $c_\pi=1$; however, if $\left(\wt X,\{0\}^\Z,\tilde\pi\right)$ is a factor triple
conjugate to $\left(\{0,1\}^\Z, \{0\}^\Z,\pi\right)$ then $\A(\wt X)$ must be strictly greater than 1 and therefore
$N_\nu(\tilde\pi)>1$ for any ergodic measure $\nu$ on $Y$.

\begin{defn}Let $\pi:X\ra Y$ be a 1-block factor code from a SFT $X$ to a sofic shift
$Y$ and $\nu$ be an ergodic measure on $Y$. Let $\mu_1,\dots,\mu_n$
be invariant measures in the fibre $\piinv\{\nu\}$. The
\textsf{relatively independent joining}
$\tilde\mu=\mu_1\otimes\dots\otimes_{\nu}\mu_n$ of $\mu_1,\dots,\mu_n$
over $\nu$ is defined as follows: if $A_1,\dots,A_n$ are measurable
subsets of $X$ then
$$
\hat\mu(A_1\times\dots\times
A_n)=\int_Y\prod_{i=1}^{n}\E_{\mu_i}(\mathbf 1_{A_i}|\piinv\Balg_Y)\circ\piinv\,
d\nu.
$$
\end{defn}
Writing $p_i$ for the projection $X^n\ra X$ onto the $i$th
coordinate, it is shown that for $\hat\mu$-almost every $\hat x\in
X^n$, $\pi(p_i(\hat x))$ is the same for each $i$~\cite{rudolphB}.

We will use Theorem~\ref{orthogonal} below which is the main theorem from \cite{pqs} to prove a stronger theorem (Theorem~\ref{equivagonal}).

\begin{thm}\label{orthogonal}\cite[Theorem 1]{pqs}
Let $\pi:X\ra Y$ be a 1-block factor code from a 1-step SFT $X$ to a
sofic shift $Y$. Let $\nu$ be an ergodic measure on $Y$, and  two distinct ergodic measures $\mu_1$ and $\mu_2$ be measures of relative maximal entropy over $\nu$. Then $(\mu_1\otimes\mu_2)\{(u,v)\in X\times X\colon
u_0=v_0\}=0.$
\end{thm}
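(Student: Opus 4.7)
The plan is to argue by contradiction. Suppose $\eta := (\mu_1\otimes_\nu\mu_2)\{(u,v):u_0=v_0\}>0$ and write $h^*:=h(\mu_i\mid\nu)$ for the common relative-entropy value. I will try to exhibit a $T$-invariant measure $\mu^*$ on $X$ with $\pi_*\mu^*=\nu$ and $h(\mu^*\mid\nu)>h^*$, contradicting the relative maximality of $\mu_1$ and $\mu_2$.

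After a harmless recoding I take $X$ to be 1-step and $\pi$ to be 1-block. The central tool is Theorem~\ref{CUniformDist}: every invariant measure of relative maximal entropy over $\nu$, including $\mu_1$, $\mu_2$, and any measure $\mu^*$ I may construct, enjoys the uniform conditional distribution (UCD) property on every finite window given the $Y$-coordinate and the complementary $X$-data.

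The construction of $\mu^*$ is a splicing argument. View $\hat\mu:=\mu_1\otimes_\nu\mu_2$ as a shift-invariant measure on the 1-step SFT $X\times_Y X=\{(u,v)\in X\times X:\pi(u)=\pi(v)\}$. Passing to an ergodic component of $\hat\mu$ that still charges the set $\{u_0=v_0\}$ and applying the Birkhoff ergodic theorem, for $\hat\mu$-a.e.\ pair $(u,v)$ the set of agreement sites $S(u,v):=\{i\in\Z:u_i=v_i\}$ has positive density $\eta'>0$. Because $X$ is 1-step, at each $i\in S(u,v)$ one may switch the output from the $u$-branch to the $v$-branch without leaving $X$ and without altering the $Y$-image. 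Introduce an auxiliary Bernoulli$(1/2)$ sequence $\omega\in\{0,1\}^\Z$ and define a map $\Phi(u,v,\omega)\in X$ that maintains a current branch, flipping it whenever $i\in S(u,v)$ and $\omega_i=1$. Push $\hat\mu\otimes\mathrm{Bernoulli}(1/2)$ forward under $\Phi$ and average over shifts to produce a $T$-invariant measure $\mu^*$ on $X$ with $\pi_*\mu^*=\nu$.

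The main obstacle is the strict entropy inequality $h(\mu^*\mid\nu)>h^*$. Heuristically, $\mu^*$ reproduces $\mu_i$-typical behavior locally (so inherits a fiber entropy of at least $h^*$) and on top of that absorbs the independent Bernoulli bits sprinkled at the positive-density set $S(u,v)$. Making this rigorous calls for a careful conditional-entropy computation on a long central window $[-N,N]$, checking that the switching contribution survives at the $\pi$-relative level, i.e.\ that distinct switching patterns generically produce distinct outputs once one conditions on the $Y$-image and on the boundary $X$-data; UCD applied to $\mu^*$ itself is essential here to rule out that $\Phi$ collapses too much information. Dividing by $2N+1$ and letting $N\to\infty$ should then yield a strict gap of at least $\eta'\log 2$ over $h^*$, giving the desired contradiction and forcing $\eta=0$.
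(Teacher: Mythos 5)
First, a remark on the comparison you asked for: the paper does not prove this statement at all --- Theorem~\ref{orthogonal} is imported verbatim from \cite[Theorem~1]{pqs} --- so your proposal can only be measured against the argument in that reference. Your general strategy (splice the two coordinates of the relatively independent joining at agreement sites, randomize the splicing, and derive an entropy contradiction) is indeed the right family of ideas and close to what is done there.

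There is, however, a fatal gap: your argument nowhere uses the hypothesis that $\mu_1$ and $\mu_2$ are \emph{distinct}, and the statement is false without that hypothesis, so some step must break. The step that breaks is the claimed entropy gain of $\eta'\log 2$ from the Bernoulli switching bits. Take $Y$ a fixed point, $\nu$ the point mass, $X$ the full $2$-shift, and $\mu_1=\mu_2=\mu$ the Bernoulli $(1/2,1/2)$ measure (the unique measure of relative maximal entropy here, with $h^*=\log 2$). Then $(\mu\otimes_\nu\mu)\{u_0=v_0\}=1/2$, the agreement set has density $1/2$, and your construction applies verbatim; yet any measure $\mu^*$ on the full $2$-shift has entropy at most $\log 2=h^*$, so no strict gain is possible. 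The coin flips are simply absorbed: distinct switching patterns need not produce distinct outputs, and nothing in your argument forces them to. The genuine source of the strict inequality in \cite{pqs} is different: the spliced process has, at each step, a conditional distribution (given the $Y$-image and the past) which is a mixture of the conditional distributions coming from $\mu_1$ and from $\mu_2$, and strict concavity of Shannon entropy gives $H\bigl(\tfrac12 p+\tfrac12 q\bigr)\ge\tfrac12 H(p)+\tfrac12 H(q)$ with equality if and only if $p=q$; ergodicity then shows that a.e.\ equality forces $\mu_1=\mu_2$. That is exactly where distinctness must enter, and it is the ingredient your write-up lacks. A secondary error: you propose to apply Theorem~\ref{CUniformDist} to $\mu^*$, but that theorem holds only for measures of relative maximal entropy, which $\mu^*$ is not known to be --- indeed the whole point of the construction is that it should \emph{exceed} the maximum --- so the uniform conditional distribution property may legitimately be invoked only for $\mu_1$ and $\mu_2$.
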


\begin{thm}\label{equivagonal}
Let $(X,Y,\pi)$ be a factor triple. Let $\nu$ be an ergodic measure on $Y$, and  two distinct measures $\mu_1$ and $\mu_2$ be ergodic measures of relative maximal entropy over $\nu$. Then 
$(\mu_1\otimes\mu_2)\{(u,v)\in X\times X\colon u\sim v\}=0$.
\end{thm}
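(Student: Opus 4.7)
The plan is to assume for contradiction that $\hat\mu:=\mu_1\otimes_\nu\mu_2$ assigns positive measure to $E:=\{(u,v)\in X\times X:u\sim v\}$ and to derive a contradiction with Theorem~\ref{orthogonal}. By Proposition~\ref{prop:conjinv} together with conjugacy-invariance of entropy, I will recode so that $\pi$ is a 1-block factor code from a 1-step SFT. By Corollary~\ref{cor:classdegree}(c), for $\nu$-a.e.\ $y$ the fibre $\piinv(y)$ splits measurably into exactly $c:=|\C(\nu)|$ transition classes; writing $\mu_j^y$ for the disintegration of $\mu_j$ over $\pi$, positivity of $\hat\mu(E)$ gives a measurable selection $y\mapsto C(y)\in\C(y)$ on a $\nu$-positive-measure set with $\mu_1^y(C(y))>0$ and $\mu_2^y(C(y))>0$.

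The key preliminary observation is that transition classes are preserved by local modifications: if $u,u'\in\piinv(y)$ agree outside a finite window then they agree at infinite increasing and decreasing sequences of coordinates, so Fact~\ref{fact:class} gives $u\sim u'$. In particular, membership in a prescribed transition class is measurable with respect to $u|_{G^c}$ for any finite $G\subseteq\Z$, and the conditional measures $\mu_j^{y,C}:=\mu_j^y(\,\cdot\mid u\in C)$ inherit the uniform conditional distribution property of Theorem~\ref{CUniformDist}. Because $\hat\mu$ is relatively independent over $\nu$, this means that conditionally on $(u|_{G^c},v|_{G^c},y)$ and on $E$, the fillings $u|_G$ and $v|_G$ are drawn independently and uniformly over their respective sets of $\pi$-consistent extensions.

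To close the argument, I will fix a minimal transition block $(W,n,M)$ with $\nu[W]>0$ and $|M|=c$ (whose existence is provided by the proof of Corollary~\ref{cor:classdegree}(c)), let $p=|W|-1$, and focus on an occurrence of $W$ in $y$ at positions $[k,k+p]$. With $G=[k+1,k+p-1]$, the main combinatorial step is to show that, conditionally on $E$, the set $S(u_k,u_{k+p})\cap S(v_k,v_{k+p})\cap M$ is non-empty, where $S(b_0,b_p)\subseteq\A(X)$ denotes the symbols realised at position $n$ by $\pi$-preimages of $W$ bridging $b_0$ and $b_p$. The minimality $|M|=c$ together with Definition~\ref{defn:belong} singles out a unique symbol $a_C\in M$ belonging to each class $C$; choosing the occurrence of $W$ far enough to the right that $u_k\in S_k(C)$ and $v_k\in S_k(C)$ (possible $\hat\mu$-a.s.\ by Lemma~\ref{nontransient}), a Stage~2--Stage~3 style routing from the proof of Theorem~\ref{thm:classdegree} is intended to route both $u|_{[k,k+p]}$ and $v|_{[k,k+p]}$ through $a_C$. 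Granted this, the uniform conditional distributions yield $\hat\mu\{u_{k+n}=v_{k+n}\}>0$ for some fixed offset $k$; by shift-invariance this promotes to $\hat\mu\{u_0=v_0\}>0$, contradicting Theorem~\ref{orthogonal}. The principal obstacle is precisely this routing claim: the transition-block definition only guarantees \emph{some} routing through $M$, so producing a \emph{common} routing for two points of the same class requires carefully combining the measurable class decomposition with Lemma~\ref{nontransient} and the Stage~2--3 construction.
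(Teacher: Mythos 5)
You have the right endgame, and it is the same as the paper's: produce a single symbol $a$ and a single time at which both $u$ and $v$ can be rerouted, use the uniform conditional distribution property (Theorem~\ref{CUniformDist}) to replace the cylinders $[A]\times[B]$ by $[A']\times[B']$ with $A'_0=B'_0=a$ without changing the $(\mu_1\otimes\mu_2)$-measure, and contradict Theorem~\ref{orthogonal}. The divergence --- and the gap --- is in how the common routing symbol is obtained. The paper uses no transition blocks, no minimality, and no measurable decomposition of fibres into classes. It reads the common symbol straight off the definition of $u\sim v$: applying $u\to v$ at a time $n$ and $v\to u$ at the resulting matching time, and splicing the two witnesses (legitimate because $X$ is a $1$-step SFT and the pieces agree at the splice coordinates), yields a point of $\piinv(y)$ that agrees with $u$ outside a finite window and equals $v_t$ at some coordinate $t$. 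Hence every pair with $u\sim v$ admits $t$, $k$, $a$ such that $u_{[t-k,t+k]}$ and $v_{[t-k,t+k]}$ are both routable through $a$ at time $t$; the diagonal set is covered by countably many such events, one has positive measure, and each is annihilated as above.

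Your route instead passes through a minimal transition block $(W,n,M)$ with $|M|=c$ and the claim that minimality attaches to each class $C$ a single symbol $a_C\in M$ through which \emph{every} point of $C$ can be routed at the given occurrence of $W$. That claim is precisely what is missing, and it is the heart of the argument, not a technical loose end. Definition~\ref{defn:TB} only guarantees that each preimage block of $W$ is routable through \emph{some} element of $M$, possibly different elements for different points of the same class; and if $x$ is rerouted through $a\in M$ with $a\in S_{k+n}(C')$, one only learns $C'\to[x]$, not $[x]=C'$, so even the class-to-symbol correspondence is not pinned down. Nothing in Lemma~\ref{nontransient} or Stages 2--3 of Theorem~\ref{thm:classdegree} upgrades ``routable through some element of $M$'' to ``routable through a common element for two prescribed points of one class'': those constructions reroute a single point at a time that depends on the point. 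You correctly identify this as the principal obstacle, but the class-degree machinery you invoke does not close it, whereas the definition of $\sim$ closes it for free. Separately, your opening step assumes that the transition classes form a measurable partition admitting a measurable selection $y\mapsto C(y)$ with $\mu_i^y(C(y))>0$; this is unproven here and, as the paper's argument shows, entirely avoidable.
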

\begin{proof}

First we show that, without loss of generality, we may assume $X$ is a 1-step SFT and $\pi$ is a 1-block factor code with a magic symbol. Suppose $(\wt X,\wt Y,\wt\pi)$ is a factor triple conjugate to $(X,Y,\pi)$ and $\phi:X\to\wt X$ is a conjugacy from $X$ to $\wt X$. By the proof of Proposition~\ref{prop:conjinv}(a) we have $$(\phi\times\phi)\{(u,v)\in X\times X\colon u\sim v\}=\{\left(\phi(u),\phi(v)\right)\in \wt X\times\wt X\colon \phi(u)\sim \phi(v)\}.$$ Moreover, the corresponding measure to $\mu_1\otimes\mu_2$ under the conjugacy $$\phi\times\phi:X\times X\to \wt X\times\wt X,$$ is $\wt\mu_1\otimes\wt\mu_2$ where $\wt\mu_1$ and $\wt\mu_2$ are corresponding measures to $\mu_1$ and $\mu_2$ under $\phi$. It follows that $$(\mu_1\otimes\mu_2)\{(u,v)\in X\times X\colon u\sim v\}=(\wt\mu_1\otimes\wt\mu_2)\{(\phi(u),\phi(v))\in\wt X\times\wt X\colon \phi(u)\sim \phi(v)\}.$$ Therefore by recoding, without loss of generality, we may assume $X$ is a 1-step SFT and $\pi$ is a 1-block factor code.

Let $u,\, v$  be in $X$, $n$ be in $\Z$, and $a$ be a symbol in $\A(X)$. Write
$u\overset{a}{\approx_n}v$ if $u\sim v$ and there are integers $m\leq n$ and $p\geq n$ such that 
\begin{equation*}
v_i=
\begin{cases}
u_i& -\infty<i<m,\, p<i<\infty
\\
a & i=n
\end{cases}
\end{equation*}
It is worth mentioning that the relation
$\overset{a}{\approx_n}$ is not in general an equivalence relation on $X$. We have
$$
\{(u,v): u\sim v\}=\bigcup_{a\in\A(X)}\bigcup_{n\in\Z}\big\{(u,v)\colon u\overset{a}{\approx_n}v\big\}.
$$
Now suppose, for a contradiction, that $(\mu_1\otimes\mu_2)\{(u,v): u\sim v\}>0$. There must be an integer $n$
and a symbol $a$ in $\A(X)$ such that
$$
(\mu_1\otimes\mu_2)\{(u,v)\colon u\overset{a}{\approx_n}v\}>0.
$$
By applying $(T\times T)^{-n}$, without loss of generality, we may assume $n=0$. Since we have
$$
\{(u,v)\colon u\overset{a}{\approx_0}v\}=\bigcup_{k\in\Z}\{(u,v)\colon u\overset{a}{\approx_0}v,\,u_{-k}^k,v_{-k}^k\text{ are routable through }a\text{ at time 0 }\},
$$
it follows that there is an integer $k$ so that
$$
(\mu_1\otimes\mu_2)\{(u,v)\colon u\overset{a}{\approx_0}v,\,u_{-k}^k,v_{-k}^k\text{ are routable through }a\text{ at time 0 }\}>0.
$$
 Considering only blocks of length $2k+1$, there must be blocks $A,\,B$ in $X$ such that 
\begin{equation}\label{d}
(\mu_1\otimes\mu_2)\big\{(u,v)\colon u\overset{a}{\approx_0}v,\,u_{[-k,\,k]}=A,\,v_{[-k,\,k]}=B \big\}>0.
\end{equation}
Both $A$ and $B$ are routable through $a$ at time $n$; i.e., there are blocks $A',\,B'$ in $\piinv_\text{b}(W)$ with $A'_{-k}=A_{-k}$, $B'_{-k}=B_{-k}$, $A'_0=B'_0=a$, $A'_{k}=A_k$, and $B'_k=B_k$. Let $G\subseteq \Z$ be the set $\{-k,\dots,k\}$. Basic properties of conditional expectation imply that
\begin{equation}\label{expe}
\begin{split}
\E_{\mu_1}\left(\mathbf 1_{[A]}|\piinv(\Balg_Y)\right)
&
=\E\left(\E_{\mu_1}\left(\mathbf 1_{\left._{-n}[A]\right.}|\piinv(\Balg_Y)\vee \sigma(X_{
G^c})\right)|\piinv(\Balg_Y)\right)
\\
&=\E\left(\E_{\mu_1}\left(\mathbf 1_{\left._{-n}[A']\right.}|\piinv(\Balg_Y)\vee \sigma(X_{
G^c})\right)|\piinv(\Balg_Y)\right)
\\
&=\E_{\mu_1}\left(\mathbf
1_{[A']}|\piinv(\Balg_Y)\right),
\end{split}
\end{equation}
where the second equality follows from Theorem~\ref{CUniformDist}.
Similarly we have
\begin{equation}\label{expec}
\E_{\mu_2}(\mathbf 1_{[B]}|\piinv\Balg_Y)=
\E_{\mu_2}(\mathbf 1_{[B']}|\piinv\Balg_Y).
\end{equation}
Let $D=\big\{(u,v)\colon u\overset{a}{\approx_0}v,\,u_{[-k,\,k]}=A,\,v_{[k,\,k]}=B \big\}$. Since
$D\subseteq\left._{-k}[A]\right.\times\left._{-k}[B]\right.$ we have
\begin{align*}
\mu(D)&\leq(\mu_1\otimes\mu_2)([A]\times [B])
\\
&=\int_Y\E_{\mu_1}(\mathbf 1_{[A]}|\piinv\Balg_Y)\E_{\mu_2}(\mathbf 1_{[B]}|\piinv\Balg_Y)\circ\piinv\,d\nu
\\
&= \int_Y\E_{\mu_1}(\mathbf
1_{[A']}|\piinv\Balg_Y)\E_{\mu_2}(\mathbf 1_{[B']}|\piinv\Balg_Y)\circ\piinv\, d\nu&\text{using}
\eqref{expe}\text{ and }\eqref{expec}
\\
&=
(\mu_1\otimes\mu_2)([A']\times [B'])
\\
&=0
\end{align*}
where the last equality follows from Theorem~\ref{orthogonal} since
$A'_0=B'_0=a$. This contradicts Equation \eqref{d}.
\end{proof}
The following theorem is a more general case of Theorem~\ref{thm:final}.
\begin{thm}\label{bound}
Let $(X,Y,\pi)$ be a factor triple and $\nu$ be an ergodic measure on $Y$. The number of ergodic measures of relative maximal entropy over $\nu$ is at most $|\C(\nu)|$.
\end{thm}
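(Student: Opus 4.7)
The strategy is to combine Theorem~\ref{equivagonal} with a pigeonhole argument on a relatively independent joining of all candidate measures of relative maximal entropy. Suppose for contradiction that $\mu_1,\dots,\mu_N$ are distinct ergodic measures of relative maximal entropy over $\nu$ with $N>|\C(\nu)|$. I would form the $N$-fold relatively independent joining $\hat\mu=\mu_1\otimes\cdots\otimes_\nu\mu_N$ on $X^N$, defined, in direct analogy with the two-factor definition already given in the paper, by
$$\hat\mu(A_1\times\cdots\times A_N)=\int_Y\prod_{i=1}^N\E_{\mu_i}(\mathbf 1_{A_i}\mid\piinv\Balg_Y)\circ\piinv\,d\nu.$$
A routine calculation shows that the marginal of $\hat\mu$ on any pair of coordinates $(i,j)$ is precisely the pairwise joining $\mu_i\otimes_\nu\mu_j$, and that $\hat\mu$ is supported on the set of tuples whose coordinates project to a common point $y\in Y$.

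Next I would argue that $\hat\mu$ must concentrate on tuples exhibiting at least one coincidence of transition class. By Corollary~\ref{cor:classdegree}(c), for $\nu$-almost every $y\in Y$ there are exactly $|\C(\nu)|$ transition classes in $\piinv(y)$. Since $\hat\mu$ projects to $\nu$ via $\pi\circ p_i$ for each coordinate $i$, for $\hat\mu$-almost every tuple $(x_1,\dots,x_N)$ the common image $y$ satisfies $|\C(y)|=|\C(\nu)|<N$. The pigeonhole principle then forces, for every such tuple, the existence of indices $i\neq j$ with $x_i\sim x_j$, and hence
$$\hat\mu\Big(\bigcup_{i\neq j}\{(x_1,\dots,x_N):x_i\sim x_j\}\Big)=1.$$

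On the other hand, Theorem~\ref{equivagonal} applied to each pair of distinct ergodic measures of relative maximal entropy $(\mu_i,\mu_j)$ yields $(\mu_i\otimes_\nu\mu_j)\{(u,v):u\sim v\}=0$. Using the identification of the pairwise marginal of $\hat\mu$ as $\mu_i\otimes_\nu\mu_j$ and a union bound over the finitely many pairs $(i,j)$, I would conclude
$$\hat\mu\Big(\bigcup_{i\neq j}\{(x_1,\dots,x_N):x_i\sim x_j\}\Big)\leq\sum_{i\neq j}(\mu_i\otimes_\nu\mu_j)\{(u,v):u\sim v\}=0,$$
which directly contradicts the preceding display and completes the proof.

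Since the conceptual content is entirely supplied by Theorem~\ref{equivagonal} and Corollary~\ref{cor:classdegree}(c), no substantial obstacle remains. The only points requiring brief verification are the Borel measurability of the sets $\{x_i\sim x_j\}\subseteq X^N$, which follows from the countable quantifier structure of Definition~\ref{defn:equiv}, and the identification of the pairwise marginals of the $N$-fold joining with the pairwise relatively independent joinings, which is immediate from the defining integral formula. Theorem~\ref{thm:final} is then recovered as the special case in which $\nu$ has full support, since then every transition block occurring anywhere in $Y$ has positive $\nu$-measure, forcing $|\C(\nu)|=c_\pi$.
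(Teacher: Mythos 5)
Your proposal is correct and follows essentially the same route as the paper: form the $N$-fold relatively independent joining, use Corollary~\ref{cor:classdegree}(c) and the pigeonhole principle to force a transition-class coincidence $\hat\mu$-almost surely, and contradict Theorem~\ref{equivagonal} via the pairwise marginals. The paper additionally records an initial recoding to a $1$-step SFT with a $1$-block code, but this is a cosmetic difference.
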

\begin{proof}
Let $(\wt X,\wt Y,\pi)$ be a factor triple conjugate to $(X,Y,\pi)$ under conjugacies $\phi:X\to \wt X$ and $\psi:Y\to\wt Y$. By Observation~\ref{obs:numberofMRME}, the number of ergodic measures of relative maximal entropy over $\nu$ and $\nu\circ\psi^{-1}$ is the same. Moreover, by Proposition~\ref{prop:conjinv}(a), for each $y$ in $Y$ we have $|\C_\pi(y)|=|\C_{\wt\pi}(\psi(y))|$. Therefore by recoding, without loss of generality, we may assume $X$ is a 1-step SFT and $\pi$ is a 1-block factor code.

Suppose, for a contradiction, that there are $n>|\C(\nu)|$ ergodic measures $\mu_1,\dots,\mu_n$ on $X$ of relative maximal entropy over $\nu$. Form the relatively independent joining $\hat\mu$ on
$X^n$ of the measures $\mu_1,\dots,\mu_n$. Note that for
$\hat\mu$-a.e. $\hat x\in X^n$, $\pi(p_i(\hat x))$ has $|\C(\nu)|$ transition classes over it. The assumption $n>|\C(\nu)|$ implies that for $\hat\mu$-a.e. $\hat
x=(x_1,\dots,x_n)\in X^n$ there are distinct $i,j$ such that $p_i(\hat
x)\thicksim p_j(\hat x)$; i.e.,
$$
\hat\mu\left(\bigcup_{1\leq i<j\leq n}\{\hat x=(x_1,\dots,x_n)\colon
x_i\thicksim x_j\}\right)=1.
$$
At least one of the sets $S_{i,j}=\{(x_1,\dots,x_n)\colon x_i\thicksim
x_j\}$ must have positive $\hat\mu$-measure. It follows that
\begin{equation*}
\begin{split}
0&<\hat\mu(S_{i,j})
\\
&=\hat\mu(\{(x_1,\dots,x_n),~x_i\thicksim x_j\})
\\
&=
(\mu_i\otimes\mu_j)\{(u,v)\colon u\thicksim v\}.
\end{split}
\end{equation*}
This contradicts Theorem~\ref{equivagonal}.
\end{proof}

\section{Open Questions}
Let $X$ be a one-sided topologically mixing shift of finite type. An invariant measure $\mu$ on $X$ is a \textsf{Gibbs measure} corresponding to $f\in C(X)$ if there are constants $C_1,~C_2>0$ and $P>0$ such that $$C_1\leq\frac{\mu([x_0x_1\dots x_{n-1}])}{\exp\left(-Pn+(S_nf)(x)\right)}\leq C_2$$ for every $x\in X$ and $n\geq 1$, where $(S_nf)(x)=\sum_{k=0}^{n-1} f(T^k(x))$.

Walters \cite{waltersConv,waltersRegula} introduces a class Bow$(X)$ of functions that contains the functions with summable variation, all of which have unique equilibrium states. Let 
$$\text{var}_n(f)=\sup\{|f(x)-f(y)|\colon x,~y\in X,~x_i=y_i \text{ for all } 0\leq i\leq n-1\}.$$ 
Then Bow$(X,T)=\{f\in C(X)\colon \sup_{n\geq 1} \text{var}_n(S_nf)<\infty\}.$
\begin{thm}\label{thm:equilib}\cite[Theorem 2.16]{waltersConv}
Let $f\in\text{Bow}(X)$. Then $f$ has a unique equilibrium state $\mu$ which is a Gibbs measure. 
\end{thm}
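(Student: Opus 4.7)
The plan is to follow the thermodynamic formalism of Ruelle, with the modifications introduced by Walters to accommodate the weaker Bowen-type regularity $V := \sup_{n\geq 1}\text{var}_n(S_n f) < \infty$. The strategy is to construct a candidate Gibbs measure via the transfer operator, verify the Gibbs bounds and the variational equality, and then obtain uniqueness from the Gibbs property itself. The single analytic input that powers everything else is the bounded-distortion estimate $|S_n f(y) - S_n f(z)| \leq V$ whenever $y_i = z_i$ for $0\leq i \leq n-1$, which is immediate from the Bowen condition.

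First I would introduce the Ruelle transfer operator $(L_f g)(x) = \sum_{T(y)=x} e^{f(y)} g(y)$ on $C(X)$, set $P = \lim_n \tfrac{1}{n}\log\|L_f^n \mathbf{1}\|_\infty$, and use Schauder-type compactness to extract a probability measure $\nu$ with $L_f^*\nu = e^P\nu$, together with a strictly positive continuous eigenfunction $h$ satisfying $L_f h = e^P h$. The eigenfunction is produced as a Cesaro limit of iterates of $e^{-P}L_f$ applied to $\mathbf{1}$, and the bounded-distortion estimate is what supplies the equicontinuity needed for this limit to exist in $C(X)$.

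Normalize so that $\int h\,d\nu = 1$ and set $\mu = h\nu$; $T$-invariance then follows from the two eigenrelations. For a cylinder $[x_0\dots x_{n-1}]$, the identity $L_f^{*n}\nu = e^{nP}\nu$ expands $\nu([x_0\dots x_{n-1}])$ as $e^{-nP}\sum e^{S_n f}$ summed over preimages lying in the cylinder, and on that cylinder all summands are comparable to $e^{S_n f(x)}$ up to a factor $e^V$. Multiplying by $h$, which is bounded away from $0$ and $\infty$ by positivity and compactness, produces the two-sided Gibbs bounds $C_1 \leq \mu([x_0\dots x_{n-1}])/\exp(-Pn + S_n f(x)) \leq C_2$ of the statement. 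Combining these bounds with Shannon--McMillan--Breiman gives $h_\mu(T) + \int f\,d\mu = P$, so $\mu$ is an equilibrium state. For uniqueness, let $\mu'$ be any equilibrium state; the Gibbs bounds together with a Jensen-type argument applied via the variational principle first force $\mu' \ll \mu$ with essentially bounded density, and the Gibbs bounds also imply a quasi-Bernoulli mixing property, hence exactness of $\mu$, which forces that density to be constant. Therefore $\mu' = \mu$.

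The main obstacle is that under summable variation of $f$, the normalized operator $e^{-P}L_f$ is quasi-compact on a suitable Lipschitz-type Banach space and both the construction of $h$ and the uniqueness assertion fall out of a spectral gap; under only the Bowen condition, no such spectral gap is available on any natural space. The hard part therefore becomes establishing enough pointwise convergence of $e^{-nP}L_f^n g$ to $h\int g\,d\nu$ to drive the above argument using only compactness, positivity, and the uniform estimate $V$. This is the technical heart of Walters's approach in~\cite{waltersConv,waltersRegula} and is what would need to be reproduced to obtain the theorem from first principles.
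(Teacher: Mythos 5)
The first thing to note is that the paper does not prove this statement at all: Theorem~\ref{thm:equilib} is quoted verbatim from Walters \cite{waltersConv} (his Theorem 2.16) and appears only to motivate the conjecture that follows it, so there is no internal proof to compare against. Your outline must therefore be measured against Walters's own argument, and while it identifies the right family of ideas (transfer operator, eigenmeasure, Gibbs bounds, uniqueness via absolute continuity plus ergodicity), it contains one concrete misstep and one large, self-acknowledged hole.

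The misstep: you assert that the bounded-distortion estimate $|S_nf(y)-S_nf(z)|\le V$ for points agreeing on coordinates $0,\dots,n-1$ ``supplies the equicontinuity'' needed for the Ces\`aro limit of $e^{-nP}L_f^n\mathbf{1}$ to exist in $C(X)$ and yield a continuous positive eigenfunction $h$. It does not. Equicontinuity of the family $\{e^{-nP}L_f^n\mathbf{1}\}$ requires some decay such as $\mathrm{var}_{n+k}(S_nf)\to 0$ as $k\to\infty$ uniformly in $n$, whereas Bowen's condition gives only uniform \emph{boundedness} of $\mathrm{var}_n(S_nf)$, not decay. This is exactly where the summable-variation hypothesis is genuinely stronger, and it is the reason Walters's paper is devoted to the question of \emph{when} the Ruelle operator converges: under Bowen's condition alone a continuous eigenfunction need not be produced this way. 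Walters instead works directly with an eigenmeasure $\nu$ satisfying $L_f^*\nu=e^P\nu$ (which exists by Schauder--Tychonoff with no regularity assumption on $f$), derives the Gibbs inequalities for it from the distortion bound, and obtains existence and uniqueness of the equilibrium state from those inequalities together with an entropy and ergodicity argument in the style of Bowen, bypassing $h$ entirely. Your closing paragraph concedes that the convergence of $e^{-nP}L_f^ng$ is ``the technical heart'' that ``would need to be reproduced''; since that is precisely the part that fails as you have set it up, the proposal is a roadmap with its central span missing rather than a proof.
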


The relative version of this result is an open question. We make the following conjecture.
\begin{conj}
Let $(X,Y,\pi)$ be a factor triple and $\nu$ be a fully supported ergodic measure on $Y$. For any function $f\in \text{Bow}(X)$ the number of ergodic measures of maximal pressure of $f$ in the fibre $\pi^{-1}\{\nu\}$ is at most $c_\pi$.
\end{conj}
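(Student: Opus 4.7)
The proof should follow the same four-stage architecture used for measures of relative maximal entropy in the preceding sections, replacing the uniform conditional distribution property by a Gibbs-type one. First, I would establish a relative Gibbs property for any ergodic measure $\mu$ of maximal relative pressure of $f$ over $\nu$: given a finite set $G\subseteq\Z$, the conditional distribution of $\mu$ on configurations at $G$ given $\piinv(\Balg_Y)\vee\sigma(X_{G^c})$ assigns to each allowable configuration $\alpha$ on $G$ (extending $x_{\pGcom}$ and projecting to $\s(x_G)$) a weight proportional to $\exp\bigl(\sum_{i\in G} f(T^i x^{\alpha})\bigr)$, where $x^{\alpha}$ coincides with $x$ off $G$ and equals $\alpha$ on $G$. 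The argument reruns the Burton--Steif swap in the proof of Theorem~\ref{CUniformDist}, but with the uniform Bernoulli randomization $\lambda$ on $D^{\Z}$ replaced by a product measure whose marginals weight each $\alpha\in D$ proportionally to its local Gibbs weight. The entropy chain \eqref{eq:maxentropy} is upgraded to a pressure chain by adding $\int f\,d\cdot$ to each term; the hypothesis $f\in\text{Bow}(X)$ ensures that the change in $\int f\,d\tilde\gamma$ under the swap is controlled by the chosen weights and that the bounded variation of $S_nf$ outside $G$ contributes only a uniformly bounded error.

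Second, I would prove the pressure analog of Theorem~\ref{orthogonal}: for distinct ergodic measures $\mu_1,\mu_2$ of maximal relative pressure over $\nu$, $(\mu_1\otimes\mu_2)\{(u,v):u_0=v_0\}=0$. The Shannon--McMillan--Breiman and subadditive-ergodic arguments from \cite{pqs} should go through with the Birkhoff averages $\frac{1}{n}S_nf\to\int f\,d\mu_i$ incorporated into the pressure estimates; positive joint measure on $\{u_0=v_0\}$ would allow one to construct a measure in the fibre whose relative pressure strictly exceeds the maximum, a contradiction.

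Third, the transition-class orthogonality then follows as in Theorem~\ref{equivagonal}. Decomposing $\{u\sim v\}$ over routing symbols $a$ and routable blocks $A',B'$ exactly as in the original proof, the identity $\E_{\mu_i}(\mathbf{1}_{[A]}|\piinv\Balg_Y)=\E_{\mu_i}(\mathbf{1}_{[A']}|\piinv\Balg_Y)$ used in \eqref{expe} is replaced by a Gibbs-weighted relation $\E_{\mu_i}(\mathbf{1}_{[A]}|\piinv\Balg_Y)=(w(A)/w(A'))\,\E_{\mu_i}(\mathbf{1}_{[A']}|\piinv\Balg_Y)$ with $w=\exp(S_Gf)$; since $w(A)/w(A')$ is uniformly bounded above and below by Bow$(X)$-regularity, positive mass on $[A]\times[B]$ forces positive mass on $[A']\times[B']$, contradicting the second step. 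The counting argument of Theorem~\ref{bound} then transfers verbatim: more than $c_\pi$ ergodic measures of maximal pressure in the fibre would, in the relatively independent joining, force positive mass on some $\{u\sim v\}$ set, contradicting this third step.

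The principal obstacle is the first step: properly encoding the Gibbs specification in the Burton--Steif swap and showing that the resulting measure $\tilde\gamma$ has relative pressure at least that of $\mu$. One must construct the analog of Lemma~\ref{lem:parts} with weighted counts replacing uniform ones, verifying that the cross-term $H_\gamma(\mathcal{A}|\sigma(X_{-\infty}^{-m-1}))$ and its entropy identities remain compatible with the $\int f$ corrections. The key technical tool is Walters' summable-variation condition, which guarantees that $S_nf(x)-S_nf(x')$ is uniformly bounded whenever $x,x'$ agree off a bounded set, making the Gibbs ratios well-defined and bounded; this is exactly the ingredient that enabled the original uniqueness result of Theorem~\ref{thm:equilib} in the non-relative setting, and it should suffice here to push the entropy proof through at the level of pressure.
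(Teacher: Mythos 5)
The statement you are addressing is not proved anywhere in the paper: it appears in the closing section as an explicitly open question, and the authors offer only the conjecture itself. So there is no proof of record to compare against, and the only issue is whether your sketch closes the problem. It does not. Your architecture is the natural one --- upgrade the uniform conditional distribution of Theorem~\ref{CUniformDist} to a relative Gibbs property, then rerun Theorems~\ref{orthogonal}, \ref{equivagonal} and \ref{bound} with weights --- but each genuinely new step is left as a promissory note, and the first one, which you yourself flag as ``the principal obstacle,'' is exactly where the open problem lives.

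Concretely: the Burton--Steif swap compares $\gamma$ and $\mu$ through the entropy chain \eqref{eq:maxentropy}; for pressure you must additionally control $\int f\,d\tilde\gamma-\int f\,d\mu$. The map $\Phi$ resamples the window $G_n$ in \emph{every} translate $R_n$, so $\Phi(x,\zeta)$ differs from $x$ on a syndetic set of coordinates, and the quantity to be bounded is a two-sided sum of the form $\sum_{i\in\Z}\bigl(f(T^i\Phi(x,\zeta))-f(T^ix)\bigr)$. The hypothesis $f\in\text{Bow}(X)$ only bounds $\mathrm{var}_n(S_nf)$, i.e.\ differences of one-sided Birkhoff sums of points agreeing on an initial block; it does not give the summable control over coordinate changes in a finite window that your ``uniformly bounded error'' claim requires (Bow$(X)$ is strictly larger than the summable-variation class, where such tail estimates are available). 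Without this, the asserted Gibbs form of the conditional distribution, the weighted analogue of Lemma~\ref{lem:parts}(b), and the bounded ratios $w(A)/w(A')$ in your third step are all unsupported. Your second step --- the pressure analogue of Theorem~\ref{orthogonal} --- is likewise asserted to ``go through'' but is itself an unproved statement of difficulty comparable to the main theorem of \cite{pqs}. What you have written is a plausible research plan, not a proof; the authors state the result as a conjecture precisely because these steps are not known.
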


\end{document}